\theoremstyle{plain}
\newtheorem{theorem}{\bf Theorem}[section]
\newtheorem{lemma}[theorem]{\bf Lemma}
\newtheorem{proposition}[theorem]{\bf Proposition}
\theoremstyle{definition}
\newtheorem{definition}[theorem]{\bf Definition}
\newtheorem{remark}[theorem]{\bf Remark}
\newcommand{\eqa}[1]{
\begin{align*}
#1
\end{align*}}
\newcommand{\nai}[2]{\langle #1,#2\rangle}
\newcommand{\mattwo}[4]{\begin{pmatrix}#1 & #2\\ #3 & #4\end{pmatrix}}
\title{On the uniqueness of injective III$_1$ factor}
\date{May 1985}
\author{\large Uffe Haagerup}
\begin{document}
\maketitle

\begin{abstract}
We give a new proof of a theorem due to Alain Connes, that an injective factor $N$ of type III$_1$ with separable predual and with trivial bicentralizer is isomorphic to the Araki--Woods type III$_1$ factor $R_{\infty}$. 
This, combined with the author's solution to the bicentralizer problem for injective III$_1$ factors provides a new proof of the theorem that up to $*$-isomorphism, there exists a unique injective factor of type III$_1$ on a separable Hilbert space. 
\end{abstract}

\noindent

\medskip
\begin{center}
\large Preamble by Alain Connes \normalsize
\end{center}
\begin{quotation}
Uffe Haagerup solved the hardest problem of the classification of factors, 
 namely the uniqueness problem for injective factors of type III$_1$.
 The present paper, taken from his unpublished notes, presents a direct proof 
 of this uniqueness by showing that any injective factor of type III$_1$ is an infinite
 tensor product of type I factors so that the uniqueness follows from the Araki--Woods
 classification. The proof is typical of Uffe's genius, the attack is direct, and combines 
 his amazing control of completely positive maps and his sheer analytical power,
 together with his  solution to the bicentralizer problem.
 After his tragic death, Hiroshi Ando volunteered to type the manuscript\footnote{
The manuscript is typed by Hiroshi Ando (Chiba University) in cooperation with Cyril Houdayer (Universit\'e
Paris-Sud), Toshihiko Masuda (Kyushu University), Reiji Tomatsu (Hokkaido University), Yoshimichi Ueda
(Kyushu University) and Wojciech Szymanski (University of Southern Denmark).  
}.
 Some pages were missing from the notes, but eventually Cyril Houdayer 
 and Reiji Tomatsu suggested a missing proof of Lemma \ref{lem: UH3.4} and Theorem \ref{thm: UH3.1}.
 We heartily thank  Hiroshi, Cyril and Reiji for making the manuscript available to the community.
  We also thank S\o ren Haagerup for giving permission to publish his father's paper.
\end{quotation}
\tableofcontents
\section{Introduction}
The problem, whether all injective factors of type III$_1$ on a separable Hilbert space are isomorphic, has been settled affirmatively. 
The proof of the uniqueness of injective III$_1$ factors falls in two parts, namely (see $\S$\ref{subsec: bicentralizers} for the definition of the bicentralizer): 
\begin{theorem}[{\cite{Connes85}}]\label{thm: A}
Let $M$ be an injective factor of type {\rm{III}}$_1$ on a separable Hilbert space, such that the bicentralizer $B_{\varphi}$ is trivial (i.e., $B_{\varphi}=\mathbb{C}1$) for some normal faithful state $\varphi$ on $M$, then $M$ is $*$-isomorphic to the Araki--Woods factor $R_{\infty}$. 
\end{theorem}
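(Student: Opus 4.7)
The plan is to apply the Araki--Woods classification: $R_\infty$ is characterized among injective type III$_1$ factors with separable predual as the unique one admitting an ITPFI structure, i.e.\ a presentation as an infinite tensor product $\bigotimes_{k\ge 1}(M_{n_k}(\mathbb{C}),\omega_k)$ of matrix factors with product state whose associated Connes invariant satisfies $S(M)=[0,\infty)$. Accordingly, I would aim to build inside $M$ an increasing tower of mutually commuting finite-dimensional subfactors $F_1,F_2,\ldots$ such that $\varphi$ is asymptotically a product state on $F_1\otimes F_2\otimes\cdots$, with the eigenvalue lists of $\varphi|_{F_k}$ chosen to force type III$_1$, and such that $\bigvee_k F_k = M$.

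The construction is inductive. Suppose $N_k=F_1\otimes\cdots\otimes F_k$ has been constructed, together with a conditional expectation $E_k:M\to N_k'\cap M$ satisfying $\varphi=\varphi|_{N_k}\otimes(\varphi\circ E_k|_{N_k'\cap M})$ up to a controlled error. Given a finite set $S\subset M$, an error $\varepsilon>0$, and a prescribed eigenvalue list for the next block, I would first use injectivity, via semidiscreteness, to produce inside $N_k'\cap M$ a finite-dimensional subalgebra over which the elements of $S$ are well approximated in the $\|\cdot\|_\varphi^\#$-norm. Then the triviality of $B_\varphi$ would be used to perform an averaging over the centralizer $M_\varphi$, upgrading these approximate matrix units to genuine matrix units $(e_{ij}^{(k+1)})$ of a subfactor $F_{k+1}\cong M_{n_{k+1}}(\mathbb{C})$ on which $\varphi$ has the prescribed eigenvalues and which commutes with $N_k$. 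Iterating with $\varepsilon_k\to 0$ along a $\sigma$-weakly dense sequence yields the ITPFI tower; the Araki--Woods classification then identifies $M$ with $R_\infty$.

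The main obstacle is precisely this upgrade from the ``soft'' approximants supplied by injectivity to a genuine product-state matrix subfactor with freely specified spectrum. Semidiscreteness alone only provides finite-dimensional approximation in norm; the approximating algebras are neither subfactors, nor do they split $\varphi$ as a product, nor do they come with any prescribed eigenvalue pattern. The role of $B_\varphi=\mathbb{C}1$ is to supply, via an averaging principle inside $M_\varphi$, enough centralizing mass to kill the deviation between $\varphi$ and the target product state on the new block. Concretely, the technical heart is a lemma asserting (in a quantitative form) that a normal state close to $\varphi$ can be pulled onto $\varphi$ on a prescribed finite-dimensional subalgebra by a small averaging of inner automorphisms $\mathrm{Ad}(u)$ with $u\in\mathcal{U}(M_\varphi)$. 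This is exactly the content that triviality of the bicentralizer encodes, and once it is available the perturbation of approximate matrix units to exact ones with correct state values, and the propagation of estimates through the induction without accumulating error, become delicate but tractable.
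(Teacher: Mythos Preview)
Your overall target---verify the Connes--Woods ITPFI criterion and invoke Araki--Woods---is the same as the paper's. But the mechanism you propose for the ``upgrade'' step has a genuine gap, and it is not a minor one.

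First, the averaging you invoke is misplaced. For a type III$_1$ factor the centralizer $M_\varphi$ of a generic faithful normal state can be trivial (equal to $\mathbb{C}1$), so ``averaging over $\mathcal{U}(M_\varphi)$'' has no force. The triviality of the bicentralizer is a statement about the \emph{asymptotic} centralizer: it says that for every $a\in M$ and every $\delta>0$ the scalar $\varphi(a)1$ lies in the $\sigma$-weak closed convex hull of $\{u^*au : u\in\mathcal{U}(M),\ \|u\varphi-\varphi u\|\le\delta\}$. This lets you average an \emph{operator} down to a scalar (or, relative to a finite-dimensional subfactor $F$, down to $\varepsilon_{F,\varphi}(a)$) using unitaries that only approximately commute with $\varphi$; it does not give a mechanism to ``pull a nearby state onto $\varphi$ on a prescribed subalgebra'' as you describe.

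Second, and more seriously, semidiscreteness by itself does not produce anything close to a subfactor on which $\varphi$ splits with prescribed eigenvalues, and there is no perturbation lemma of the kind you sketch. The paper takes a completely different route to manufacture the finite-dimensional subfactor $F$ and the unitary conjugation. It first passes to the continuous crossed product $N\rtimes_{\sigma^\varphi}\mathbb{R}$ (a hyperfinite II$_\infty$ factor) and uses a corner of it to build unital completely positive maps $T\colon M_m(\mathbb{C})\to N$ that approximately intertwine the modular groups and carry matrix unitaries close to given $u_1,\dots,u_n$. It then introduces \emph{$\mathbb{Q}$-stable} states (shown to exist on any III$_1$ factor): these guarantee, for any rational eigenvalue list, an exact finite-dimensional subfactor $F$ with $\varphi=\varphi|_F\otimes\varphi|_{F^{\mathrm c}}$. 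The bicentralizer hypothesis enters only at one precise point: to replace a condition of the form $\varepsilon_{F,\varphi}(\sum a_ia_i^*)\approx 1$ by $\sum a_ia_i^*\approx 1$ via the averaging above. Finally, a Hilbert $N$-bimodule theorem converts the resulting ``$\delta$-relatedness'' of the tuples $(u_k\xi_\varphi)$ and $(v_k\xi_\varphi)$ into a single unitary $w$ with $\|w\xi_\varphi-\xi_\varphi w\|$ small and $w^*v_kw$ close to $u_k$; the subfactor $w^*Fw$ then witnesses the Connes--Woods product condition. None of these ingredients---the crossed-product construction of $T$, $\mathbb{Q}$-stable states, or the bimodule almost-unitary-equivalence theorem---appear in your outline, and each does real work that your ``perturb approximate matrix units'' step cannot replace.
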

\begin{theorem}[{\cite{Haagerup87}}]\label{thm: B}
For any normal faithful state $\varphi$ on an injective factor $M$ of type {\rm{III}}$_1$ on a separable Hilbert space, one has $B_{\varphi}=\mathbb{C}1$. 
\end{theorem}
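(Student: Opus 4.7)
\textbf{Proof proposal for Theorem \ref{thm: B}.} I would follow the strategy of Haagerup's paper \cite{Haagerup87}. Recall that
\[
B_\varphi = \{x\in M : \|[x,a_n]\|_\varphi^\sharp \to 0 \text{ for every bounded } (a_n)\subset M \text{ with } \|[\varphi,a_n]\|_{M_*}\to 0\}.
\]
This is a unital $\sigma^\varphi$-invariant von Neumann subalgebra of the centralizer $M_\varphi$, so by spectral decomposition it suffices to show $B_\varphi$ contains no projection $e$ with $0<\varphi(e)<1$. Equivalently, for every such $e$ one must produce a bounded sequence of unitaries $(u_n)\subset M$ with $\|[\varphi,u_n]\|_{M_*}\to 0$ but $\liminf_n\|[u_n,e]\|_\varphi^\sharp > 0$. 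This is the reformulation I would work with, after passing to the ultrapower $M^\omega$ to phrase everything in terms of the $\varphi^\omega$-centralizer $(M^\omega)_{\varphi^\omega}$ and its relative commutant in $M$.

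The construction rests on two inputs. First, injectivity of $M$: by Connes' theorem, $M$ is semidiscrete, giving normal u.c.p.\ factorizations $\mathrm{id}_M \approx T_k\circ S_k$ with $S_k\colon M\to F_k$, $T_k\colon F_k\to M$, $F_k$ finite-dimensional, and one may arrange (after an averaging trick using the modular group and a conditional expectation onto $M_\varphi^\omega$) that $\varphi$ is approximately preserved through the factorization. Second, the type III$_1$ condition: the modular spectrum equals $[0,\infty)$, so eigenoperators of $\sigma^\varphi$ with eigenvalue arbitrarily close to but different from $1$ exist, providing partial isometries that transport spectral mass of $e$ non-trivially. On the matrix level, Dixmier's averaging theorem supplies unitaries $v_k\in F_k$ whose conjugation shifts $S_k(e)$ by a definite amount in the tracial norm of $F_k$; pulling back via $T_k$ and polar-decomposing yields the candidate unitaries $u_k\in M$.

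The principal obstacle is controlling the error introduced by $T_k\circ S_k \ne \mathrm{id}_M$. One needs the $u_k$ to be simultaneously (i) approximately $\varphi$-preserving, (ii) satisfy $\|[u_k,e]\|_\varphi^\sharp \geq \delta$ uniformly in $k$, and (iii) robust against the non-multiplicativity of $T_k$. This calls for quantitative Powers--St\o rmer-type inequalities linking $L^1(M)$-perturbations of $\varphi$ to $L^2(M,\varphi)$-commutator norms, together with a delicate selection of the matrix-level unitaries $v_k$ adapted both to the spectral data of $S_k(e)$ and to the type III$_1$ spreading provided by $\sigma^\varphi$. The genuine difficulty is that semidiscreteness yields only pointwise ultraweak convergence of $T_k\circ S_k$ to the identity, whereas the bicentralizer obstruction demands asymptotic control in the stronger $\|\cdot\|_\varphi^\sharp$-geometry; bridging this gap is precisely where Haagerup's analytic mastery of completely positive maps enters, and I expect this step to be the technical crux of the argument.
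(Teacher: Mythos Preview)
The paper does not contain a proof of Theorem~\ref{thm: B}. It is quoted from \cite{Haagerup87} and used as a black box: the only place the triviality of $B_\varphi$ enters the present paper is in the proof of Lemma~\ref{lem: UH5.4}, where it is invoked through Proposition~\ref{prop: bicentralizer condition}. There is therefore no ``paper's own proof'' to compare your proposal against.

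As a sketch of the argument in \cite{Haagerup87}, your outline is recognizable at the level of inputs (injectivity via completely positive factorizations, type~III$_1$ via the modular spectrum, passage to an ultrapower), but the mechanism you describe---Dixmier-averaging $S_k(e)$ inside $F_k$, pulling back the resulting unitary through $T_k$, and polar-decomposing---is not how \cite{Haagerup87} actually proceeds, and as stated it has a genuine gap. The map $T_k$ is merely completely positive, so $T_k(v_k)$ need not be close to a unitary at all, and polar decomposition gives no control on $\|[\varphi,u_k]\|$; you acknowledge this as ``the technical crux,'' but nothing in your sketch indicates how to close it. The proof in \cite{Haagerup87} does not attempt to manufacture the witnessing unitaries this way. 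Instead it works with the equivalent convex-hull formulation (Proposition~\ref{prop: bicentralizer convex combination} here), reduces via crossed products to a question about semifinite injective factors and trace-scaling automorphisms, and ultimately exploits the Schwartz property~$P$/hypertrace characterization of injectivity together with a delicate analysis of how normal states can be approximated by states majorized by a fixed one. The hard analytic work is on the state side rather than the unitary side.
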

In this paper we give an alternative proof of Theorem \ref{thm: A} above, based on the technique of our simplified proof \cite{Haagerup85} of Connes' Theorem \cite{Connes76} ``injective$\Rightarrow$hyperfinite" in the type II$_1$ case\footnote{Typewriter's note: Haagerup used this technique to give a new proof of the uniqueness of injective type III$_{\lambda}\ (0<\lambda<1)$ factor. This result has been published in \cite{Haagerup89}.}.  
The key steps in our proof of Theorem \ref{thm: A} are listed below:\\ \\
\underline{\textbf{Step 1}}\\
By use of continuous crossed products, we prove that the identity map on an injective factor $N$ of type III$_1$ has an approximate factorization
\[
\xymatrix{
&R\ar[rd]^{T_{\lambda}}&\\
N\ar[rr]_{\text{id}_N}\ar[ru]^{S_{\lambda}}&&N
}
\]
through the hyperfinite factor $R$ of type II$_1$, such that $(S_{\lambda})_{\lambda\in \Lambda}$ and $(T_{\lambda})_{\lambda\in \Lambda}$ are nets of normal unital completely positive maps, and for a fixed normal faithful state $\varphi$ on $N$ (chosen prior to $S_{\lambda}$ and $T_{\lambda}$), there exist normal fatihful states $(\psi_{\lambda})_{\lambda\in \Lambda}$ on $R$, such that for all $t\in \mathbb{R}$ and $\lambda \in \Lambda$,  
\begin{align*}
\varphi\circ T_{\lambda}=\psi_{\lambda}&,\ \  \psi_{\lambda}\circ S_{\lambda}=\varphi,\\
\sigma_t^{\varphi}\circ T_{\lambda}&=T_{\lambda}\circ \sigma_t^{\psi_{\lambda}},\\
\sigma_t^{\psi_{\lambda}}\circ S_{\lambda}&=S_{\lambda}\circ \sigma_t^{\varphi},
\end{align*}
and $\|S_{\lambda}\circ T_{\lambda}(x)-x\|_{\varphi}\stackrel{\lambda \to \infty}{\to}0$ for all $x\in N$, where $\|y\|_{\varphi}:=\varphi(y^*y)^{\frac{1}{2}}\ (y\in N)$.\\ \\ 
\underline{\textbf{Step 2}}\\ From Step 1, we deduce that a certain normal faithful state $\varphi$ ($\mathbb{Q}$-stable state defined in $\S$\ref{section: Q-stable state}) on an injective factor $N$ of type III$_1$ has the following property: for any finite set of unitaries $u_1,\dots,u_n$ in $N$ and for every $\gamma,\delta>0$, there exists a finite-dimensional subfactor $F$ of $N$ such that 
\[\varphi=\varphi|_F\otimes \varphi|_{F^\text{c}},\]
and such that there exist unitaries $v_1,\dots,v_n$ in $F$ and a unital completely positive map $T\colon F\to N$ such that 
\begin{align*}
\varphi\circ T=\varphi,\\
\|\sigma_t^{\varphi}\circ T-T\circ \sigma_t^{\varphi|_F}\|&\le \gamma |t|,\ \ \ \ t\in \mathbb{R},
\end{align*} 
and 
\[\|T(v_k)-v_k\|_{\varphi}<\delta,\ \ \ k=1,\dots,n.\]
\ \\  
\underline{\textbf{Step 3}}\\
We prove that if $N,\varphi,F,u_1,\dots,u_n,v_1,\ldots,v_n$ are as in Step 2, then for every $\sigma$-strong neighborhood $\mathcal{V}$ of $0$ in $N$, there exists a finite set of operators $a_1,\dots, a_p$ in $N$ such that 
\begin{list}{}{}
\item[(a)] $\displaystyle \sum_{i=1}^pa_i^*a_i\in 1+\mathcal{V}$\ \ \ and\ \ \ $\displaystyle \sum_{i=1}^pa_i^*a_i\le 1$,
\item[(b)] $\displaystyle \varepsilon_{F,\varphi}\left (\sum_{i=1}^pa_ia_i^*\right )\in 1+\mathcal{V}$\ \ \ and\ \ \ $\displaystyle \varepsilon_{F,\varphi}\left (\sum_{i=1}^pa_ia_i^*\right )\le 1$,
\item[(c)] $\displaystyle \sum_{i=1}^p\|a_i\xi_{\varphi}-\xi_{\varphi}a_i\|^2<\delta',$
\item[(d)] $\displaystyle \sum_{i=1}^p\|a_iu_k-v_ka_i\|_{\varphi}^2<\delta',\ \ \ \ k=1,\ldots,n$. 
\end{list} 
Here $\xi_\varphi$ denotes the unique representing vector of $\varphi$ in a natural cone. 
The above $\delta'>0$ depends on $\gamma$ and $\delta$ in Step 2, and $\delta'$ is small when $\gamma$ and $\delta$ are small. Here, $\varepsilon_{F,\varphi}$ is the $\varphi$-invariant conditional expectation of $N$ onto $F$. Moreover in (c), the standard Hilbert space $H$ of $N$ is regarded as a Hilbert $N$-bimodule, by putting 
$\eta a:=Ja^*J\eta\ (a\in N,\ \eta\in H)$.\\

Assume now that the bicentralizer of any normal faithful state on $N$ is trivial. Then by an averaging argument, we can exchange (b) by
\[\text{(b')}\ \ \ \ \ \sum_{i=1}^pa_ia_i^*\in 1+\mathcal{V}\text{\ \ \ and\ \ \ }\sum_{i=1}^pa_ia_i^*\le 1.\] 
\ \\ \\
\underline{\textbf{Step 4}}\\
From (a), (b'), (c) and (d) above, we derive that there exists a unitary operator $w\in N$ such that 
$$\|w\xi_{\varphi}-\xi_{\varphi}w\|<\varepsilon$$
and
$$\|wu_k-v_kw\|_{\varphi}<\varepsilon,\ \ \ \ \ k=1,\ldots,n,$$
where $\varepsilon$ is small when $\delta'$ is small and $\mathcal{V}$ is a small $\sigma$-strong neighborhood of $0$ in $N$. The key part of Step 4 is a theorem about general Hilbert $N$-bimodules, which was proved in \cite{Haagerup89}.   
\\ \\
\underline{\textbf{Step 5}}\\
From Step 4, we get that for every finite set of unitaries $u_1,\ldots,u_n\in N$ and every $\varepsilon>0$, there exists a finite dimensional subfactor $F_1$ (namely $w^*Fw$) of $N$ and $n$ unitaries $v_1',\ldots,v_n'$ in $F_1$ (namely $w^*v_kw,\ k=1,\ldots,n$), such that 
$$\text{(i)}\ \ \ \ \|v_k'-v_k\|_{\varphi}<\varepsilon$$
and
$$\text{(ii)}\ \ \ \ \|\varphi-\varphi|_{F_1}\otimes \varphi|_{F_1^{\text{c}}}\|<\varepsilon.$$
The last inequality follows from the fact, that when $w$ almost commutes with $\xi_{\varphi}$, it almost commutes with $\varphi$ too. 
The properties (i) and (ii) above show that $\varphi$ satisfies the product condition of Connes--Woods \cite{ConnesWoods85} and thus $N$ is an ITPFI factor. 
But it is well-known that $R_{\infty}$ is the only ITPFI factor of type III$_1$ (cf. \cite{ArakiWoods68} and \cite{Connes73}).  
\section{Preliminaries} 
\subsection{Notation}
We use $M,N,\dots$ to denote von Neumann algebras and $\xi,\eta,\ldots$ to denote vectors in a Hilbert space. Let $M$ be a von Neumann algebra. $\mathcal{U}(M)$ denotes the unitary group of $M$. For a faithful normal state $\varphi$ on  $M$, we denote by $\Delta_{\varphi}$ (resp. $J_{\varphi}$) the modular operator (resp. modular conjugation operator) associated with $\varphi$, and the modular automorphism group of $\varphi$ is denoted by $\sigma^{\varphi}$. The norm $\|x\|_{\varphi}=\varphi(x^*x)^{\frac{1}{2}}$ defines the strong operator topology (SOT) on the unit ball of $M$. The centralizer of $\varphi$ is denoted by $M_{\varphi}$. 
\subsection{Connes--Woods' characterization of ITPFI factors}
Recall that a von Neumann algebra $M$ with separable predual is called {\it hyperfinite} if there exists an increasing sequence $M_1\subset M_2\subset \cdots$ of finite-dimensional *-subalgebras such that $M=(\bigcup_{n=1}^{\infty}M_n)''$. A factor $M$ is called an {\it Araki--Woods} factor or an {\it ITPFI} (infinite tensor product of factors of type I) factor, if it is isomorphic to the factor of the form $$\bigotimes_{i\in I}(M_i,\varphi_i),$$
where $I$ is a countable infinite set and each $M_i$ (resp. $\varphi_i$) is a $\sigma$-finite type I factor (resp. a faithful normal state). 
Araki and Woods classified most ITPFI factors: 
\begin{theorem}[{\cite{ArakiWoods68}}]
There exists a unique {\rm{ITPFI}} factor with separable predual for each type {\rm{I}}$_{\infty}$, ${\rm{II}}_1$, ${\rm{II}}_{\infty}$ and ${\rm{III}}_{\lambda}, \lambda \in (0,1]$. In particular, all {\rm{ITPFI}} factors of type {\rm{III}}$_1$ are isomorphic to 
$$R_{\infty}:=\bigotimes_{n\in \mathbb{N}}(M_3(\mathbb{C}), {\rm{Tr}}(\rho\ \cdot\ )),$$
where $\rho:=\frac{1}{1+\lambda+\mu}{\rm{diag}}(1,\lambda,\mu)$ and $0<\lambda,\mu$ satisfies $\frac{\log \lambda}{\log \mu}\notin \mathbb{Q}$. 
\end{theorem}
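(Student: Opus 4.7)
The plan is to invoke the classical Araki--Woods classification of ITPFI factors through the \emph{asymptotic ratio set} invariant. To any infinite tensor product $\bigotimes_{i\in I}(M_i,\varphi_i)$ one associates the set $r_\infty(M)\subset[0,\infty)$ of ``asymptotic ratios'' built from the eigenvalue lists of the density matrices implementing the $\varphi_i$. The first step is to show that $r_\infty(M)$ depends only on the isomorphism class of $M$ and not on the chosen ITPFI decomposition; this is done by reformulating $r_\infty$ intrinsically via the spectrum of the modular operator on sufficiently large ``approximate'' tensor-product subfactors, making it manifestly a $*$-isomorphism invariant.

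The second step is to enumerate the possible values of $r_\infty(M)$ and match them with the Murray--von Neumann/Connes type. One checks directly from the definition that $r_\infty(M)$ is a closed multiplicative subsemigroup of $[0,\infty)$ containing $1$, so the only possibilities are $\{1\}$, $\{0,1\}$, $\{0\}\cup\{\lambda^n:n\in\mathbb{Z}\}$ for $0<\lambda<1$, and $[0,\infty)$. A comparison with the Connes $S$-invariant (which for ITPFI factors is easily computed from the eigenvalue lists) then identifies the cases $\{1\}$ with type II, $\{0,1\}$ with type I$_\infty$ or III$_0$, the discrete case with type III$_\lambda$, and $r_\infty(M)=[0,\infty)$ with type III$_1$. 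A secondary invariant $\rho(M)$ disambiguates within each family as needed (e.g.\ II$_1$ vs.\ II$_\infty$, and the type III$_0$ flow of weights).

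The third and hardest step is the completeness statement: two ITPFI factors with the same $(r_\infty,\rho)$ are isomorphic. This is proved by a back-and-forth argument. Given ITPFI decompositions $M\cong\bigotimes(M_i,\varphi_i)$ and $M\cong\bigotimes(M_j',\varphi_j')$ with matching invariants, one exploits the abundance of approximately inner automorphisms of ITPFI factors to conjugate any finite partial product of one decomposition into a position compatible with a partial product of the other, while adjusting the product states by a controlled perturbation (whose admissibility is exactly what $r_\infty$ and $\rho$ encode). Iterating and passing to the limit yields the isomorphism.

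Finally, for the assertion about $R_\infty$, once the uniqueness is in hand it suffices to exhibit a single ITPFI factor of type III$_1$ with $r_\infty=[0,\infty)$. For $\rho=\frac{1}{1+\lambda+\mu}\mathrm{diag}(1,\lambda,\mu)$ with $\log\lambda/\log\mu\notin\mathbb{Q}$, the eigenvalue ratios at stage $n$ generate a subgroup of $\mathbb{R}_{>0}$ whose closure is all of $\mathbb{R}_{>0}$, and a direct computation shows this forces $r_\infty=[0,\infty)$. The main obstacle throughout is the completeness step, where the delicate point is producing the required approximate unitary conjugacies between partial tensor factorizations with prescribed modular data.
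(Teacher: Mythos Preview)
The paper does not prove this statement at all: it is stated in the preliminaries as a classical result of Araki and Woods, with citation \cite{ArakiWoods68}, and is used throughout only as a black box (most notably in the final paragraph of the proof of Theorem~\ref{thm: UH5.6}). There is therefore no ``paper's own proof'' to compare your proposal against.

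That said, your outline is a reasonable high-level summary of the original Araki--Woods strategy: introduce the asymptotic ratio set $r_\infty$ (together with the auxiliary invariant $\rho$), prove it is an isomorphism invariant of the factor rather than of the chosen ITPFI presentation, determine its possible values, and then establish completeness. A few remarks are in order. First, your correspondence between values of $r_\infty$ and types is slightly garbled: $r_\infty=\{0,1\}$ is the type~III$_0$ case, not type~I$_\infty$; and the theorem as stated deliberately omits type~III$_0$, where uniqueness fails (this is precisely the point of \cite{Connes72} and later \cite{ConnesWoods85}). Second, your description of the completeness step as a ``back-and-forth'' with approximate inner automorphisms is the right shape but hides all of the actual content; in Araki--Woods the argument proceeds via explicit manipulations of eigenvalue lists and a careful inductive construction of intertwining partial isometries between finite tensor products, and making this precise is the bulk of their paper. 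As a proof proposal for a result the paper only cites, your sketch is acceptable as an indication of where to look, but it is far from self-contained.
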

It is clear that an ITPFI factor with separable predual is hyperfinite. 
The converese is also true for factors not of type III$_0$, but false in general. 
Namely, Connes--Woods \cite{ConnesWoods85} characterized hyperfinite factors of type III$_0$ with separable predual which are  isomorphic to ITPFI factors
 by the approximate transitivity of their flow of weights, while the existence of hyperfinite factors of type III$_0$ with separable predual which are not isomorphic to ITPFI factors had been shown in \cite{Connes72}.
Let $N$ be a von Neumann algebra, and let $F$ be a finite dimensional subfactor of $N$ with relative commutant $F^{\text{c}}:=F'\cap N$ in $N$. Then it is elementary to check, that the map 
$$\sum_{i=1}^nx_i\otimes y_i\mapsto \sum_{i=1}^nx_iy_i,\ \ \ \ \ \ x_i\in F,\ y_i\in F^{\text{c}}\ (1\le i\le n)$$
is an isomorphism of $F\otimes F^{\text{c}}$ onto $N$. If $\omega_1$ is a normal state on $F$ and $\omega_2$ is a normal state on $F^{\text{c}}$, we let $\omega_1\otimes \omega_2$ denote the corresponding state on $N$, i.e., 
\[(\omega_1\otimes \omega_2)(xy)=\omega_1(x)\omega_2(y),\ \ \ \ \ x\in F,\ y\in F^{\text{c}}.\]
In our proof of 
$$[N \text{\ injective III}_1\text{\ and\ }B_{\varphi}=\mathbb{C}1]\Rightarrow N\cong R_{\infty},$$
we shall need the following criterion for a factor to be ITPFI: 
\begin{proposition}[{\cite[Lemma 7.6]{ConnesWoods85}}]\label{prop: CW}
Let $N$ be a factor on a separable Hilbert space. Then $N$ is {\rm{ITPFI}} if and only if $N$ admits a normal faithful state $\varphi$ with the following property: 
for every finite set $x_1,\ldots,x_n$ of operators in $N$, for every $\varepsilon>0$, and every strong* neighborhood $\mathcal{V}$ of $0$ in $N$, there exists a finite dimensional subfactor $F$ of $N$, such that 
$$x_k\in F+\mathcal{V},\ \ \ \ \ k=1,\ldots,n$$
and 
$$\|\varphi-\varphi|_F\otimes \varphi|_{F^{\text{c}}}\|<\varepsilon.$$  
\end{proposition}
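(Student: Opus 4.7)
The proposition has two directions, and the forward direction is straightforward while the converse carries most of the content.

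For the easy direction, suppose $N = \bigotimes_{i \in I}(M_i, \varphi_i)$ with $\varphi = \bigotimes_i \varphi_i$. For each finite $I_0 \subset I$, set $F_{I_0} := \bigotimes_{i \in I_0} M_i$ (after approximating each type I factor by finite-dimensional subfactors if some $M_i$ is of type I$_\infty$). Then $F_{I_0}$ is a finite-dimensional subfactor, $F_{I_0}^{\text{c}} \supset \bigotimes_{i \in I \setminus I_0}(M_i,\varphi_i)$, and $\varphi$ factors as $\varphi|_{F_{I_0}} \otimes \varphi|_{F_{I_0}^{\text{c}}}$ \emph{exactly}. The strong* density of $\bigcup_{I_0} F_{I_0}$ in $N$ (built into the definition of the infinite tensor product) lets me approximate any finite tuple $x_1,\dots,x_n$ modulo $\mathcal{V}$ by elements of some $F_{I_0}$.

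For the converse, I would construct inductively an increasing chain of finite-dimensional subfactors $N_1 \subset N_2 \subset \cdots$ of the form $N_k = F_1 \otimes \cdots \otimes F_k$ (as subalgebras of $N$), with mutually commuting $F_j$, such that $\varphi|_{N_k} = \varphi|_{F_1} \otimes \cdots \otimes \varphi|_{F_k}$ exactly, and a preassigned strong*-dense sequence $(x_n) \subset N$ is captured: $\mathrm{dist}(x_j, N_k) < 1/k$ for $j \le k$ in a given family of strong* seminorms. Summing tolerances $\varepsilon_k$ with $\sum \varepsilon_k < \infty$, the product state $\bigotimes_k \varphi|_{F_k}$ on $\bigotimes_k F_k$ will then agree with $\varphi$ on the strong*-dense *-subalgebra $\bigcup_k N_k$, yielding a spatial isomorphism $N \cong \bigotimes_k (F_k, \varphi|_{F_k})$, i.e., $N$ is ITPFI.

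The main obstacle is the inductive step: given $N_k$, the hypothesis provides a finite-dimensional subfactor $F' \subset N$ which captures the needed $x_j$'s and the matrix units of $N_k$ modulo a small strong* neighborhood, and satisfies $\|\varphi - \varphi|_{F'} \otimes \varphi|_{F'^{\text{c}}}\| < \varepsilon_{k+1}$, but \emph{a priori} $F'$ neither contains $N_k$ nor splits as $N_k \otimes F_{k+1}$. I would overcome this by a perturbation argument: since $F'$ almost contains the matrix units of $N_k$ in the $\|\cdot\|_{\varphi}$-norm, a standard lemma (in the spirit of Christensen's near-inclusion results, available in uniform form for finite-dimensional subfactors with controlled states) produces a unitary $u \in N$ close to $1$ in $\|\cdot\|_{\varphi}$ such that $u N_k u^* \subset F'$. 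Replacing $F'$ by $u^*F'u$ gives $N_k \subset F'$, and taking $F_{k+1} := N_k^{\text{c}} \cap F'$ (the relative commutant inside $F'$) yields a finite-dimensional subfactor with $F' = N_k \otimes F_{k+1}$. A final renormalization step (again modifying by a unitary close to $1$) replaces the approximate factorization of $\varphi$ by an exact one on $N_{k+1} := F' = N_k \otimes F_{k+1}$, with errors absorbed into $\varepsilon_{k+1}$. This delicate bookkeeping of perturbations, ensuring that cumulative errors remain summable while all earlier factorizations are preserved, is the technical heart of the proof.
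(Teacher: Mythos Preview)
The paper does not prove this proposition; it is quoted from \cite[Lemma~7.6]{ConnesWoods85} as a preliminary fact, and only the ``if'' direction is invoked later (in the proof of Theorem~\ref{thm: UH5.6}). So there is no proof in the paper against which to compare your outline.

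Your architecture is the right one, and the forward direction is fine. In the converse, however, the ``final renormalization step'' is a genuine gap as stated. You claim that by conjugating with a unitary close to $1$ you can pass from the approximate factorization $\|\varphi-\varphi|_{N_{k+1}}\otimes\varphi|_{N_{k+1}^{\text{c}}}\|<\varepsilon_{k+1}$ to an \emph{exact} one, and simultaneously that $\varphi|_{N_{k+1}}=\varphi|_{N_k}\otimes\varphi|_{F_{k+1}}$. But $\varphi$ is a fixed state on $N$: replacing $N_{k+1}$ by $u^*N_{k+1}u$ changes the restriction $\varphi|_{u^*N_{k+1}u}$, yet there is no mechanism forcing any conjugate of $N_{k+1}$ to be $\sigma^{\varphi}$-invariant, which by Takesaki's theorem is exactly what $\varphi=\varphi|_{N_{k+1}}\otimes\varphi|_{N_{k+1}^{\text{c}}}$ would require. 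Likewise, once $N_k\subset F'$ is arranged, the restriction $\varphi|_{F'}$ is determined and need not split as $\varphi|_{N_k}\otimes\varphi|_{F_{k+1}}$; no unitary in $N$ can repair this without moving $N_k$.

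The Connes--Woods argument avoids this trap: one does \emph{not} insist on exact factorization at any finite stage. Instead one builds the tower $N_1\subset N_2\subset\cdots$ with $\bigcup_k N_k$ strong*-dense and only the approximate conditions $\|\varphi-\varphi|_{N_k}\otimes\varphi|_{N_k^{\text{c}}}\|<\varepsilon_k$, $\sum_k\varepsilon_k<\infty$. The summability is then used to show that the states $\varphi|_{N_k}\otimes\varphi|_{N_k^{\text{c}}}$ converge to $\varphi$ and that the GNS completion of $\bigcup_k N_k$ with respect to $\varphi$ is canonically identified with the ITPFI factor $\bigotimes_k(F_k,\omega_k)$ for suitable states $\omega_k$ on $F_k:=N_{k-1}'\cap N_k$. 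The perturbation lemmas you cite (near-inclusion of finite-dimensional subfactors) are indeed needed, but only to achieve the nesting $N_k\subset N_{k+1}$, not to straighten $\varphi$.
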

\subsection{Bicentralizers on type III$_1$ factors}\label{subsec: bicentralizers}
In this subsection, we recall Connes' bicentralizers. 
Let $M$ be a $\sigma$-finite von Neumann algebra, and let $\varphi$ be a normal faithful state on $M$. 
We denote by ${\rm{AC}}(\varphi)$ the set of all norm-bounded sequences $(x_n)_{n=1}^{\infty}$ in $M$ such that $\lim_{n\to \infty}\|\varphi x_n-x_n\varphi\|=0$ holds. 
\begin{definition}[Connes] 
The {\it bicentralizer} of $\varphi$ is the set $B_{\varphi}$ of all $x\in M$ such that $\lim_{n\to \infty}\|xa_n-a_nx\|_{\varphi}=0$ holds for all $(a_n)_{n=1}^{\infty}\in {\rm{AC}}(\varphi)$. 
\end{definition}
Since $B_{\varphi}$ is a von Neumann subalgebra of $M$ \cite[Proposition 1.3]{Haagerup87}, it holds that $\lim_{n\to \infty}\|xa_n-a_nx\|_{\varphi}^{\sharp}=0$. 

It was conjectured by Connes that for all factors of type III$_1$ with separable predual, the bicentralizer $B_{\varphi}$ of any normal faithful state $\varphi$ on $M$ is trivial, i.e.,  $B_{\varphi}=\mathbb{C}1$ holds. This is still an open problem. 
We will need the following result on type III$_1$ factors, known as the Connes--St\o rmer transitivity:
\begin{theorem}[{\cite{ConnesStormer78}}] Let $M$ be a type {\rm{III}}$_1$ factor with separable predual. Then for every faithful normal states $\varphi,\psi$ on $M$ and $\varepsilon>0$, there exists a unitary $u\in \mathcal{U}(M)$ such that $\|u\varphi u^*-\psi\|<\varepsilon$ holds. 
\end{theorem}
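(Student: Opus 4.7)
The approach is to place $M$ in its standard form and use the Powers--Størmer inequality to reduce norm-approximation of states to vector approximation in the natural cone, where the III$_1$ hypothesis can be exploited through the continuous decomposition.

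\emph{Reduction to the natural cone.} Fix a standard form $(M, L^2(M), J, \mathcal{P}^\natural)$, and let $\xi_\omega \in \mathcal{P}^\natural$ denote the canonical vector representative of each normal faithful state $\omega$. For $u \in \mathcal{U}(M)$, the conjugate state $u\varphi u^*$ is represented by $(uJuJ)\xi_\varphi \in \mathcal{P}^\natural$, and since vector representatives of states are unit vectors, the Powers--Størmer inequality gives
\[
\|u\varphi u^* - \psi\| \le 2\,\|(uJuJ)\xi_\varphi - \xi_\psi\|.
\]
It thus suffices to approximate $\xi_\psi$ by elements of the orbit $\mathcal{O}_\varphi := \{(uJuJ)\xi_\varphi : u \in \mathcal{U}(M)\}$ in the Hilbert-space norm.

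\emph{Using the III$_1$ hypothesis.} To produce the approximating unitaries, I would pass to the continuous decomposition $M \cong N \rtimes_\theta \mathbb{R}$, where $N$ is a type II$_\infty$ factor and the scaling flow $\theta$ acts ergodically on $Z(N) = \mathbb{C}$ (this being the defining property of III$_1$). The relative modular operator $\Delta_{\psi,\varphi}$ encodes the spectral discrepancy between the representing vectors of $\varphi$ and $\psi$, with spectrum in $(0,\infty)$. Using the triviality of the flow of weights, one applies a Rohlin-type lemma for the modular automorphism group in the III$_1$ setting to produce unitaries $u_n \in \mathcal{U}(M)$ such that $(u_nJu_nJ)\xi_\varphi$ matches $\xi_\psi$ on progressively finer spectral subintervals of $\Delta_{\psi,\varphi}$; careful quantitative estimates ensure norm convergence to $\xi_\psi$.

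\emph{Main obstacle.} The difficulty lies not in the reduction, which is routine, but in the construction of the approximating unitaries with the required norm (rather than merely strong) control. Weak or strong density of $\mathcal{O}_\varphi$ would follow comparatively easily from Dixmier-type averaging inside the centralizers, but the norm density needed for Powers--Størmer is strictly stronger and demands the full spectral-theoretic content of the III$_1$ assumption. The Rohlin-type argument must realize arbitrary positive numbers as approximate modular eigenvalues of inner perturbations, which uses precisely that the modular spectrum fills all of $\mathbb{R}_+^*$; only in the III$_1$ case does inner conjugation have enough freedom to adjust the spectral data of natural-cone representing vectors without obstruction.
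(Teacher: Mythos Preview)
The paper does not give its own proof of this theorem: it is stated in the preliminaries with the attribution \cite{ConnesStormer78} and used as a black box (notably in the proof of Lemma~\ref{lem: UH4.4}). There is therefore no in-paper argument to compare your proposal against.

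That said, your sketch has a genuine gap at the decisive step. The reduction via Powers--St\o rmer to approximating $\xi_\psi$ by the $\mathcal{U}(M)$-orbit of $\xi_\varphi$ in $\mathcal{P}^\natural$ is correct and standard. But the sentence ``one applies a Rohlin-type lemma for the modular automorphism group in the III$_1$ setting to produce unitaries $u_n$ \ldots'' is not a proof; it names a wish, not a mechanism. Passing to the continuous decomposition $N\rtimes_\theta\mathbb{R}$ does not by itself hand you inner unitaries of $M$ that move $\xi_\varphi$ toward $\xi_\psi$, and there is no off-the-shelf Rohlin lemma for $\sigma^\varphi$ on a III$_1$ factor that yields the quantitative spectral matching you describe. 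The actual Connes--St\o rmer argument is more concrete: one first perturbs $\varphi$ and $\psi$ to nearby states whose relative density has finite (indeed rational) spectrum, and then, using that $S(M)=[0,\infty)$ for a III$_1$ factor, one finds partial isometries in $M$ implementing the required eigenvalue changes (this is where the III$_1$ hypothesis enters, via Connes' spectral characterization rather than through the flow of weights). Assembling these partial isometries gives the approximating unitary. Your proposal correctly identifies \emph{where} the III$_1$ assumption must be used and \emph{what} it must accomplish, but supplies no construction; if you want to turn it into a proof you should either follow the finite-spectrum reduction of \cite{ConnesStormer78} or make precise exactly which Rohlin-type statement you are invoking and why it applies here.
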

Connes showed that by the Connes-St\o rmer transitivity, for a type III$_1$ factor $M$ with separable predual, the triviality of $B_{\varphi}$ for one fixed faithful normal state $\varphi$ on $M$ implies the triviality of $B_{\psi}$ for every faithful normal state $\psi$ (see \cite[Corollary 1.5]{Haagerup87} for the proof).  He also showed that the triviality of the bicentralizer is equivalent to the following property (the proof is given in \cite[Proposition 1.3 (2)]{Haagerup87}): 
\begin{proposition}[Connes]\label{prop: bicentralizer convex combination}Let $M$ be a von Neumann algebra with a normal faithful state $\varphi$. Then $B_{\varphi}=\mathbb{C}1$ holds, if and only if the following condition is satisfied: for every $a\in M$ and $\delta>0$, 
\begin{equation*}
\overline{\rm{conv}}\{u^*au; u\in \mathcal{U}(M),\ \|u\varphi-\varphi u\|\le \delta\}\cap \mathbb{C}1\neq \emptyset,
\end{equation*}
where $\overline{\rm{conv}}$ is the closure of the convex hull in the $\sigma$-weak topology. 
\end{proposition}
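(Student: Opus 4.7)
I would prove the two implications separately; they are of very different character.

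\emph{(Sufficiency.)} Assume the averaging condition and fix $x \in B_\varphi$, which I may take to be self-adjoint since $B_\varphi$ is a $*$-algebra. The first step is a uniform-in-$\delta$ version of the defining property of $B_\varphi$: for every $\varepsilon > 0$ there exists $\delta_0 > 0$ such that $\|u\varphi - \varphi u\| \le \delta_0$ implies $\|xu-ux\|_\varphi \le \varepsilon$ for every $u\in\mathcal{U}(M)$. Indeed, if this failed, a diagonal extraction would produce a sequence in $\mathrm{AC}(\varphi)$ contradicting $x\in B_\varphi$. A direct computation gives $\|u^*xu - x\|_\varphi = \|xu - ux\|_\varphi$, so every convex combination $\sum_i \lambda_i u_i^* x u_i$ with $\|u_i\varphi - \varphi u_i\|\le\delta_0$ lies in the $\|\cdot\|_\varphi$-ball of radius $\varepsilon$ around $x$; since $y\mapsto\|(y-x)\xi_\varphi\|$ is weak-operator lower semi-continuous, and the $\sigma$-weak topology agrees with WOT on bounded sets, the same holds for the $\sigma$-weak closure. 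Applying the averaging hypothesis to $a=x,\ \delta=\delta_0$ then produces a scalar $c\in\mathbb{C}$ with $\|x - c\cdot 1\|_\varphi \le \varepsilon$, and letting $\varepsilon\to 0$ forces $x$ to be a scalar.

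\emph{(Necessity.)} Suppose $B_\varphi = \mathbb{C}1$, fix $a\in M$ and $\delta > 0$, and let $K$ denote the $\sigma$-weakly closed convex hull of $\{u^*au : u\in\mathcal{U}(M),\ \|u\varphi - \varphi u\|\le\delta\}$. I would use a Dixmier-type minimization: since $K$ is $\sigma$-weakly compact and the symmetric norm $\|\cdot\|_\varphi^\sharp$ is $\sigma$-weakly lower semi-continuous on norm-bounded sets, it attains its infimum on $K$ at some $y_0$. I claim $y_0 \in \mathbb{C}1$. If not, triviality of $B_\varphi$ supplies a sequence of unitaries $(v_n) \in \mathrm{AC}(\varphi)$ along which $\|y_0 v_n - v_n y_0\|_\varphi^\sharp$ is bounded away from $0$. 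With appropriate $\delta$-bookkeeping (building $K$ at scale $\delta/2$ and reserving the remaining $\delta/2$ margin for the further conjugation, using $\|uv\varphi-\varphi uv\|\le\|u\varphi-\varphi u\|+\|v\varphi-\varphi v\|$), the averages $\tfrac12(y_0 + v_n^* y_0 v_n)$ stay inside $K$ for $n$ large; the parallelogram identity in the Hilbert space defining $\|\cdot\|_\varphi^\sharp$, together with the fact that conjugation by an asymptotically $\varphi$-central unitary asymptotically preserves this norm, then gives $\|\tfrac12(y_0+v_n^*y_0v_n)\|_\varphi^\sharp < \|y_0\|_\varphi^\sharp$ for $n$ large, contradicting minimality.

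The sufficiency direction is essentially a formal consequence of the definition of $B_\varphi$ combined with a standard $\sigma$-weak closure observation. The main obstacle is the necessity direction, where one must simultaneously control how the defect $\|u\varphi - \varphi u\|$ grows under products of unitaries so that the averaged element stays in $K$, and extract a strict decrease in $\|\cdot\|_\varphi^\sharp$ despite conjugation by $v_n$ only approximately preserving this norm. A cleaner route that avoids this bookkeeping is to pass to the Ocneanu ultrapower $M^{\mathrm{u}}$ of $M$ with the lifted state $\varphi^{\mathrm{u}}$: the condition $B_\varphi=\mathbb{C}1$ translates into triviality of a concrete relative commutant in $M^{\mathrm{u}}$, and then Dixmier's averaging theorem applies directly inside the centralizer $(M^{\mathrm{u}})_{\varphi^{\mathrm{u}}}$, where conjugation genuinely preserves the $L^2$-norm; the desired scalar in $K$ is produced by a compactness argument returning to $M$.
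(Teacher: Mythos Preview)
The paper does not give its own proof of this proposition; it cites \cite[Proposition~1.3(2)]{Haagerup87}. Your sufficiency direction is correct and matches the standard argument.

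Your necessity direction has a real gap in the $\delta$-bookkeeping. You propose to minimise $\|\cdot\|_\varphi^\sharp$ over $K_{\delta/2}=\overline{\mathrm{conv}}\{u^*au:\|u\varphi-\varphi u\|\le\delta/2\}$, obtain a minimiser $y_0$, and then, if $y_0\notin\mathbb{C}1$, produce $v_n$ with $\|v_n\varphi-\varphi v_n\|\to 0$ so that $\tfrac12(y_0+v_n^*y_0v_n)$ has strictly smaller norm. But this averaged element lies only in $K_{\delta/2+\varepsilon_n}\subset K_\delta$, not in $K_{\delta/2}$; since $y_0$ was minimal only over the \emph{smaller} set $K_{\delta/2}$, there is no contradiction. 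No finite splitting of the budget repairs this, because each further conjugation by an element of $\mathrm{AC}(\varphi)$ consumes a positive, if small, amount of room.

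The clean fix (and this is exactly Haagerup's argument in \cite{Haagerup87}, which the paper also uses in the proof of the closely related Proposition~\ref{prop: bicentralizer condition}) is to minimise not over a fixed $K_\delta$ but over the intersection
\[
C_\varphi(a):=\bigcap_{\delta>0}\overline{\mathrm{conv}}\{u^*au:\|u\varphi-\varphi u\|\le\delta\}.
\]
This set is nonempty (it contains $a$), $\sigma$-weakly compact, and its image in $L^2(M,\varphi)$ is norm-closed and convex, so there is a unique $b\in C_\varphi(a)$ of minimal $\|\cdot\|_\varphi$. The crucial point is that for any $b\in C_\varphi(a)$ one has $C_\varphi(b)\subset C_\varphi(a)$: if $c\in C_\varphi(b,\delta')$ and $b\in C_\varphi(a,\delta)$ then $c\in C_\varphi(a,\delta+\delta')$, and intersecting over all $\delta,\delta'>0$ gives the inclusion. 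Since conjugation by unitaries with defect $\le\delta$ changes $\|\cdot\|_\varphi$ by at most $O(\delta)$, every element of $C_\varphi(b)$ has $\|\cdot\|_\varphi\le\|b\|_\varphi$; combined with $C_\varphi(b)\subset C_\varphi(a)$ and uniqueness of the Hilbert-space minimiser, this forces $C_\varphi(b)=\{b\}$. One then checks (this is \cite[Lemma~1.2]{Haagerup87}) that $C_\varphi(b)=\{b\}$ is equivalent to $b\in B_\varphi=\mathbb{C}1$, and this scalar lies in every $K_\delta$.

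Your ultrapower alternative can be made to work, but it imports substantially more machinery than the two-line intersection trick above.
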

We will use the following variant of Proposition \ref{prop: bicentralizer convex combination}. 
\begin{proposition}\label{prop: bicentralizer condition}
Let $M$ be a type {\rm{III}}$_1$ factor with separable predual, and let $\varphi$ be a normal faithful state on $M$ whose modular automorphism group $\sigma^{\varphi}$ leaves a finite-dimensional subfactor $F$ globally invariant. Let $\varepsilon_{F,\varphi}\colon M\to F$ be the normal faithful $\varphi$-preserving conditional expectation. 
Assume that $B_{\varphi}=\mathbb{C}1$. Then for every $\delta>0$ and $a\in M$, we have 
\begin{equation}
\varepsilon_{F,\varphi}(a)\in \overline{\rm{conv}}\{u^*au;\ u\in \mathcal{U}(F^{\text{c}}),\ \|u\xi_{\varphi}-\xi_{\varphi}u\|\le \delta\}.\label{eq: relative Scwarz2}
\end{equation}
Here, $\xi_{\varphi}$ is the representing vector of $\varphi$ in the natural cone.
\end{proposition}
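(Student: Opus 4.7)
The plan is to reduce the assertion to Proposition~\ref{prop: bicentralizer convex combination} applied to the relative commutant $F^c = F' \cap M$. Since $\sigma^\varphi$ preserves $F$ it also preserves $F^c$, and the multiplication map yields a $*$-isomorphism $F \otimes F^c \cong M$ under which $\varphi = \varphi|_F \otimes \varphi|_{F^c}$ and $\varepsilon_{F,\varphi} = \mathrm{id}_F \otimes \varphi|_{F^c}$. Because $F$ is a finite-dimensional type I factor, $F^c$ inherits from $M$ the property of being a type {\rm III}$_1$ factor with separable predual.

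The core reduction is the implication $B_\varphi(M) = \mathbb{C}1 \;\Rightarrow\; B_{\varphi|_{F^c}}(F^c) = \mathbb{C}1$. To see this, given $y \in B_{\varphi|_{F^c}}(F^c)$, I would verify that $1 \otimes y \in B_\varphi(M)$, whence $y$ is scalar. For a bounded sequence $(a_n) \in {\rm AC}(\varphi)$, fix matrix units $\{e_{ij}\}$ for $F \cong M_d(\mathbb{C})$ and expand $a_n = \sum_{i,j} e_{ij} \otimes a_n^{ij}$ with $a_n^{ij} = \sum_k e_{ki} a_n e_{jk} \in F^c$. The identity $e_{ki} a_n e_{jk} = e_{kk} \otimes a_n^{ij}$ combined with $\varphi = \varphi|_F \otimes \varphi|_{F^c}$ transfers the AC condition on $(a_n)$, entry by entry, to an AC condition on each $(a_n^{ij})_n$ in $F^c$. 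The hypothesis on $y$ then gives $\|[y, a_n^{ij}]\|_{\varphi|_{F^c}} \to 0$ for every $(i,j)$, and a straightforward tensor-product norm computation bounds $\|[1 \otimes y, a_n]\|_\varphi$ by a finite linear combination of these, forcing $1 \otimes y \in B_\varphi(M) = \mathbb{C}1$.

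With $B_{\varphi|_{F^c}}(F^c) = \mathbb{C}1$ in hand, Proposition~\ref{prop: bicentralizer convex combination} applied to $F^c$ yields, for each $b \in F^c$ and $\delta > 0$, that $\varphi|_{F^c}(b) \cdot 1$ lies in $\overline{\rm{conv}}\{v^* b v: v \in \mathcal{U}(F^c),\ \|v\xi_\varphi - \xi_\varphi v\| \leq \delta\}$; here the equivalence of the functional norm $\|v\varphi - \varphi v\|$ with the vector norm $\|v\xi_\varphi - \xi_\varphi v\|$ on bounded sets, combined with a limit $\delta' \to 0$, lets us pass between the two formulations and pin down the exact scalar $\varphi|_{F^c}(b)$. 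A simultaneous version for the finite family $\{a^{ij}\}_{ij}$, obtained by iteratively applying this single-element statement to the $d^2$ entries at progressively finer scales while using the approximate $\varphi$-invariance of conjugation by such unitaries to control accumulated errors, produces a common convex combination $\sum_l \mu_l v_l^*(\cdot) v_l$ with $v_l \in \mathcal{U}(F^c)$, $\|v_l \xi_\varphi - \xi_\varphi v_l\| \leq \delta$, that $\sigma$-weakly approximates $\varphi|_{F^c}(a^{ij}) \cdot 1$ for every $(i,j)$. Reassembling via $a = \sum_{ij} e_{ij} \otimes a^{ij}$ gives $\sum_l \mu_l v_l^* a v_l$ $\sigma$-weakly close to $\sum_{ij} \varphi|_{F^c}(a^{ij}) e_{ij} = \varepsilon_{F,\varphi}(a)$, finishing the proof.

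The main obstacle will be the bicentralizer reduction step: the technical content lies in verifying that the matrix components $(a_n^{ij})_n$ of a $\varphi$-centralizing sequence are $\varphi|_{F^c}$-centralizing in $F^c$, which requires the tensor factorization of $\varphi$ (to reduce the relevant normal functionals, arising from compressions by matrix units, back to $\varphi|_{F^c}$) together with the $\sigma^\varphi$-invariance of $F$ that makes such a factorization available in the first place.
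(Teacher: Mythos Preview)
Your overall strategy---reduce to $F^{\text{c}}$, establish $B_{\varphi|_{F^{\text{c}}}}=\mathbb{C}1$, then average---matches the paper's, and the final ``simultaneous'' averaging over the finitely many matrix entries is fine. The gap is in your proof of the implication $B_{\varphi}(M)=\mathbb{C}1\Rightarrow B_{\varphi|_{F^{\text{c}}}}(F^{\text{c}})=\mathbb{C}1$.

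You claim that if $(a_n)\in{\rm AC}(\varphi)$ then each matrix component $(a_n^{ij})_n$ lies in ${\rm AC}(\varphi|_{F^{\text{c}}})$. This is false whenever $\varphi|_F$ is not tracial. Write $\psi=\varphi|_{F^{\text{c}}}$ and let $\lambda_1,\dots,\lambda_d$ be the eigenvalues of ${\rm d}\varphi|_F/{\rm dTr}_F$. A direct computation with $a_n=\sum_{p,q}e_{pq}\otimes a_n^{pq}$ gives
\[
(a_n\varphi-\varphi a_n)(e_{pq}\otimes c)=\lambda_p\,\psi(c\,a_n^{qp})-\lambda_q\,\psi(a_n^{qp}c),
\]
so $(a_n)\in{\rm AC}(\varphi)$ is equivalent to the \emph{twisted} conditions $\|\lambda_p\, a_n^{qp}\psi-\lambda_q\,\psi\, a_n^{qp}\|\to 0$ for each $(p,q)$, not to $\|a_n^{qp}\psi-\psi\, a_n^{qp}\|\to 0$. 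Concretely, if $b\in F^{\text{c}}$ is a nonzero $\sigma^{\psi}$-eigenvector with $\sigma_t^{\psi}(b)=(\lambda_j/\lambda_i)^{it}b$ for some $i\neq j$ with $\lambda_i\neq\lambda_j$ (such $b$ exist for many choices of $M$ and $\varphi$, e.g.\ whenever $\psi$ has an almost periodic tensor factor with the right eigenvalue ratio), then the constant sequence $a_n=e_{ij}\otimes b$ lies in $M_{\varphi}\subset{\rm AC}(\varphi)$, yet $a_n^{ij}=b$ satisfies $b\psi=(\lambda_i/\lambda_j)\psi b\neq\psi b$, so $b\notin{\rm AC}(\psi)$. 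For this sequence your argument would require $[y,b]=0$ for every $y\in B_{\psi}$, which does not follow from the definition of $B_{\psi}$.

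The paper avoids this by not attempting a direct matrix-entry argument at all. Since $M$ is a type III$_1$ factor and $F$ is a finite-dimensional type I factor, the relative commutant $F^{\text{c}}$ is again a type III$_1$ factor with separable predual, in fact $F^{\text{c}}\cong M$. One then invokes the result of Connes recalled in \S\ref{subsec: bicentralizers} (triviality of the bicentralizer for one faithful normal state on a type III$_1$ factor implies it for all, via Connes--St{\o}rmer transitivity) to conclude $B_{\varphi|_{F^{\text{c}}}}=\mathbb{C}1$ directly. With that in hand your final paragraph, and the paper's one-line application of \eqref{eq: Rem1.4} to $F^{\text{c}}$, both go through.
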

\begin{proof}
The proof is essentially the same as Proposition \ref{prop: bicentralizer convex combination}, so we only indicate the outline.   
Note that by Araki-Powers-St\o rmer inequality, for every $u\in \mathcal{U}(M)$ one has:
\begin{equation*}
\|\xi_{\varphi}-u\xi_{\varphi}u^*\|^2\le \|\varphi-u\varphi u^*\|\le \|\xi_{\varphi}-u\xi_{\varphi}u^*\|\cdot \|\xi_{\varphi}+u\xi_{\varphi}u^*\|.
\end{equation*}
 Therefore in the arguments below, we may replace the condition $``\|u\xi_{\varphi}-\xi_{\varphi}u\|\le \delta"$ in Proposition \ref{prop: bicentralizer condition} with the condition  $``\|u\varphi-\varphi u\|\le \delta"$, as we take $\delta>0$ to be arbitrarily small. 
 As was pointed out in \cite[Remark 1.4]{Haagerup87}, it follows from the proof of Proposition \ref{prop: bicentralizer convex combination} that the condition $B_{\varphi}=\mathbb{C}1$ is equivalent to the next condition that for all $a\in M$ and $\delta>0$, 
\begin{equation}
\varphi (a)1\in \bigcap_{\delta>0}\overline{\rm{conv}}\{u^*au; u\in \mathcal{U}(M),\,\|u\xi_{\varphi}-\xi_{\varphi}u\|<\delta\}.\label{eq: Rem1.4}
\end{equation}
Let $a\in M$. Since $M\cong F\otimes F^{\text{c}}$ with $\varphi=\varphi|_F\otimes \varphi|_{F^{\text{c}}}$, we may now apply (\ref{eq: Rem1.4}) to $F^{\text{c}}(\cong M)$ and $\varphi|_{F^{\text{c}}}$ to obtain 
\[\varepsilon_{F,\varphi}(a)=\text{id}_F\otimes \varphi|_{F^{\text{c}}}(a)\in \overline{\rm{conv}}\{u^*au;\ u\in \mathcal{U}(F^{\text{c}}),\ \|u\xi_{\varphi}-\xi_{\varphi}u\|\le \delta\}.\]
Note that we used the fact that $\|\varphi u-u\varphi\|=\|\psi u-u\psi\|$, where $\psi:=\varphi|_{F^{\text{c}}}$ and $u\in \mathcal{U}(F^{\text{c}})$ thanks to the existence of a normal faithful $\varphi$-preserving conditional expectation from $M$ onto $F^{\text{c}}$. 
 \end{proof}
 \if0
\begin{lemma}\label{lem: relative bicentralizer lemma}Let $F\subset M$, and $\varphi$ be as in Proposition \ref{prop: bicentralizer condition}. For each $a\in M$ and $\delta>0$, let 
\begin{equation*}
C_{F,\varphi}(a,\delta):=\overline{\rm{conv}}\{uau^*; w\in \mathcal{U}(F^{{\rm{c}}}),\ \|u\varphi-\varphi u\|\le \delta\},\ \ \ \ C_{F,\varphi}(a):=\bigcap_{\delta>0}C_{F,\varphi}(a,\delta),
\end{equation*}
and let 
\begin{align*}
{\rm{AC}}(\varphi,F^{{\rm{c}}},M)&:=\{(a_n)_{n=1}^{\infty}\in \ell^{\infty}(\mathbb{N},F^{{\rm{c}}}); \|a_n\varphi-\varphi a_n\|\stackrel{n\to \infty}{\to}0\},\\
B_{\varphi}(F^{{\rm{c}}},M)&:=\{x\in M; \forall (a_n)_{n=1}^{\infty}\in {\rm{AC}}(\varphi,F^{{\rm{c}}},M)\ \|a_nx-xa_n\|_{\varphi}\stackrel{n\to \infty}{\to}0\}. 
\end{align*}
Then 
\begin{list}{}{}
\item[{\rm{(i)}}] $a\in B_{\varphi}(F^{{\rm{c}}},M)\Leftrightarrow C_{F,\varphi}(a)=\{a\}$.
\item[{\rm{(ii)}}] If $B_{\varphi}=\mathbb{C}1$, then $B_{\varphi}(F^{{\rm{c}}},M)=F$.
\end{list}
\end{lemma}
\begin{proof}
(i) Apart from an obvious modification, the argument of \cite[Lemma 1.2]{Haagerup87} works without any change.\\
(ii) It is clear that $F\subset B_{\varphi}(F^{{\rm{c}}},M)$. To prove the converse inclusion, let $x\in B_{\varphi}(F^{{\rm{c}}},M)$. 
By $M\cong F\otimes F^{\text{c}}$ and $\varphi=\varphi|_F\otimes \varphi|_{F^{\text{c}}}$, we may find a system of matrix units $\{e_{ij}\}_{i,j=1}^d$ on $F$ and $\lambda_1,\ldots, \lambda_d>0$ with $\sum_{i=1}^d\lambda_i=1$ such that $\varphi(e_{ij})=\lambda_i\delta_{ij}\ (1\le i,j\le d)$ holds. 
Let $x_{ij}\in F^{\rm{c}}$ be such that $x=\sum_{i,j=1}^de_{ij}\otimes x_{ij}$. Let $(a_n)_{n=1}^{\infty}\in {\rm{AC}}(\varphi,F^{\text{c}},M)$. Then 
\begin{align*}
\varphi(x^*x)&=\sum_{i,j,k=1}^d\varphi(e_{jk}\otimes x_{ij}^*x_{ik})\notag \\
&=\sum_{j=1}^d\lambda_j\sum_{i=1}^d\|x_{ij}\|_{\varphi}^2.
\end{align*}
Therefore 
\begin{align*}
\|x(1\otimes a_n)-(1\otimes a_n)x\|_{\varphi}^2&=\sum_{j=1}^d\lambda_j\sum_{i=1}^d\|x_{ij}a_n-a_nx_{ij}\|_{\varphi}^2\stackrel{n\to \infty}{\to}0.
\end{align*}  
This shows that for each $i,j$, $x_{ij}$ belongs to the bicentralizer of $(F^{\text{c}},\varphi|_{F^{\text{c}}})$, which is trivial because $B_{\varphi}=\mathbb{C}1$ and $F^{\text{c}}\cong M$. Therefore $x_{ij}\in \mathbb{C}$, so that $x\in F\subset M$. This shows that $B_{\varphi}(F^{{\rm{c}}},M)=F$. 
\end{proof}
\begin{proof}[Proof of Proposition \ref{prop: bicentralizer condition}] 
For each $\delta>0$, $a\in C_{F,\varphi}(a,\delta)$ holds. Hence the set $C_{F,\varphi}(a)$ is a nonempty $\sigma$-weakly compact convex subset of $M$. Let $H_{\varphi}$ be the completion of $M$ with respect to $\|\cdot \|_{\varphi}$, with the canonical embedding $\Lambda_{\varphi}\colon M\to H_{\varphi}$. 
Then $\Lambda_{\varphi}(C_{F,\varphi}(a))$ is a norm-closed convex subset of $H_{\varphi}$. Since $H_{\varphi}$ is a Hilbert space, there exists $b\in C_{F,\varphi}(a)$ such that 
\begin{equation*}
\|x\|_{\varphi}>\|b\|_{\varphi},\ \ \ \ \ \text{for\ all\ }x\in C_{F,\varphi}(a)\setminus \{b\}. 
\end{equation*}
Then, arguing as in the \cite[Lemma 1.3 (a)$\Rightarrow$(b)]{Haagerup87}, we obtain $C_{F,\varphi}(b)=\{b\}$, so that by Lemma \ref{lem: relative bicentralizer lemma}, $b\in B_{\varphi}(F^{\text{c}},M)=F$ holds. In particular, $b\in C_{F,\varphi}(a,\delta)\cap F$ holds for every $\delta>0$. We show that $b=\varepsilon_{F,\varphi}(a)$. 
Since $\varphi=\varphi|_F\otimes \varphi|_{F^{\text{c}}}$ is faithful, it suffices to show that $\varphi(bx)=\varphi(\varepsilon_{F,\varphi}(a)x)$ for every $x\in F$. Let $\varepsilon>0$. Choose $\delta>0$ such that $\delta \|ax\|\le \varepsilon/2$. Since $b\in C_{F,\varphi}(a,\delta)$, there exist $\lambda_1,\ldots, \lambda_n\ge 0$ with $\sum_{i=1}^n\lambda_i=1$ and $u_1,\ldots,u_n\in \mathcal{U}(F^{\text{c}})$ such that $\|u_i\varphi-\varphi u\|\le \delta\ (1\le i\le n)$ and 
\begin{equation*}
\left |\varphi(bx)-\sum_{i=1}^n\lambda_i\varphi (u_i^*au_ix)\right |\le \frac{\varepsilon}{2}. 
\end{equation*}
On the other hand, 
\begin{align*}
\left |\sum_{i=1}^n\lambda_i\varphi (u_i^*au_ix)-\varphi (ax)\right |&\le \sum_{i=1}^n\lambda_i\|u_i\varphi u_i^*-\varphi\|\cdot \|ax\|\notag \\
&\le \frac{\varepsilon}{2}. 
\end{align*} 
This shows that 
\begin{equation*}
|\varphi(bx)-\varphi(ax)|=|\varphi(bx)-\varphi(\varepsilon_{F,\varphi}(a)x)|\le \varepsilon.
\end{equation*}   
Since $\varepsilon>0$ is arbitrary, we have $b=\varepsilon_{F,\varphi}(a)\in C_{F,\varphi}(a)$. This completes the proof.  
\end{proof}
\fi
\subsection{Almost unitary equivalence in Hilbert $N$-bimodules}
We recall a result about almost unitary equivalence in Hilbert bimodules established in \cite{Haagerup89} which is a generalization of \cite[Theorem 4.2]{Haagerup85}. 
Let $N$ be a von Neumann algbera, and $H$ be a normal Hilbert $N$-bimodule, i.e., $H$ is a Hilbert space on which there are defined left and right actions by elements from $N$: 
\begin{equation*}
(x,\xi)\mapsto x\xi,\ \ \ (x,\xi)\mapsto \xi x,\ \ \ \ x\in N,\ \xi\in H
\end{equation*}
such that the above maps $N\times H\to H$ are bilinear and 
\begin{equation*}
(x\xi)y=x(\xi y),\ \ \ \ \ \ x,y\in N,\ \ \ \xi\in H. 
\end{equation*}
Moreover, $x\mapsto L_x$, where $L_x\xi:=x\xi\ (\xi\in H)$ is a normal unital $*$-homomorphism, and $x\mapsto R_x$, where $R_x\xi:=\xi x\ (\xi\in H)$ is a normal unital $*$-antihomomorphism. 
\begin{definition}
Let $N$ be a von Neumann algebra, let $(N,H)$ be a normal Hilbert $N$-bimodule, and let $\delta\in \mathbb{R}_+$. Two $n$-tuples $(\xi_1,\ldots,\xi_n)$ and $(\eta_1,\ldots,\eta_n)$ of unit vectors in $H$ are called {\it $\delta$-related}, if there exists a family $(a_i)_{i\in I}$ of operators in $N$, such that 
\begin{equation*}
\sum_{i\in I}a_i^*a_i=\sum_{i\in I}a_ia_i^*=1
\end{equation*}
and 
\begin{equation*}
\sum_{i\in I}\|a_i\xi_k-\eta_ka_i\|^2<\delta,\ \ \ \ \ \ \ \ \ k=1,\ldots, n. 
\end{equation*}
\end{definition}
We will use the following result which relates the $\delta$-relatedness to approximate unitary equivalence in Hilbert $N$-bimodules: 
\begin{theorem}[{\cite[Theorem 2.3]{Haagerup89}}]\label{thm: almost unitary equivalence}
 For every $n\in \mathbb{N}$ and $\varepsilon>0$, there exists a $\delta=\delta(n,\varepsilon)>0$, such that for all von Neumann algebra $N$ and $\delta$-related $n$-tuples $(\xi_1,\ldots,\xi_n)$ and $(\eta_1,\ldots,\eta_n)$ of unit vectors in a normal Hilbert $N$-bimodule $H$, there exists a unitary $u\in \mathcal{U}(N)$ such that 
\begin{equation*}
\|u\xi_k-\eta_ku\|<\varepsilon,\ \ \ \ \ \ k=1,\ldots, n. 
\end{equation*}
\end{theorem}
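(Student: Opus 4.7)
My plan is to convert the family $(a_i)_{i\in I}$ with its bistochastic relations into a single unitary $u\in\mathcal{U}(N)$, tracking constants quantitatively so that $\delta$ depends only on $n$ and $\varepsilon$. The strategy has four main steps.

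Step 1 (Reduction to a finite index). Since $\sum_i a_i^* a_i$ and $\sum_i a_i a_i^*$ converge strongly to $1$, I would first choose a finite subset $J\subset I$ so that $\|1-\sum_{i\in J}a_i^*a_i\|$ and $\|1-\sum_{i\in J}a_ia_i^*\|$ are as small as needed. A single bounded correction operator restores the exact identities, at the cost of enlarging $\delta$ by an arbitrarily small amount. From now on one works with a finite bistochastic family $(a_i)_{i=1}^{m}$.

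Step 2 (Matrix completion). Form the column $V=\sum_{i=1}^{m}a_i\otimes e_{i,1}\in N\otimes M_m(\mathbb{C})$. The identity $\sum_i a_i^*a_i=1$ says $V^*V=e_{1,1}$, and $\sum_i a_ia_i^*=1$ says that the full diagonal entry $\sum_i a_ia_i^*\otimes e_{1,1}$ equals $e_{1,1}$ after compression. By a Gram–Schmidt completion inside $N\otimes M_m$ one extends $V$ to a unitary $U\in\mathcal{U}(N\otimes M_m)$ whose first column is $(a_i)_{i=1}^{m}$; the two sum conditions are exactly what make this completion possible.

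Step 3 ($\delta$-relatedness in the amplification). View $H\otimes\mathbb{C}^m$ as an $(N\otimes M_m)$-bimodule in the natural way and put $\tilde\xi_k:=\xi_k\otimes e_1$, $\tilde\eta_k:=\eta_k\otimes e_1$. A direct computation gives
\begin{equation*}
\|U\tilde\xi_k-\tilde\eta_k U\|^2=\sum_{i=1}^{m}\|a_i\xi_k-\eta_ka_i\|^2<\delta,
\end{equation*}
so $U$ already implements the desired almost-intertwining, but living in the amplified algebra rather than in $N$.

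Step 4 (Descent to $N$). The remaining task is to replace $U\in\mathcal{U}(N\otimes M_m)$ by a single unitary $u\in\mathcal{U}(N)$. My approach would be to apply the conditional expectation onto the diagonal subalgebra $N\otimes \mathbb{C}1$, extract a contraction $c\in N$ close to $1$ (in an appropriate $\ell^2$-sense dictated by the bistochastic structure), and recover the unitary $u$ from its polar decomposition via an Araki–Powers–Størmer type estimate. The bistochastic hypothesis ensures that the ``diagonal mass'' of $U$ is close to the identity, so $c$ is close to $1$ quantitatively, and hence $u$ is close enough to $U$ in the appropriate bimodule norm to inherit the bound $\|u\xi_k-\eta_ku\|<\varepsilon$ with a $\delta=\delta(n,\varepsilon)$ independent of $(N,H)$.

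The main obstacle is Step 4. In the tracial II$_1$ setting of \cite{Haagerup85} one exploits the normalized trace on $M_m$ and the Powers–Størmer inequality with respect to it; in the general bimodule case no such canonical trace is available, and the delicate part is to obtain polar-decomposition estimates that are uniform in the ambient algebra and the bimodule. Overcoming exactly this difficulty is the main technical contribution of \cite{Haagerup89}, which provides the bimodule-theoretic tools needed to make the descent in Step 4 rigorous.
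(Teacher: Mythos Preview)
First a framing remark: the present paper does not prove this theorem; it merely quotes it from \cite{Haagerup89} and records (Remark~\ref{rem: approximate related}) that the approximate bistochastic conditions suffice to feed into \cite[Lemma~2.5]{Haagerup89}, from which the theorem follows. So the comparison is with the argument in \cite{Haagerup89}, not with anything in this paper.

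Your Steps~1--3 contain a genuine error, and Step~4 is circular.

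\textbf{Step~3 fails because the bimodule does not exist.} There is no natural \emph{right} action of $M_m(\mathbb{C})$ on $\mathbb{C}^m$, so $H\otimes\mathbb{C}^m$ is only a left $(N\otimes M_m)$-module, not a bimodule, and the expression $\tilde\eta_k U$ is undefined. If you repair this by using the honest bimodule $H\otimes L^2(M_m,\mathrm{tr})$ with $\tilde\xi_k=\xi_k\otimes e_{11}$, $\tilde\eta_k=\eta_k\otimes e_{11}$, then a direct computation gives
\[
U\tilde\xi_k=\sum_{i}a_i\xi_k\otimes e_{i1},\qquad \tilde\eta_k U=\sum_{j}\eta_k u_{1j}\otimes e_{1j},
\]
so $\|U\tilde\xi_k-\tilde\eta_kU\|^2$ involves the unknown first \emph{row} $(u_{1j})_j$ of the completion $U$, not the first column $(a_i)_i$, and your claimed identity $\|U\tilde\xi_k-\tilde\eta_kU\|^2=\sum_i\|a_i\xi_k-\eta_ka_i\|^2$ is false. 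The amplification therefore does not convert the family $(a_i)$ into a single unitary that already almost-intertwines.

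\textbf{Step~4 is circular.} You concede that the descent from $N\otimes M_m$ to $N$ is the hard part and appeal to \cite{Haagerup89} for it. But \cite{Haagerup89} does not proceed via amplification and descent; its argument works directly with the family $(a_i)$ in $N$. So invoking \cite{Haagerup89} for Step~4 amounts to abandoning Steps~1--3 and using the original proof. In particular, your claim that ``the bistochastic hypothesis ensures that the diagonal mass of $U$ is close to the identity'' is unjustified: the hypotheses control only the first column of $U$, and the conditional expectation $\frac{1}{m}\sum_i u_{ii}$ onto $N\otimes\mathbb{C}1$ has no reason to be close to a unitary.

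\textbf{What \cite{Haagerup89} actually does.} The proof does not pass through any matrix amplification. From the bistochastic family one first extracts (Lemma~2.5 there) a single nonzero partial isometry $v\in N$ with $\|v\xi_k-\eta_kv\|$ controlled by $\delta^{1/2}$ times $\|v\xi_\varphi\|$-type quantities; this uses a convexity/extremality argument together with polar decomposition estimates that are uniform over all $(N,H)$. One then runs a maximality (exhaustion) argument, repeatedly applying the lemma on the complementary corners $(1-vv^*)N(1-v^*v)$, to assemble a full unitary $u$ satisfying $\|u\xi_k-\eta_ku\|<\varepsilon$. The dependence $\delta=\delta(n,\varepsilon)$ comes from tracking the constants through this iteration, not from any amplification bookkeeping.
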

\begin{remark}\label{rem: approximate related}
As can be seen in the proof of \cite[Theorem 2.3]{Haagerup89}, in order to show that the conclusion of Theorem \ref{thm: almost unitary equivalence} holds, it suffices to show the following: for every $\sigma$-strong neighborhood $\mathcal{V}$ of $0$ in $N$, there exist $a_1,\ldots, a_p\in N$ such that 
\begin{align}
\sum_{i=1}^p\|a_i\xi_k-\eta_ka_i\|^2<\delta,\ \ \ \ \ \ k=1,\ldots, n \label{eq: approximate related}\\
\sum_{i=1}^pa_i^*a_i\le 1,\ \ \ \sum_{i=1}^pa_ia_i^*\le 1\label{eq: approximate related2}\\
\sum_{i=1}^pa_i^*a_i\in 1+\mathcal{V},\ \ \ \ \sum_{i=1}^pa_ia_i^*\in 1+\mathcal{V}.\label{eq: approximate related3}
\end{align}
This is because we can obtain the conclusions of \cite[Lemma 2.5]{Haagerup89} out of (\ref{eq: approximate related}), (\ref{eq: approximate related2}) and (\ref{eq: approximate related3}), which is enough to prove Theorem \ref{thm: almost unitary equivalence}.  We will use this variant in the proof Lemma \ref{lem: UH5.5}. 
\end{remark}
\section{Completely positive maps from $m\times m$-matrices into an injective factor of type III$_1$}
The main result of this section is:
\begin{theorem}\label{thm: UH3.1}
Let $N$ be an injective factor of type {\rm{III}}$_1$ with separable predual, and let $\varphi$ be a faithful normal state on $N$. Then for every finite set $u_1,\ldots,u_n$ of unitaries in $N$, and every $\varepsilon,\delta>0$, there exists $m\in \mathbb{N}$, a unital completely positive map $T\colon M_m(\mathbb{C})\to N$, and $n$ unitaries $v_1,\ldots,v_n$ in $M_m(\mathbb{C})$, such that  $\psi=\varphi\circ T$ is a normal faithful state on $M_m(\mathbb{C})$, and 
\eqa{
\|\sigma_t^{\varphi}\circ T-T\circ \sigma_t^{\psi}\|&\le \delta |t|,\ \ \ t\in \mathbb{R},\\
\|T(v_k)-u_k\|_{\varphi}&<\varepsilon,\ \ \ k=1,\ldots,n.
} 
\end{theorem}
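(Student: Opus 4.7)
My strategy is to deduce Theorem \ref{thm: UH3.1} from the Step 1 factorization of $\operatorname{id}_N$ through the hyperfinite II$_1$ factor $R$ (which presumably is the content of Lemma \ref{lem: UH3.4} established earlier in this section via continuous crossed product techniques), and then to collapse $R$ to a finite-dimensional matrix subfactor by truncating along an ITPFI decomposition of $(R,\psi)$.

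First, I would invoke the Step 1 factorization to obtain, for the given data $(\varphi,u_1,\dots,u_n,\varepsilon)$, UCP maps $S\colon N\to R$ and $T\colon R\to N$ together with a normal faithful state $\psi$ on $R$ such that $\varphi\circ T=\psi$, $\psi\circ S=\varphi$, with exact modular intertwining
\[\sigma_t^{\varphi}\circ T=T\circ\sigma_t^{\psi},\qquad \sigma_t^{\psi}\circ S=S\circ\sigma_t^{\varphi},\qquad t\in\mathbb{R},\]
and $\|T\circ S(u_k)-u_k\|_{\varphi}<\varepsilon/2$ for $k=1,\dots,n$. This already packages all the modular compatibility we will need; it transports the problem to the purely II$_1$ object $R$, where one has direct access to finite-dimensional approximations.

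Second, I would exploit the hyperfiniteness of $R$ to realize $(R,\psi)$ as an infinite tensor product $\bigotimes_{n\ge 1}(M_{k_n}(\mathbb{C}),\psi_n)$ of faithful finite-dimensional states (up to a harmless perturbation of $\psi$, absorbable in $\varepsilon$). The finite truncations $F_N:=\bigotimes_{n=1}^N M_{k_n}(\mathbb{C})\simeq M_{m_N}(\mathbb{C})$ are then $\sigma^{\psi}$-globally invariant finite-dimensional subfactors, with $\psi$-preserving conditional expectations $E_N\colon R\to F_N$ that intertwine modular groups and satisfy $E_N\to\operatorname{id}_R$ in the $\|\cdot\|_{\psi}^{\sharp}$ topology. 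Setting $T_0:=T|_{F_N}\colon F_N\to N$, the compatibility $\varphi\circ T_0=\psi|_{F_N}$ and the \emph{exact} intertwining $\sigma_t^{\varphi}\circ T_0=T_0\circ\sigma_t^{\psi|_{F_N}}$ come for free (in particular, the $\delta|t|$ slack allowed in the statement is not even needed at this stage). The elements $w_k:=E_N(S(u_k))\in F_N$ satisfy $\|T_0(w_k)-u_k\|_{\varphi}<\varepsilon$ for $N$ large enough, by combining $E_N(S(u_k))\to S(u_k)$, $\varphi\circ T=\psi$, and the Kadison--Schwarz estimate $\|T(y)\|_{\varphi}\le\|y\|_{\psi}$.

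The step I expect to be the main obstacle is the final passage from the contractions $w_k\in F_N$ to genuine \emph{unitaries} $v_k\in F_N$ while keeping $\|T_0(v_k)-u_k\|_{\varphi}<\varepsilon$. Since each $u_k$ is unitary, Kadison--Schwarz applied to the UCP maps $T$, $E_N$ and $S$ forces both $\psi(w_k^{*}w_k)$ and $\psi(w_kw_k^{*})$ to be close to $1$; in the finite-dimensional matrix algebra $F_N$ this, together with the bound $\|w_k\|\le 1$, permits a polar-decomposition and functional-calculus rounding producing unitaries $v_k\in F_N$ with $\|v_k-w_k\|_{\psi}$ small, whence $\|T_0(v_k)-T_0(w_k)\|_{\varphi}\le\|v_k-w_k\|_{\psi}$ by Kadison--Schwarz. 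Making the $\varepsilon$--$\delta$ budget add up requires first fixing the accuracy in Step 1, then the truncation level $N$, then the rounding error, in that order; the $\delta|t|$ estimate in the statement appears only if one allows $F_N$ to be only approximately $\sigma^{\psi}$-invariant, which gives extra flexibility but is not strictly necessary in the product-state realization above.
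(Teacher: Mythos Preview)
Your overall architecture---pass through $R$ via Lemma \ref{lem: UH3.4}, then cut down to a matrix subfactor---matches the paper's. The gap is in the sentence ``realize $(R,\psi)$ as an infinite tensor product $\bigotimes(M_{k_n},\psi_n)$ (up to a harmless perturbation of $\psi$, absorbable in $\varepsilon$)'' and the ensuing claim that the truncations $F_N$ are $\sigma^{\psi}$-invariant with \emph{exact} intertwining $\sigma_t^{\varphi}\circ T_0=T_0\circ\sigma_t^{\psi|_{F_N}}$. The state $\psi$ produced by Lemma \ref{lem: UH3.4} is not a product state: its density $h=\mathrm{d}\psi/\mathrm{d}\tau$ has purely continuous spectrum equal to an interval $[\lambda_1,\lambda_2]$, so no finite-dimensional subfactor of $R$ is globally $\sigma^{\psi}$-invariant, and no increasing sequence of such subfactors can be dense. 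Replacing $\psi$ by a nearby product state $\psi'$ is not harmless: norm proximity $\|\psi-\psi'\|<\varepsilon$ gives no control whatsoever on $\|\sigma_t^{\psi}-\sigma_t^{\psi'}\|$, and it is $\sigma_t^{\psi}$ (not $\sigma_t^{\psi'}$) that $T$ intertwines with $\sigma_t^{\varphi}$. So the $\delta|t|$ slack in the statement is not optional; it is forced.

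The paper closes this gap by exploiting the specific spectral information in Lemma \ref{lem: UH3.4}. Since $\sigma(h)=[\lambda_1,\lambda_2]\subset(0,\infty)$, one can approximate $h$ in \emph{operator norm} by a finite-spectrum operator $h_0=\sum\mu_ip_i$ with $\|\log h-\log h_0\|<\delta/4$, hence $\|h^{it}-h_0^{it}\|\le(\delta/4)|t|$. One then chooses, by hyperfiniteness, a matrix subfactor $F$ containing the $p_i$ and approximating the $x_k$ in $\|\cdot\|_{\psi}$ via the \emph{trace}-preserving expectation $E_F$. The subfactor $F$ is not $\sigma^{\psi}$-invariant, but $h_F:=\mathrm{d}(\psi|_F)/\mathrm{d}(\tau|_F)=E_F(h)$ satisfies $\|h_F-h_0\|\le\|h-h_0\|$ and, by Lemma \ref{lem: inclusion of spectrum}, again $\sigma(h_F)\subset[\lambda_1,\lambda_2]$; thus $\|\log h_F-\log h_0\|<\delta/4$ as well. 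Comparing $\sigma_t^{\psi}$ and $\sigma_t^{\psi|_F}$ to the intermediate $\mathrm{Ad}(h_0^{it})$ yields $\|\sigma_t^{\varphi}\circ T_F-T_F\circ\sigma_t^{\psi|_F}\|\le\delta|t|$. Your polar-decomposition rounding of the $y_k=E_F(x_k)$ to unitaries $v_k$ is exactly what the paper does and is correct; it is only the modular step that needs the argument above.
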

In the following we let $M=N\rtimes_{\sigma^{\varphi}}\mathbb{R}$ be the crossed product of $N$ by $\sigma^{\varphi}$ with generators $\pi_{\sigma^{\varphi}}(x)\ (x\in N)$ and $\lambda (s)\ (s\in \mathbb{R})$. We identify $\pi_{\sigma^{\varphi}}(x)$ with $x\in N$. Let $a$ be the (unbounded) self-adjoint operator for which $\lambda(s)=\exp (isa)\ (s\in \mathbb{R})$. 
For $f\in L^1(\mathbb{R})$, we define the Fourier transform $\hat{f}$ by 
\begin{equation*}
\hat{f}(s)=\frac{1}{\sqrt{2\pi}}\int_{-\infty}^{\infty}e^{-ist}f(t)\,dt,\ \ \ \ \ s\in \mathbb{R}.
\end{equation*}
In the sequel, von Neumann algebra-valued integrals are understood to be the $\sigma$-weak sense. 
%That is, if $x\colon \mathbb{R}\to M$ is a $\sigma$-weakly measurable map and $\mu$ is a $\sigma$-finite positive Borel measure on $\mathbb{R}$, such that $t\mapsto \|x(t)\|$ is in $L^1(\mathbb{R},\mu)$, then $\int_{-\infty}^{\infty}x(t)\,d\mu (t)$ denotes the unique element $y\in M$ such that 
%\[\psi(y)=\int_{-\infty}^{\infty}\psi(x(t))\,d\mu(t),\ \ \ \ \ \psi\in M_*.\] 
Let $(\theta^{\varphi}_s)_{s\in \mathbb{R}}$ be the dual action of $\sigma^{\varphi}$ on $M$. 
By \cite{Haagerup79-2}, there exists a normal faithful semifinite operator-valued weight $P\colon M_+\to \widehat{N}_+$ ($\widehat{N}_+$ is the extended positive part of $N$)
given by  
\begin{equation}
P(x)=\int_{-\infty}^{\infty}\theta_s^{\varphi}(x)\,ds,\ \ \ \ \ x\in M_+.\label{eq: operator valued weight}
\end{equation}
Following \cite{ConnesTakesaki77}, if we put 
\begin{equation*}
\mathfrak{m}:=\text{span}\left \{x\in M_+; \sup_{c>0}\left \|\int_{-c}^c\theta_t^{\varphi}(x)\,dt\right \|<\infty\right \},
\end{equation*} 
then the formula (\ref{eq: operator valued weight}) for $x\in \mathfrak{m}$ makes sense and $P(x)\in N$. Moreover, $\mathfrak{m}\ni x\mapsto P(x)\in N$ defines a positive linear map.  

For all $x\in \mathfrak{m}$, the $\sigma$-weak integral $\int_{-c}^c\theta_t^{\varphi}(x)\,dt$ is $\sigma$-strongly convergent as $c\to \infty$. The range of $P$ is contained in $\pi_{\sigma^{\varphi}}(N)$, because  $\pi_{\sigma^{\varphi}}(N)$ is the fixed point algebra in $M$ under the dual action. 

\begin{lemma}\label{lem: UH3.2}
Let $t\mapsto x(t)$ be a $\sigma$-strongly* continuous function from $\mathbb{R}$ to $N$ such that $t\mapsto \|x(t)\|$ is in $L^1(\mathbb{R})\cap L^{\infty}(\mathbb{R})$. Put
\[x:=\int_{-\infty}^{\infty}\lambda (t)x(t)\,dt\in M.\]
Then $x^*x\in \mathfrak{m}$, and 
\begin{equation*}
P(x^*x)=2\pi \int_{-\infty}^{\infty}x(t)^*x(t)\,dt.\label{eq: P(x^*x)}
\end{equation*}

\end{lemma}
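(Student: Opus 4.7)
\textit{Plan.} The idea is to compute $\theta_s^\varphi(x^*x)$ explicitly via the dual action, use positivity to reduce the identity to a scalar Fourier-inversion statement, and then conclude by a Gaussian monotone-convergence trick. First, the hypotheses $\|x(\cdot)\|\in L^1\cap L^\infty$ and $\sigma$-strong$^*$ continuity make
\[
x^*x=\int\!\!\int x(r)^*\,\lambda(t-r)\,x(t)\,dr\,dt
\]
a $\sigma$-weakly convergent double integral in $M$. Since $\theta_s^\varphi|_N=\mathrm{id}_N$ and $\theta_s^\varphi(\lambda(u))=e^{isu}\lambda(u)$, one gets
\[
\theta_s^\varphi(x^*x)=\int\!\!\int e^{is(t-r)}\,x(r)^*\,\lambda(t-r)\,x(t)\,dr\,dt.
\]

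Set $Z:=\int x(t)^*x(t)\,dt\in N_+$ (bounded by $\|x\|_{L^1}\|x\|_{L^\infty}$) and $Y_c:=\int_{-c}^{c}\theta_s^\varphi(x^*x)\,ds$. Since $\theta_s^\varphi(x^*x)\ge 0$, the net $(Y_c)_{c>0}$ is monotone increasing in $M_+$. It is therefore enough to show, for every $\omega\in M_*^+$, that $\omega(Y_c)\to 2\pi\,\omega(Z)$ as $c\to\infty$: uniform boundedness will then yield $\sup_c\|Y_c\|<\infty$, hence $x^*x\in\mathfrak m$, and $\sigma$-weak monotone convergence will give $P(x^*x)=2\pi Z$.

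Fix $\omega\in M_*^+$. Applying $\omega$, interchanging by Fubini (the integrand has absolute value at most $\|\omega\|\,\|x(r)\|\,\|x(t)\|\in L^1(\mathbb{R}^2)$), and substituting $u=t-r$ produces $\omega(\theta_s^\varphi(x^*x))=\int e^{isu}\phi(u)\,du$ with
\[
\phi(u):=\int\omega\bigl(x(r)^*\lambda(u)\,x(r+u)\bigr)\,dr.
\]
The estimate $\|\phi\|_1\le\|\omega\|\,\|x\|_{L^1}^2$ shows $\phi\in L^1(\mathbb{R})$, and dominated convergence (using $\sigma$-strong$^*$ continuity of $x(\cdot)$) gives continuity of $\phi$ at $0$ with $\phi(0)=\omega(Z)$. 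Another application of Fubini yields
\[
\omega(Y_c)=\int\frac{2\sin(cu)}{u}\,\phi(u)\,du=\sqrt{2\pi}\int_{-c}^{c}\check\phi(s)\,ds,
\]
where $\check\phi(s):=(2\pi)^{-1/2}\int e^{isu}\phi(u)\,du\ge 0$, the non-negativity coming for free from $\omega\circ\theta_s^\varphi(x^*x)\ge 0$.

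It remains to identify $\int\check\phi=\sqrt{2\pi}\,\phi(0)$. Test $\check\phi$ against the increasing Gaussian family $g_n(s):=e^{-s^2/(2n^2)}\nearrow 1$: the elementary duality $\int g_n\check\phi=\int \hat g_n\,\phi$, with $\hat g_n(u)=n\,e^{-n^2u^2/2}$ an approximate identity of total mass $\sqrt{2\pi}$, gives $\int g_n\check\phi\to\sqrt{2\pi}\,\phi(0)$, and monotone convergence identifies the limit with $\int\check\phi$. Consequently $\omega(Y_c)\to 2\pi\,\phi(0)=2\pi\,\omega(Z)$, as required. The main obstacle is that the Dirichlet kernel $2\sin(cu)/u$ is not an approximate identity in any $L^p$ norm, so the limit $\int\frac{2\sin(cu)}{u}\phi(u)\,du\to 2\pi\phi(0)$ cannot be read off from $\phi\in L^1\cap C(\mathbb{R})$ alone; the plan sidesteps this by exploiting the automatic positivity $\check\phi\ge 0$ to replace the delicate Fourier inversion by the clean Gaussian monotone-convergence step above.
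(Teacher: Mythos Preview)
Your proof is correct and rests on the same idea as the paper's: after expanding $x^*x$ as a double integral, both arguments recognize that $\omega(\theta_s^{\varphi}(x^*x))$ is (up to normalization) the Fourier transform of $\phi(u)=\int\omega(x(r)^*\lambda(u)x(r+u))\,dr$, and then recover $2\pi\phi(0)$ via a Gaussian approximate-identity combined with monotone convergence exploiting $\theta_s^{\varphi}(x^*x)\ge 0$. The only difference is organizational: the paper smears $\int\theta_s^{\varphi}(x^*x)\,ds$ with the Gaussian $f_n(s)=e^{-s^2/(4n)}\nearrow 1$ from the outset and never meets the Dirichlet kernel, whereas you first take the sharp cutoff $[-c,c]$, obtain the Dirichlet kernel, and then invoke the very same Gaussian duality on $\check\phi$ to circumvent it --- so your Gaussian step is essentially the paper's whole proof, and the Dirichlet detour could be dropped.
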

 \begin{proof}
 Note first, that
 \begin{align*}
x^*x&=\iint_{\mathbb{R}^2}x(s)^*\lambda(t-s)x(t)\,dsdt\notag \\
&=\iint_{\mathbb{R}^2}x(s)^*\lambda(t)x(s+t)\,dsdt.
\end{align*}
Put $f_n(s)=e^{-s^2/(4n)}\ (s\in \mathbb{R})$, and 
\begin{align*}
g_n(t)=\frac{1}{2\pi}\int_{-\infty}^{\infty}f_n(s)e^{-its}\,ds=\left (\frac{n}{\pi
}\right )^{\frac{1}{2}}e^
{-nt^2}\ \ (t\in \mathbb{R}).
\end{align*}
Using that $\theta_s^{\varphi}(y)=y\ (s\in \mathbb{R}, y\in N)$, $\theta_s^{\varphi}(\lambda(t))=e^{-ist}\lambda(t)$\ $(s,t\in \mathbb{R})$\footnote{Typewriter's note: Haagerup used the convention $\theta_s^{\varphi}(\lambda(t))=e^{ist}\lambda(t)$. However, since the negative sign convention is widely accepted, we decided to change the definition.} and the Fubini Theorem, we have for every $\psi\in M_*$,
\begin{align*}
\nai{\psi}{\int_{-\infty}^{\infty}\theta_u^{\varphi}(x^*x)f_n(u)\,du}&=\int_{-\infty}^{\infty}\int_{-\infty}^{\infty}\int_{-\infty}^{\infty}e^{-itu}f_n(u)\psi(x(s)^*\lambda(t)x(s+t))\,dsdtdu\notag \\
&=\int_{-\infty}^{\infty}g_n(t)\left (\int_{-\infty}^{\infty}2\pi \psi(x(s)^*\lambda(t)x(s+t))ds\right )\,dt.
%&=2\pi \int_{-\infty}^{\infty}\left (\int_{-\infty}^{\infty}x(s)^*\textcolor{red}{\lambda(t)}x(s+t)ds\right )g_n(t)dt.
\end{align*}

Since $t\mapsto \int_{-\infty}^{\infty}\psi(x(s)^*\lambda(t)x(s+t))ds$ is in $C_0(\mathbb{R})$ and $g_n\stackrel{n\to \infty}{\to}\delta_0$ (weak$^*$ in $C_0(\mathbb{R})$), we have 
%$(g_n)_{n=1}^{\infty}$ forms an approximate unit on $\mathbb{R}$, we have  
\begin{equation*}
\lim_{n\to \infty}\nai{\psi}{\int_{-\infty}^{\infty}\theta_u^{\varphi}(x^*x)f_n(u)\,du}=\nai{\psi}{2\pi \int_{-\infty}^{\infty}x(s)^*x(s)\,ds}.
\end{equation*}
Since $\psi\in M_*$ is arbitrary, $\theta_n(x^*x)\ge 0$ and $f_n\nearrow 1$ uniformly on compact sets, it follows that 
\begin{equation*}
\lim_{n\to \infty}\int_{-\infty}^{\infty}\theta_u^{\varphi}(x^*x)f_n(u)du=2\pi \int_{-\infty}^{\infty}x(s)^*x(s)\,ds\ \ \ \ (\sigma{\rm{-strongly}}).
\end{equation*}
Therefore $x^*x\in \mathfrak{m}$, and $\displaystyle P(x^*x)=2\pi \int_{-\infty}^{\infty}x(t)^*x(t)\,dt.$ 
 \end{proof}
\begin{lemma}\label{lem: UH3.3}
Let $a$ be the (unbounded) self-adjoint operator affiliated with $M$ for which $\exp (ita)=\lambda (t)\ (t\in \mathbb{R})$ holds. 
Let $\alpha>0$, and let $e_{\alpha}$ be the spectral projection of the operator $a$ corresponding to the interval $[0,\alpha]$. Then for each $x\in N$, one has $e_{\alpha}xe_{\alpha}\in \mathfrak{m}$ and 
\begin{equation}
P(e_{\alpha}xe_{\alpha})=\int_{-\infty}^{\infty}\sigma_t^{\varphi}(x)\frac{1-\cos \alpha t}{\pi t^2}\,dt,\ \ \ \ \ x\in N.\label{eq: UH3.3}
\end{equation}
\end{lemma}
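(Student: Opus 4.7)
The plan is to reduce to $x \in N_{+}$ (both sides of \eqref{eq: UH3.3} being linear in $x$) and apply Lemma~\ref{lem: UH3.2} to $y := x^{1/2}e_{\alpha}$, so that $y^{*}y = e_{\alpha}xe_{\alpha}$. With $f := \chi_{[0,\alpha]}$, Fourier inversion gives the \emph{formal} identity $e_{\alpha} = (2\pi)^{-1/2}\int \hat{f}(t)\lambda(t)\,dt$; combining this with $x^{1/2}\lambda(t) = \lambda(t)\sigma_{-t}^{\varphi}(x^{1/2})$ would represent $y$ as $\int \lambda(t)x(t)\,dt$ with $x(t) = (2\pi)^{-1/2}\hat{f}(t)\sigma_{-t}^{\varphi}(x^{1/2})$, and Lemma~\ref{lem: UH3.2} would produce $P(y^{*}y) = \int |\hat{f}(t)|^{2}\sigma_{-t}^{\varphi}(x)\,dt$. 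Since $|\hat{f}(t)|^{2} = (1-\cos\alpha t)/(\pi t^{2})$ is even in $t$, the change of variable $t \mapsto -t$ delivers the claimed formula. The obstruction is that $\hat{f}$ only decays like $1/|t|$, so $\hat{f} \notin L^{1}(\mathbb{R})$ and the hypotheses of Lemma~\ref{lem: UH3.2} fail for the putative $x(t)$.

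To bypass this I would regularize by a Gaussian mollifier: let $\rho_{n}(s) := \sqrt{n/\pi}\,e^{-ns^{2}}$ and set $h_{n} := f * \rho_{n}$. Then $0 \le h_{n} \le 1$, $h_{n} \to f$ pointwise off $\{0,\alpha\}$, and $\hat{h}_{n}(t) = \hat{f}(t)\,e^{-t^{2}/(4n)}$ lies in $L^{1}\cap L^{\infty}\cap C(\mathbb{R})$ with $|\hat{h}_{n}|^{2} \nearrow |\hat{f}|^{2}$. Now $h_{n}(a) = (2\pi)^{-1/2}\int \hat{h}_{n}(t)\lambda(t)\,dt$ is a genuine Bochner integral in $M$, so setting $y_{n} := x^{1/2}h_{n}(a)$ and $x_{n}(t) := (2\pi)^{-1/2}\hat{h}_{n}(t)\sigma_{-t}^{\varphi}(x^{1/2})$ one obtains $y_{n} = \int \lambda(t)x_{n}(t)\,dt$ with $t \mapsto x_{n}(t)$ being $\sigma$-strong$^{*}$ continuous and $\|x_{n}(\cdot)\|$ in $L^{1}\cap L^{\infty}$. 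Lemma~\ref{lem: UH3.2} then gives $y_{n}^{*}y_{n} = h_{n}(a)\,x\,h_{n}(a) \in \mathfrak{m}$ and
\[
 P(y_{n}^{*}y_{n}) \;=\; \int |\hat{h}_{n}(t)|^{2}\,\sigma_{-t}^{\varphi}(x)\,dt.
\]

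It remains to pass to the limit $n \to \infty$. The right-hand side increases to $\int |\hat{f}(t)|^{2}\sigma_{-t}^{\varphi}(x)\,dt$ by monotone convergence. For the left-hand side I would test against an arbitrary $\omega \in N_{*}^{+}$ and work in the GNS Hilbert space of the dual weight $\tilde{\omega} := \omega \circ P$ on $M$. The elementary identity
\[
 P(g(a)) \;=\; \|g\|_{1}\cdot 1_{N} \qquad \bigl(g \in L^{1}_{+}(\mathbb{R})\bigr),
\]
which follows at once from $P(g(a)) = \int g(a-s)\,ds$, produces the quantitative estimate $\|\Lambda_{\tilde{\omega}}(h_{n}(a) - e_{\alpha})\|^{2} = \omega(1)\,\|h_{n} - f\|_{2}^{2} \to 0$. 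Hence $\Lambda_{\tilde{\omega}}(h_{n}(a)) \to \Lambda_{\tilde{\omega}}(e_{\alpha})$ in the GNS space, and continuity of the bilinear form $(\xi,\eta) \mapsto \langle x\xi,\eta\rangle$ gives $\tilde{\omega}(y_{n}^{*}y_{n}) \to \tilde{\omega}(e_{\alpha}xe_{\alpha})$. Matching this with the monotone limit on the right forces $\omega(P(e_{\alpha}xe_{\alpha})) = \omega\bigl(\int |\hat{f}(t)|^{2}\sigma_{-t}^{\varphi}(x)\,dt\bigr)$ for every $\omega \in N_{*}^{+}$, establishing \eqref{eq: UH3.3}.

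The main obstacle is the non-integrability of $\hat{f}$, which both prevents a one-shot application of Lemma~\ref{lem: UH3.2} and makes the final passage to the limit subtle: mere $\sigma$-strong convergence $y_{n}^{*}y_{n} \to e_{\alpha}xe_{\alpha}$ combined with normality of $P$ would only yield one of the two inequalities, so the sharper $L^{2}(M,\tilde{\omega})$-control via the identity $P(g(a)) = \|g\|_{1}\cdot 1_{N}$ is essential.
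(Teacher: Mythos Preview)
Your argument is correct and follows essentially the same strategy as the paper: approximate $e_\alpha=f(a)$ by $h_n(a)$ with $h_n$ having well-behaved Fourier transform, apply Lemma~\ref{lem: UH3.2} at the approximate level, and pass to the limit. The paper uses trapezoidal approximants $g_n$ (piecewise linear, supported in $[0,\alpha]$, with $g_n=\hat f_n$ for $f_n\in L^1\cap L^\infty\cap C$) rather than your Gaussian mollification $h_n=f*\rho_n$, but this is an inessential difference; both yield $|\hat h_n|^2\to|\hat f|^2$ in $L^1$ (equivalently $h_n\to f$ in $L^2$).

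The one point worth noting is your final remark that ``mere $\sigma$-strong convergence $y_n^*y_n\to e_\alpha x e_\alpha$ combined with normality of $P$ would only yield one of the two inequalities.'' This undersells the paper's route. Once one knows $e_\alpha\in\mathfrak m$ (which your own identity $P(g(a))=\|g\|_1\cdot 1$ with $g=\chi_{[0,\alpha]}$ gives immediately, as does the paper's Plancherel computation $P(g_n(a)^2)=\|g_n\|_2^2\cdot 1\le\alpha$), the entire corner $e_\alpha M e_\alpha$ lies in $\mathfrak m$ and $P|_{e_\alpha M e_\alpha}$ is a bounded normal positive map, hence $\sigma$-weakly continuous in both directions. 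So the paper simply invokes $P(e_\alpha x e_\alpha)=\lim_n P(g_n(a)xg_n(a))$ directly, without the weight-by-weight GNS argument. Your $L^2(M,\tilde\omega)$ approach is a valid alternative that reproves this continuity by hand, but it is not strictly necessary.
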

\begin{proof}
It is sufficient to consider the case $x\ge 0$, so we can assume that $x=y^*y\ (y\in N)$. 
For $f\in L^1(\mathbb{R})\cap L^{\infty}(\mathbb{R})\cap C(\mathbb{R})$, we have
\begin{align*}
y\hat{f}(a)=\frac{1}{\sqrt{2\pi}}\int_{-\infty}^{\infty}y\lambda(-t)f(t)dt&=\frac{1}{\sqrt{2\pi}}\int_{-\infty}^{\infty}\lambda(-t)\sigma_{t}^{\varphi}(y)f(t)\,dt\notag\\
&=\frac{1}{\sqrt{2\pi}}\int_{-\infty}^{\infty}\lambda(t)\sigma_{-t}^{\varphi}(y)f(-t)\,dt.
\end{align*}
Hence by Lemma \ref{lem: UH3.2}, $\hat{f}(a)^*x\hat{f}(a)\in \mathfrak{m}$, and 
\begin{equation*}
P(\hat{f}(a)^*x\hat{f}(a))=\int_{-\infty}^{\infty}\sigma_{t}^{\varphi}(x)|f(t)|^2\,dt.
\end{equation*}
For $n>\frac{2}{\alpha}$, let $g_n$ be the continuous function on $\mathbb{R}$ for which 
\begin{align*}
g_n(t)&=0,\ \ \ \ t\le 0,\ t\ge \alpha,\\
g_n(t)&=1,\ \ \ \ t\in [\tfrac{1}{n},\alpha-\tfrac{1}{n}],
\end{align*}
and for which the graph is a straight line on $[0,\frac{1}{n}]$ and $[\alpha-\frac{1}{n},\frac{1}{n}]$. Since $a$ has no point spectrum, 
$g_n(a)\nearrow e_{\alpha}\ (n\to \infty)$. It is elementary to check that each $g_n$ is of the form $g_n=\hat{f}_n$ for a function $f_n\in L^1(\mathbb{R})\cap L^{\infty}(\mathbb{R})\cap C(\mathbb{R})$ (use, for example, the fact that $g_n=n1_{[0,\tfrac{1}{n}]}*1_{[0,\alpha-\frac{1}{n}]}$). Hence $g_n(a)^2\in \mathfrak{m}$, and by the Plancherel Theorem, we get 
\begin{equation*}
P(g_n(a)^2)=P(\hat{f}_n(a)^*\hat{f}_n(a))=\|f_n\|_2^21=\|\hat{f}_n\|_2^21. 
\end{equation*}
Since $\sup_n \|\hat{f}_n\|_2^2=\alpha<\infty$, we have $e_{\alpha}\in \mathfrak{m}$ and $P(e_{\alpha})=\alpha 1$. Therefore $e_{\alpha}Me_{\alpha}\subseteq \mathfrak{m}$, and the restriction of $P$ to $e_{\alpha}Me_{\alpha}$ is a positive normal map. Hence for $x\in N$, 
\begin{equation*}
P(e_{\alpha}xe_{\alpha})=\lim_{n\to \infty}P(g_n(a)xg_n(a))=\lim_{n\to \infty}\int_{-\infty}^{\infty}\sigma_{t}^{\varphi}(x)|f_n(t)|^2\,dt\ \ \ (\sigma \text{-strongly}).
\end{equation*}
Since $\|g_n-1_{[0,\alpha]}\|_2\stackrel{n\to \infty}{\to}0$, it follows that $f_n$ converges in $L^2(\mathbb{R})$ to the function 
\begin{align*}
f(t)&=\frac{1}{\sqrt{2\pi}}\int_{-\infty}^{\infty}1_{[0,\alpha]}(s)e^{ist}\,ds\notag \\
&=-\frac{i}{t\sqrt{2\pi}}(e^{i\alpha t}-1).
\end{align*}
Hence $|f_n|^2\stackrel{n\to \infty}{\to}|f|^2$ in $L^1(\mathbb{R})$, with $|f|^2(t)=\frac{1}{\pi t^2}(1-\cos \alpha t)\ (t\in \mathbb{R})$. Therefore (\ref{eq: UH3.3}) holds. 
%\begin{equation*}
%P(e_{\alpha}xe_{\alpha})=\int_{-\infty}^{\infty}\sigma_{t}^{\varphi}(x)\frac{1}{\pi t^2}(1-\cos \alpha t)dt.\label{eq: P(exe) last}
%\end{equation*}
\end{proof}

\begin{lemma}\label{lem: UH3.4}
Let $N$ be an injective factor of type {\rm{III}}$_1$ with separable predual, $\varphi $ be a faithful normal state on $N$ and let $R$ be the hyperfinite {\rm{II}}$_1$ factor with tracial state $\tau$. For every finite set $u_1,\ldots,u_n$ of unitaries in $N$ and every $\varepsilon>0$, there exist $x_1,\ldots,\ x_n$ in the unit ball of $R$, a normal unital completely positive map $T\colon R\to N$, such that $\psi=\varphi\circ T$ is a normal faithful state on $R$, and 
\begin{align}
\sigma_t^{\varphi}\circ T&=T\circ \sigma_t^{\psi},\ \ \ \ \ \ \ t\in \mathbb{R},\label{eq: 3.4-1}\\
\|T(x_k)-u_k\|_{\varphi}&<\varepsilon,\ \ \ \ \ \ \ \ \ \ k=1,\ldots,n\label{eq: 3.4-2}.
\end{align} 
Moreover, the spectrum of $h={\rm{d}}\psi/{\rm{d}}\tau$ is a closed interval $[\lambda_1,\lambda_2],\ 0<\lambda_1<\lambda_2<\infty$, and $h$ has no eigenvalues.  
\end{lemma}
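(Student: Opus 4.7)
The plan is to factor $\mathrm{id}_N$ approximately through a large finite corner of the continuous core $M = N \rtimes_{\sigma^{\varphi}} \mathbb{R}$. Since $N$ is injective III$_1$ and $\mathbb{R}$ is amenable, $M$ is an injective factor of type II$_\infty$. Fix $\alpha > 0$ and let $e_\alpha$ be the spectral projection of $a$ on $[0,\alpha]$. By Lemma \ref{lem: UH3.3}, $P(e_\alpha) = \alpha \cdot 1$, so the dual weight $\tilde{\varphi} = \varphi \circ P$ satisfies $\tilde{\varphi}(e_\alpha) = \alpha$, making $e_\alpha M e_\alpha$ a finite von Neumann algebra, in fact a II$_1$ factor (as $M$ is a factor). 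Being injective with separable predual, $e_\alpha M e_\alpha \cong R$, and I fix such a $*$-isomorphism $\pi : R \to e_\alpha M e_\alpha$.

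Next, I would define $T_\alpha : e_\alpha M e_\alpha \to N$ by $T_\alpha(y) := \alpha^{-1} P(y)$. Since operator-valued weights are completely positive on their natural domain and $T_\alpha(e_\alpha) = 1$, this is a normal unital completely positive map, and $\psi_\alpha := \varphi \circ T_\alpha = \alpha^{-1} \tilde{\varphi}|_{e_\alpha M e_\alpha}$ is a faithful normal state. The modular equivariance $\sigma_t^{\varphi} \circ T_\alpha = T_\alpha \circ \sigma_t^{\psi_\alpha}$ then follows from three standard facts: $\sigma_t^{\tilde{\varphi}}|_N = \sigma_t^{\varphi}$; $\sigma_t^{\tilde{\varphi}}$ fixes each $\lambda(s)$ and therefore commutes with spectral projections of $a$ (so $\sigma_t^{\psi_\alpha} = \sigma_t^{\tilde{\varphi}}|_{e_\alpha M e_\alpha}$); and $P \circ \sigma_t^{\tilde{\varphi}} = \sigma_t^{\varphi} \circ P$ on the domain of $P$. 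Pulling back by $\pi$, I set $T := T_\alpha \circ \pi : R \to N$ and $\psi := \psi_\alpha \circ \pi$, which inherits all the required properties.

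To obtain the approximation (\ref{eq: 3.4-2}), I apply Lemma \ref{lem: UH3.3} directly to each unitary $u_k \in N$ to get
\[
T_\alpha(e_\alpha u_k e_\alpha) = \int_{\mathbb{R}} \sigma_t^{\varphi}(u_k)\, \rho_\alpha(t)\, dt, \qquad \rho_\alpha(t) := \frac{1-\cos \alpha t}{\pi \alpha t^2},
\]
and note that $\rho_\alpha$ is a probability density on $\mathbb{R}$ (a Fej\'er-type kernel) concentrating at $0$ as $\alpha \to \infty$. Since $t \mapsto \sigma_t^{\varphi}(u_k)$ is $\|\cdot\|_{\varphi}$-continuous at $t=0$, the triangle inequality for vector-valued integrals gives $\|T_\alpha(e_\alpha u_k e_\alpha) - u_k\|_{\varphi} \to 0$ as $\alpha \to \infty$. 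I then choose $\alpha$ large enough that this is $< \varepsilon$ for all $k$, and take $x_k := \pi^{-1}(e_\alpha u_k e_\alpha) \in R$; since $\pi^{-1}$ is isometric and $\|e_\alpha u_k e_\alpha\| \leq 1$, these lie in the unit ball of $R$.

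For the final claim, the tracial state $\tau$ on $R$ corresponds under $\pi$ to the unique tracial state on the II$_1$ factor $e_\alpha M e_\alpha$, i.e.\ a positive multiple of $\tau_M|_{e_\alpha M e_\alpha}$, where $\tau_M$ is the canonical trace on $M$. Since the dual weight satisfies the standard Radon--Nikodym identity $d\tilde{\varphi}/d\tau_M = e^{-a}$, we conclude that $h = \pi^{-1}(c\, e^{-a} e_\alpha)$ for some $c > 0$. Because $a$ has no point spectrum (observed in the proof of Lemma \ref{lem: UH3.3}), $a e_\alpha$ has continuous spectrum $[0,\alpha]$, so $h$ has continuous spectrum equal to the closed interval $[c e^{-\alpha}, c]$ and no eigenvalues, as required. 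The main subtleties I expect are the justification of the modular equivariance through operator-valued weight theory and the explicit Radon--Nikodym identity driving the spectral calculation; both are standard in the crossed product framework but require careful referencing.
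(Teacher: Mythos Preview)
Your approach is essentially identical to the paper's: the same corner $e_\alpha M e_\alpha$ of the continuous core, the same map $T_\alpha = \alpha^{-1} P$, the same Fej\'er-kernel argument for (\ref{eq: 3.4-2}), the same modular-equivariance reasoning, and the same spectral computation for $h$. One small slip: since $\tau_M = \tilde{\varphi}(e^{-a}\,\cdot\,)$, the Radon--Nikodym derivative is $d\tilde{\varphi}/d\tau_M = e^{a}$ (not $e^{-a}$), so $h$ is a positive multiple of $e^{a} e_\alpha$ with spectrum a subinterval of $[1, e^\alpha]$; this does not affect your conclusion.
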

\begin{proof}\footnote{Typewriter's note: Since some pages were missing from the original notes, we could not find all parts of the proofs of Lemma \ref{lem: UH3.4} and Theorem \ref{thm: UH3.1}. We include the following proof for the reader's convenience.}
Let $M=N\rtimes_{\sigma^{\varphi}}\mathbb{R}$. 
By \cite{Takesaki73-2}, $M$ has a normal faithful semifinite trace $\tau$, such that 
\begin{equation*}
\tau\circ \theta_s^{\varphi}=e^{-s}\tau\ \ (s\in \mathbb{R}).
\end{equation*}
The trace $\tau$ can be constructed in the following way: Let $\tilde{\varphi}$ be the dual weight of $\varphi$ on $M$ (cf. \cite{Haagerup79}). Let $a$ be the self-adjoint operator for which $\exp (ita)=\lambda (t)\ (t\in \mathbb{R})$. Then $a$ is affiliated with the centralizer $M_{\tilde{\varphi}}$ of $\tilde{\varphi}$ and 
\begin{equation*}
\tau=\tilde{\varphi}(e^{-a}\ \cdot\ )
\end{equation*}
in the sense of Pedersen-Takesaki \cite{PedersenTakesaki73}. By \cite{Haagerup79-2}, $\tilde{\varphi}$ is on the subspace $\mathfrak{m}$ given by 
\begin{equation*}
\tilde{\varphi}(x)=\varphi\circ P(x),\ \ \ \ \ x\in \mathfrak{m}.
\end{equation*}
Let $\alpha>0$, and let $e_{\alpha}=1_{[0,\alpha]}(a)$. Then by Lemma \ref{lem: UH3.3}, $e_{\alpha}\in \mathfrak{m}$ and $P(e_{\alpha})=\alpha 1$. Hence $\tilde{\varphi}|_{e_{\alpha}Me_{\alpha}}$ is a positive normal functional, and $\tilde{\varphi}(e_{\alpha})=\alpha$. Finally, 
\begin{equation*}
\tau(e_{\alpha})=\tilde{\varphi}(e^{-a}e_{\alpha})=\int_0^{\alpha}e^{-t}\,dt=1-e^{-\alpha}<\infty,
\end{equation*}
because
\begin{equation*}
e^{-a}e_{\alpha}=\int_0^{\alpha}e^{-t}\,de(\lambda),
\end{equation*}
where 
\begin{equation*}
a=\int_{-\infty}^{\infty}\lambda\,de(\lambda)
\end{equation*}
is the spectral resolution of $a$, and $d\tilde{\varphi}(e(\lambda))=d\lambda$. 

Since $N$ is of type III$_1$, $M$ is a type II$_{\infty}$ factor, and therefore $e_{\alpha}Me_{\alpha}$ is a II$_1$ factor. Moreover, the injectivity of $N$ implies that $M$ is also injective, so that $e_{\alpha}Me_{\alpha}$ is isomorphic to the hyperfinite factor $R$ of type II$_1$ by \cite{Connes76}.\\ \\
\textbf{Claim.} For any $x\in N$, we have 
\begin{equation*}
\lim_{\alpha \to \infty}\left \|\frac{1}{\alpha}P(e_{\alpha}xe_{\alpha})-x\right \|_{\varphi}=0. 
\end{equation*}
This follows from a basic property of the Fej\'er kernel (see e.g., \cite[Chapters I and VI]{Katznelson68}), but we include the proof for completeness. Let $\varepsilon>0$. Choose $t_0>0$ small enough so that $\|\sigma_t^{\varphi}(x)-x\|_{\varphi}\le \varepsilon$ for all $t\in [-t_0,t_0]$. Moreover, by $\displaystyle \left |\frac{1-\cos (\alpha t)}{\pi \alpha t^2}\right |\le \frac{2}{\pi \alpha}\cdot \frac{1}{t^2}$, we have 
\begin{equation*}
\lim_{\alpha \to \infty}\int_{|t|\ge t_0}\frac{1-\cos (\alpha t)}{\pi \alpha t^2}\,dt=0.
\end{equation*} 
By Lemma \ref{lem: UH3.3}, we have 
\begin{align*}
\limsup_{\alpha \to \infty}\left \|\frac{1}{\alpha}P(e_{\alpha}xe_{\alpha})-x\right \|_{\varphi}&\le \limsup_{\alpha \to \infty}\int_{-\infty}^{\infty}\|\sigma_t^{\varphi}(x)-x\|_{\varphi}\frac{1-\cos (\alpha t)}{\pi \alpha t^2}\,dt\notag \\
&\le \varepsilon+2\|x\|_{\varphi}\limsup_{\alpha \to \infty}\int_{|t|\ge t_0}\frac{1-\cos (\alpha t)}{\pi \alpha t^2}\,dt\notag \\
&=\varepsilon.
\end{align*}
Since $\varepsilon>0$ is arbitrary, we obtain the conclusion.\\
\medskip 
Let $n\ge 1$, $u_1,\ldots,u_n\in \mathcal{U}(N)$ and $\varepsilon,\delta>0$ be given. By the above claim, we may choose $\alpha>0$ large enough so that 
\begin{equation}
\left \|\frac{1}{\alpha}P(e_{\alpha}u_ke_{\alpha})-u_k\right \|_{\varphi}<\varepsilon,\ \ \ \ \ \ 1\le k\le n.\label{eq: alpha^{-1}P}
\end{equation}
Define $T:=\alpha^{-1} P |_{e_\alpha M e_\alpha}\colon R=e_{\alpha}Me_{\alpha}\to N$ and 
\begin{equation*}
\psi:=\varphi \circ T=\frac{1}{\alpha}\varphi\circ P(e_{\alpha} \cdot e_{\alpha})=\frac{1}{\alpha}\tilde{\varphi}(e_{\alpha} \cdot e_{\alpha}).
\end{equation*}
Then $T$ is a normal unital completely positive map, and $\psi$ is a normal faithful state on $R$. By (\ref{eq: alpha^{-1}P}) we have 
\begin{equation*}
\|T(x_k)-u_k\|_{\varphi}<\varepsilon,\ \ \ \ \ 1\le k\le n,
\end{equation*}
where $x_k:=e_{\alpha}u_ke_{\alpha}\ (1\le k\le n)$ are in the unit ball of $R$. 
Moreover, since $e_{\alpha}\in M_{\tilde{\varphi}}$ and since $\sigma_t^{\varphi}\circ P=P\circ \sigma_t^{\tilde{\varphi}}\ (t\in \mathbb{R})$, we have 
$\sigma_t^{\varphi}\circ T=T\circ \sigma_t^{\psi}\ (t\in \mathbb{R})$. By construction, we have 
\begin{equation*}
h:=\frac{\text{d}{\psi}}{\text{d}\tau}=\frac{1-e^{-\alpha}}{\alpha}\exp (a)e_{\alpha},
\end{equation*} which has no atoms and the spectrum of $h$ is a closed bounded interval in $\mathbb{R}_+^*=(0,\infty)$. 
\end{proof}

\begin{lemma}\label{lem: inclusion of spectrum}
Let $B\subset A$ be an inclusion of unital ${\rm{C}}^{\ast}$-algebras and $E\colon A\to B$ be a unital completely positive map. Let $h\in A$ be a self-adjoint element with $\sigma(h)\subset [\lambda_1,\lambda_2]$, where $\sigma(\ \cdot\ )$ denotes the spectrum and $\lambda_1<\lambda_2$ are reals.  
Then $\sigma(E(h))\subset [\lambda_1,\lambda_2]$. 
\end{lemma}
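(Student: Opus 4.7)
The statement is a standard spectral/order fact about unital positive maps, so the plan is short. The idea is to pass from the spectral inclusion $\sigma(h)\subset[\lambda_1,\lambda_2]$ to the operator inequality $\lambda_1\cdot 1_A\le h\le \lambda_2\cdot 1_A$, then apply $E$ to transport this inequality to $B$, and finally read off the spectral inclusion $\sigma(E(h))\subset[\lambda_1,\lambda_2]$.

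More concretely, first I would observe that for any self-adjoint element $a$ in a unital C$^\ast$-algebra, $\sigma(a)\subset[\alpha,\beta]$ is equivalent to the two-sided operator inequality $\alpha\cdot 1\le a\le \beta\cdot 1$. This is a consequence of the continuous functional calculus applied to the functions $t\mapsto t-\alpha$ and $t\mapsto \beta-t$, both of which are nonnegative on $\sigma(a)$ and hence yield positive elements of the C$^\ast$-algebra. Applied to $h\in A$, this gives $\lambda_1\cdot 1_A\le h\le \lambda_2\cdot 1_A$.

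Next, since $E$ is unital and completely positive, it is in particular positive and satisfies $E(1_A)=1_B$. Applying $E$ to the inequality above and using linearity yields
\[
\lambda_1\cdot 1_B=E(\lambda_1\cdot 1_A)\le E(h)\le E(\lambda_2\cdot 1_A)=\lambda_2\cdot 1_B.
\]
Moreover, $E(h)$ is self-adjoint because $h$ is self-adjoint and positive maps preserve self-adjointness (being $\ast$-preserving on self-adjoint elements, as $E(h^\ast)=E(h)^\ast$ follows from $E$ mapping positives to positives and decomposing a self-adjoint element as a difference of positives).

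Finally, invoking the same equivalence in the reverse direction inside $B$, the operator inequality $\lambda_1\cdot 1_B\le E(h)\le \lambda_2\cdot 1_B$ together with self-adjointness of $E(h)$ gives $\sigma_B(E(h))\subset[\lambda_1,\lambda_2]$. Since $B\subset A$ is a unital inclusion of C$^\ast$-algebras, spectra can only shrink when passing to the larger algebra, so $\sigma_A(E(h))\subset \sigma_B(E(h))\subset[\lambda_1,\lambda_2]$, completing the argument. There is no real obstacle here; the only thing to be a bit careful about is distinguishing the spectrum of $E(h)$ computed in $B$ versus in $A$, but the inclusion goes the right way for the conclusion.
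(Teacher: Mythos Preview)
Your proof is correct and in fact more streamlined than the paper's. The paper argues pointwise in the spectral parameter: for $\lambda<\lambda_1$ it writes $1=(h-\lambda)^{1/2}xx^*(h-\lambda)^{1/2}$ with $x=(h-\lambda)^{-1/2}$, applies $E$, and bounds the result by $\|x\|^2E(h-\lambda)$ to conclude $E(h)-\lambda\ge \|x\|^{-2}1$ is invertible; similarly for $\lambda>\lambda_2$. Your approach instead uses directly the equivalence $\sigma(h)\subset[\lambda_1,\lambda_2]\Leftrightarrow \lambda_1\cdot 1\le h\le \lambda_2\cdot 1$ and pushes the inequality through the unital positive map $E$ in one step. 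Both routes rely only on positivity and unitality of $E$ (complete positivity is not actually needed), but yours avoids the auxiliary factorization and is the standard textbook argument. One minor remark: for self-adjoint elements, spectral permanence in unital C$^\ast$-subalgebras gives $\sigma_B(E(h))=\sigma_A(E(h))$, so your careful distinction between the two spectra, while harmless, is not strictly necessary here.
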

\begin{proof}
Let $\lambda<\lambda_1$. Then $h-\lambda$ is positive and invertible. Take a nonzero $x\in A$ such that $(h-\lambda)^{\frac{1}{2}}x=1$, so that 
$E((h-\lambda)^{\frac{1}{2}}xx^*(h-\lambda)^{\frac{1}{2}})=1$. The left hand side is dominated by $\|x\|^2E(h-\lambda)$, whence 
$E(h-\lambda)\ge \|x\|^{-2}1$, showing that $E(h)-\lambda 1$ is invertible. Thus $\lambda\notin \sigma(E(h))$. Similarly, $\sigma(E(h))\cap (\lambda_2,\infty)=\emptyset$ holds. Therefore $\sigma(E(h))\subset [\lambda_1,\lambda_2]$.  
\end{proof}
\begin{proof}[Proof of Theorem \ref{thm: UH3.1}]
We may assume that $0<\varepsilon<1$. 
By Lemma \ref{lem: UH3.4}, there exist a normal unital completely positive map $T\colon R\to N$ and $x_1,\ldots, x_n$ in the unit ball of $R$ satisfying 
$\sigma_t^{\varphi}\circ T=T\circ \sigma_t^{\psi},\ (t\in \mathbb{R})$, where $\psi:=\varphi\circ T\colon R\to N$ and 
\begin{equation}
\|T(x_k)-u_k\|_{\varphi}<\frac{\varepsilon^2}{16},\ \ \ \ \ 1\le k\le n.\label{eq: 0.1}
\end{equation}
Let $h:=\text{d}\psi/\text{d}\tau\in R_+$. Then by Lemma \ref{lem: UH3.4}, $\sigma(h)=[\lambda_1,\lambda_2]$ for some positive reals $\lambda_1<\lambda_2$ and $h$ does not have a point spectrum. 
Since $\log (\ \cdot\ )$ is continuous on $[\lambda_1,\lambda_2]$, continuous functional calculus guarantees that there exists $\delta'>0$ such that for all $a,b\in R_+$, we have the following implication
\begin{equation}
\sigma(a),\sigma(b)\subset [\lambda_1,\lambda_2]\ \text{and}\ \|a-b\|< \delta'\Rightarrow \|\log a-\log b\|<\frac{\delta}{4}.\label{eq: continuity of log}
\end{equation} 
By using the spectral decomposition of $h$, we can choose a partition of unity $\{p_i\}_{i=1}^{\ell}$ in $R$ and $\{\mu_i\}_{i=1}^{\ell}$ in $\mathbb{R}_+^*$ such that 
\begin{align*}
\tau(p_i)=\frac{1}{\ell},\,\,\, hp_i=p_ih,\\
\|(\log h)p_i-(\log \mu_i)p_i\|<\tfrac{1}{4}\delta,\\
\|hp_i-\mu_ip_i\|<\delta',
\end{align*}
for all $(1\le i\le \ell)$. Let $h_0:=\sum_{i=1}^{\ell}\mu_ip_i$,\, and we have
\begin{equation}
\|h-h_0\|<\delta'\ \ \text{and\ \ }\|\log h-\log h_0\|<\frac{1}{4}\delta.\label{eq: h-h_01}
\end{equation}
Moreover, we may arrange $\{\mu_i\}_{i=1}^{\ell}$ so that  $h_0=\sum_{i=1}^{\ell}\mu_ip_i$ satisfies 
\begin{equation}
\sigma(h_0)\subset [\lambda_1,\lambda_2].\label{eq: estimate on h-h_0}
\end{equation}
Since $R$ is hyperfinite, there exists a type I subfactor $F$ of $R$ so that 
$p_i\in F\ (1\le i\le \ell)$ and 
\begin{equation}
\|x_k-E_F(x_k)\|_{\varphi}<\frac{\varepsilon^2}{16},\ \ \ 1\le k\le n,\label{eq: 0.3}
\end{equation}
where $E_F\colon R\to F$ denotes the $\tau$-preserving conditional expectation. Put $T_F:=T|_F\colon F\to N$ and $y_k:=E_F(x_k)\ (1\le k\le n)$. 
Combining (\ref{eq: 0.1}) and (\ref{eq: 0.3}), for all $1\le k\le n$, we have (use the Schwarz inequality for completely positive maps) 
\begin{align}
\|T_F(y_k)-u_k\|_{\varphi}&\le \|T(y_k)-T(x_k)\|_{\varphi}+\|T(x_k)-u_k\|_{\varphi}\notag \\
&\le \|y_k-x_k\|_{\psi}+\|T(x_k)-u_k\|_{\varphi}\notag \\
&<\frac{\varepsilon^2}{8}.\label{eq: T_Fy_k-u_k}
\end{align}
Then we follow the argument of \cite[Lemma 6.2]{Haagerup89}.  Take $v_1,\ldots,v_k\in \mathcal{U}(F)$ such that 
\begin{equation*}
y_k=v_k|y_k|,\ \ \ \ |y_k|:=(y_k^*y_k)^{\frac{1}{2}},\ \ \ 1\le k\le n.
\end{equation*} 
Then again by the Schwarz inequality for completely positive maps and (\ref{eq: T_Fy_k-u_k}), 
\begin{equation*}
\|y_k\|_{\psi}\ge \|T_F(y_k)\|_{\varphi}>\|u_k\|_{\varphi}-\frac{\varepsilon^2}{8}.
\end{equation*}
Since $\|(y_k^*y_k)^{\frac{1}{2}}\|_{\psi}=\|y_k\|_{\psi}$ and $|y_k|^2+(1-|y_k|)^2\le 1$ (because $0\le |y_k|\le 1$), we have 
\begin{align*}
\|v_k-y_k\|_{\psi}^2&=\|1-|y_k|\|_{\psi}^2\le 1-\|\,|y_k|\,\|_{\psi}^2\notag \\
&<1-(1-\tfrac{\varepsilon^2}{8})^2\notag \\
&<\frac{\varepsilon^2}{4}.
\end{align*}
Therefore since $\varepsilon^2<\varepsilon$, 
\begin{align*}
\|T_F(v_k)-u_k\|_{\varphi}&\le \|T_F(v_k-y_k)\|_{\varphi}+\|T_F(y_k)-u_k\|_{\varphi}\notag \\
&< \|v_k-y_k\|_{\psi}+\frac{\varepsilon^2}{8}\notag \\
&<\varepsilon.
\end{align*}
Next, set $\chi:=\tau(h_0\ \cdot\ )\in (R_*)_+$. Note that $\sigma_t^{\chi|_F}=\sigma_t^{\chi}|_F\ (t\in \mathbb{R})$, since $h_0\in F$. 
Then by (\ref{eq: h-h_01}), we have \begin{align}
\|h^{it}-h_0^{it}\|&=\left \|\int_0^1\frac{\rm{d}}{{\rm{d}}s}e^{ist\log h}e^{i(1-s)t\log h_0}\,ds\right \|\notag \\
&\le \int_0^1\|e^{ist\log h}(t\log h-t\log h_0)e^{i(1-s)t\log h_0}\|\,ds\notag \\
&\le \|\log h-\log h_0\|\,|t|\notag\\
&\le \frac{\delta |t|}{4}.\label{eq: hit-h_0it}
\end{align}
On the other hand, $h_F:={\rm{d}}\psi|_F/{\rm{d}}\tau|_F\in F_+$ is equal to $E_F(h)$. Therefore by Lemma \ref{lem: inclusion of spectrum}, $\sigma(h_F)\subset [\lambda_1,\lambda_2]$. 
Moreover, since $E_F(h_0)=h_0$, we have 
\begin{equation*}
\|h_F-h_0\|=\|E_F(h-h_0)\|\le \|h-h_0\|<\delta'. 
\end{equation*}
This shows by (\ref{eq: continuity of log}) and (\ref{eq: estimate on h-h_0}) that $\|\log h_F-\log h_0\|<\frac{\delta}{4}$. Therefore by the same argument, we have 
\begin{equation}
\|h_F^{it}-h_0^{it}\|\le \frac{\delta |t|}{4},\ \ \  t\in \mathbb{R}.\label{eq: h_Fit-h_0it}
\end{equation}
For all $t\in \mathbb{R}$ and $x\in F$, 
\eqa{
\|\sigma_t^{\varphi}\circ T_F(x)-T_F\circ \sigma_t^{\psi|_F}(x)\|&=\|T(\sigma_t^{\psi}(x)-\sigma_t^{\psi|_F}(x))\|\\
&\le \|\sigma_t^{\psi}(x)-\sigma_t^{\psi|_F}(x)\|\\
&\le \|\sigma_t^{\psi}(x)-\sigma_t^{\chi}(x)\|+\|\sigma_t^{\chi|_F}(x)-\sigma_t^{\psi|_F}(x)\|.
}
By (\ref{eq: hit-h_0it}), we have 
\eqa{
\|\sigma_t^{\psi}(x)-\sigma_t^{\chi}(x)\|&=\|h^{it}xh^{-it}-h_0^{it}xh_0^{-it}\|\\
&\le (\|h^{it}-h_0^{it}\|+\|h^{-it}-h_0^{-it}\|)\|x\|\\
&\le \frac{1}{2}\delta |t|\|x\|.
}
Similarly, by (\ref{eq: h_Fit-h_0it}), 
\eqa{
\|\sigma_t^{\chi|_F}(x)-\sigma_t^{\psi|_F}(x)\|&\le (\|h_0^{it}-h_F^{it}\|+\|h_0^{-it}-h_F^{-it}\|)\|x\|\\
&\le \frac{1}{2}\delta |t|\|x\|.
}
These altogether imply that $\|\sigma_t^{\varphi}\circ T_F(x)-T_F\circ \sigma_t^{\psi|_F}(x)\|\le \delta |t|\|x\|$. 
\end{proof}
\section{$\mathbb{Q}$-stable states on III$_1$ factors}\label{section: Q-stable state}
For technical reasons we shall consider a special class of normal faithful states, which we call $\mathbb{Q}$-stable states, because they have nice properties with respect to certain operations involving rationals. 
\begin{definition}\label{def: UH4.1}
A normal faithful state $\varphi$ on a von Neumann algebra $N$ is called {\it $\mathbb{Q}$-stable}, if for every $m\in \mathbb{N}$, there exist $m$ isometries $u_1,\dots, u_m\in N$ with orthogonal range projections, such that 
\begin{align*}
\sum_{i=1}^mu_iu_i^*&=1,\\
\varphi u_i=\frac{1}{m}&u_i\varphi,\ \ \ \ \ \ \ i=1,\ldots,m. 
\end{align*}
\end{definition}

\begin{theorem}\label{thm: UH4.2}
Every factor of type {\rm{III}}$_1$ with separable predual has a $\mathbb{Q}$-stable normal faithful state.
\end{theorem}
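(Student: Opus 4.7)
By the KMS condition, the relation $\varphi u_i=\frac{1}{m}u_i\varphi$ is equivalent (for $u_i$ entire-analytic, hence for general $u_i$ by a standard smoothing argument) to $\sigma_{-i}^{\varphi}(u_i)=\frac{1}{m}u_i$, i.e. to $\sigma_t^{\varphi}(u_i)=m^{-it}u_i$ for all $t\in \mathbb{R}$. Setting $e_{ij}:=u_iu_j^*$, the Cuntz relations then force $\{e_{ij}\}_{i,j=1}^m$ to be a system of $m\times m$ matrix units generating a unital copy $F_m\cong M_m(\mathbb{C})$ contained in the centralizer $N_{\varphi}$, with $\varphi|_{F_m}$ equal to the tracial state, while $u_1$ itself is an isometry in $N$ with $u_1u_1^*=e_{11}$ and $\sigma_t^{\varphi}(u_1)=m^{-it}u_1$. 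Conversely, given such a pair $(F_m,u_1)$, the formulas $u_i:=e_{i1}u_1$ recover the Cuntz family. Hence $\mathbb{Q}$-stability of $\varphi$ is equivalent to the existence, for every $m\in \mathbb{N}$, of a unital subfactor $F_m\cong M_m(\mathbb{C})$ of $N_{\varphi}$ with tracial restriction, together with an isometry in $N$ of modular weight $\log m$ whose range is a minimal projection of $F_m$.

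The plan is then to construct such a $\varphi$ in two stages. For a fixed $m$, I would work in the continuous core $M=N\rtimes_{\sigma^{\varphi_0}}\mathbb{R}$ for some base faithful normal state $\varphi_0$. Using $\sigma_t^{\varphi_0}(u)=\lambda(t)u\lambda(-t)$ for $u\in N$, the eigenoperator relation becomes $au=u(a-\log m)$, where $a$ is the generator of $\lambda$. Since $M$ is a type II$_{\infty}$ factor on which the dual flow $\theta$ translates $a$ and whose $a$ has spectrum equal to $\mathbb{R}$, one may construct $\theta$-invariant isometries $u_i\in M$ (hence in $N$) satisfying the intertwining and Cuntz relations; this yields a Cuntz system of modular weight $\log m$ for a suitable state on $N$. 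To handle all $m$ simultaneously, I would exploit the proper infiniteness of $N$ (so that $N\cong N\otimes M_k(\mathbb{C})$ for every $k$) to place inductively mutually commuting finite-dimensional subfactors $F_m\subset N$ inside a common centralizer, applying the core construction within successive relative commutants to produce the isometries for each $m$ without disturbing the structure built for smaller indices.

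The main obstacle I anticipate is exactly this coordination across all $m$. For a general (not necessarily injective) type III$_1$ factor, producing a single normal faithful state whose centralizer simultaneously contains ascending copies of every $M_m(\mathbb{C})$ with tracial restrictions, and which admits the required modular eigenoperators realized by isometries rather than mere partial isometries, is delicate. Connes--St\o rmer transitivity should play a role, allowing one to perturb intermediate states along the inductive construction while preserving the structures already installed. Once a single $\mathbb{Q}$-stable state has been obtained, the conjugation-invariance of the property together with Connes--St\o rmer transitivity immediately makes the set of $\mathbb{Q}$-stable states norm-dense among faithful normal states on $N$.
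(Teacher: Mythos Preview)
Your reformulation of $\mathbb{Q}$-stability in terms of modular eigenoperators and Cuntz families is correct and useful. However, what follows is a plan rather than a proof, and the gap you yourself flag --- coordinating the constructions for all $m$ into a single faithful normal state --- is genuine and not resolved by your sketch. Even for a fixed $m$ your core argument is incomplete: producing one isometry $u$ with $\sigma_t^{\varphi}(u)=m^{-it}u$ is not the same as producing $m$ such isometries with orthogonal ranges summing to $1$; the latter requires that the centralizer $N_{\varphi}$ already contain a copy of $M_m(\mathbb{C})$ on which $\varphi$ is tracial, and for an arbitrary faithful normal state on a general III$_1$ factor this need not hold. Your inductive scheme would have to modify $\varphi$ at each stage, and controlling faithfulness of the limit while preserving all previously installed eigenoperators is exactly the hard point you leave open.

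The paper circumvents this difficulty entirely by a two-step reduction. First, it exhibits a $\mathbb{Q}$-stable state on the model factor $R_{\infty}$ explicitly, by realizing $R_{\infty}$ as $\bigotimes_{m\ge 2}(R_{1/m},\varphi_{1/m})$ where each Powers factor $R_{1/m}$ with its product state already carries the required Cuntz family of weight $\log m$; the tensor product state is then $\mathbb{Q}$-stable by construction, with no coordination problem. Second, it shows that any III$_1$ factor $N$ with separable predual admits a normal faithful conditional expectation $E$ onto a subfactor $P\cong R_{\infty}$, using Connes--St\o rmer transitivity to inductively embed commuting copies of $M_2(\mathbb{C})$. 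The state $\varphi=\omega\circ E$, with $\omega$ the $\mathbb{Q}$-stable state on $P$, is then $\mathbb{Q}$-stable on $N$ by the bimodule property of $E$. The conceptual gain is that the delicate simultaneous construction is done once on the concrete model $R_{\infty}$, and transferred to $N$ wholesale via the expectation; your approach attempts to build everything inside $N$ from scratch, which is why the coordination issue bites.
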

For the proof of Theorem \ref{thm: UH4.2}, we shall need two lemmas: 
\begin{lemma}\label{lem: UH4.3} 
The Araki--Woods factor $R_{\infty}$ has a $\mathbb{Q}$-stable normal faithful state. 
\end{lemma}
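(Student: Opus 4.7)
The plan is to exploit the Cuntz algebra construction to realize $R_\infty$ as an ITPFI factor in which Cuntz--type isometries for every index $m$ appear naturally. For each integer $m\ge 2$, consider the Cuntz algebra $O_m$ with its unique KMS state $\omega_m$ for the gauge action $\alpha_t(s_i)=e^{it}s_i$ at inverse temperature $\log m$ (Olesen--Pedersen). The GNS construction produces the hyperfinite factor $R_{1/m}$ of type III$_{1/m}$ equipped with the faithful normal state $\omega_m$, together with $m$ distinguished isometries $s_1^{(m)},\ldots,s_m^{(m)}$ satisfying the Cuntz relations $\sum_i s_i^{(m)}(s_i^{(m)})^*=1$, $(s_i^{(m)})^*s_j^{(m)}=\delta_{ij}$, and the modular identity $\sigma_t^{\omega_m}(s_i^{(m)})=m^{-it}s_i^{(m)}$. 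A short KMS computation shows that this identity is equivalent to the relation $\omega_m\cdot s_i^{(m)}=\frac{1}{m}s_i^{(m)}\cdot\omega_m$ of Definition \ref{def: UH4.1}, one copy for each $m$.

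Next I would form the infinite tensor product
\[
(N,\omega):=\bigotimes_{m\ge 2}(R_{1/m},\omega_m),
\]
with product state $\omega$. Each factor $R_{1/m}$ is itself ITPFI (being injective of type III$_{1/m}$), and a reshuffling argument exhibits $N$ as an ITPFI factor. Then I would identify $N$ as $R_\infty$ by showing that the Araki--Woods asymptotic ratio set of $N$ equals $[0,\infty)$. The $m$-th tensor factor contributes $\{0\}\cup m^{\mathbb{Z}}$, and the ratio set of the whole ITPFI algebra contains the closed multiplicative subgroup of $\mathbb{R}_+$ generated by all of these, which is already dense in $\mathbb{R}^*_+$ since it contains every prime. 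Thus $r_\infty(N)=[0,\infty)$, which means $N$ is of type III$_1$; Theorem 2.1 (Araki--Woods) then yields $N\cong R_\infty$.

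Finally, for each $m\ge 2$, lift the Cuntz isometries to $N$ by placing $s_i^{(m)}$ in the $m$-th tensor slot:
\[
u_i^{(m)}:=1\otimes\cdots\otimes 1\otimes s_i^{(m)}\otimes 1\otimes\cdots,\qquad i=1,\ldots,m.
\]
Because $\omega$ restricts to $\omega_m$ on the $m$-th factor and the modular flow of a product state factorises as the tensor product of component modular flows, the Cuntz relations and the identity $\sigma_t^{\omega}(u_i^{(m)})=m^{-it}u_i^{(m)}$ survive. Translating once more through the KMS condition gives $\omega\cdot u_i^{(m)}=\frac{1}{m}u_i^{(m)}\cdot\omega$ for every $m\ge 2$ (the case $m=1$ is trivial with $u_1=1$). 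Thus $\omega$ is a $\mathbb{Q}$-stable normal faithful state on $N\cong R_\infty$.

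The main technical obstacle will be the ratio-set computation, but this rests only on two standard facts: the ratio set of an ITPFI tensor product contains the closed subgroup of $\mathbb{R}_+$ generated by the ratio sets of the factors, and the Cuntz--KMS realisation of $R_{1/m}$ has ratio set $\{0\}\cup m^{\mathbb{Z}}$. Once these are in place, invoking Araki--Woods (as opposed to the as-yet-unproved uniqueness of injective III$_1$ factors) is legitimate and closes the argument.
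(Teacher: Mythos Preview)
Your proof is correct and follows the same architecture as the paper: form the infinite tensor product $\bigotimes_{m\ge 2}(R_{1/m},\text{state})$, verify $\mathbb{Q}$-stability by exhibiting Cuntz-type isometries in each tensor slot, and identify the result with $R_\infty$ via the asymptotic ratio set and Araki--Woods' classification. The only substantive difference is in how the $m$ isometries inside $R_{1/m}$ are produced. The paper works with the Powers product state $\varphi_{1/m}$, finds a single eigen-isometry $u$ with $\sigma_t^{\varphi_{1/m}}(u)=(1/m)^{it}u$, and then uses that the centralizer $(R_{1/m})_{\varphi_{1/m}}$ is a II$_1$ factor to split $uu^*$ into $m$ equivalent pieces and manufacture $u_1,\ldots,u_m$. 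You instead invoke the Olesen--Pedersen KMS state on $O_m$, where the Cuntz generators already satisfy $\sum_i s_is_i^*=1$ and $\sigma_t^{\omega_m}(s_i)=m^{-it}s_i$, so no centralizer gymnastics are needed. Your route is slightly slicker at this step but imports the (standard) fact that $\pi_{\omega_m}(O_m)''$ is the injective III$_{1/m}$ factor; the paper's route stays entirely inside the Powers-factor picture and uses only Connes' description of the centralizer. Either way the ratio-set argument and the final identification are identical.
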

\begin{proof}
Let $R_{\lambda}\ (0<\lambda<1)$ be the Powers factor of type III$_{\lambda}$, and let $\varphi_{\lambda}$ be the product state on $R_{\lambda}$. Then $\varphi_{\lambda}$ is normal and faithful, and $\sigma^{\varphi_{\lambda}}$ has period $-2\pi /\log \lambda$. Then the centralizer $(R_{\lambda})_{\varphi_{\lambda}}$ is a type II$_1$ factor (cf. \cite[Th\'eor\`eme 4.2.6]{Connes73}), and there exists an isometry $u\in R_{\lambda}$ such that    
\begin{equation*}
\sigma_t^{\varphi_{\lambda}}(u)=\lambda^{it}u,\ \ \ \ \ \ t\in \mathbb{R}.
\end{equation*}
This implies that $\sigma_t^{\varphi_{\lambda}}(uu^*)=uu^*\ (t\in \mathbb{R})$, i.e., $uu^*\in (R_{\lambda})_{\varphi_{\lambda}}$. Moreover, by \cite[Lemma 1.6]{Takesaki73}, we have  
\begin{equation*}
\varphi_{\lambda}u=\lambda u\varphi_{\lambda},
\end{equation*}
and hence $\varphi_{\lambda}(uu^*)=(\varphi_{\lambda}u)(u^*)=\lambda \varphi_{\lambda}(u^*u)=\lambda.$ 

Assume now, that $\lambda=\frac{1}{m},\ m\in \mathbb{N}, m\ge 2$. Then we can choose $m$ equivalent orthogonal projections $p_1,\ldots, p_m\in (R_{\lambda})_{\varphi_{\lambda}}$ with sum 1, such that $p_1=uu^*$. Next, choose partial isometries $v_1,\ldots,v_m\in (R_{\lambda})_{\varphi_{\lambda}}$ such that 
\begin{equation*}
v_i^*v_i=p_1,\ \ v_iv_i^*=p_i,\ \ \ \ i=1,\ldots,m. 
\end{equation*}
Put $u_i=v_iu,\ \ i=1,\ldots,m.$ Then $u_1,\ldots,u_m$ are $m$ isometries in $R_{\lambda}$, such that $\sum_{i=1}^mu_iu_i^*=1$, and $\varphi_{\lambda}u_i=\lambda u_i\varphi_{\lambda},\ i=1,\ldots,m$. Put now
\begin{equation*}
(P,\varphi)=\bigotimes_{m=2}^{\infty}(R_{\frac{1}{m}},\varphi_{\frac{1}{m}}). 
\end{equation*}
Then it is clear from the above computations, that $\varphi$ is a $\mathbb{Q}$-stable normal faithful state on $P$ (observe that it is sufficient to consider $m\ge 2$ case in Definition \ref{def: UH4.1}). Moreover, $P$ is an ITPFI factor for which the the asymptotic ratio set $r_{\infty}(P)$ contains $\{\frac{1}{m};m\in \mathbb{N}\}$. Since $r_{\infty}(P)\cap \mathbb{R}_+$ is a closed subgroup of $\mathbb{R}_+$, we have $r_{\infty}(P)\supseteq \mathbb{R}_+$. Therefore by Araki--Woods' Theorem \cite[Theorem 7.6]{ArakiWoods68}, $P\cong R_{\infty}$ holds.   
\end{proof}
\begin{lemma}\label{lem: UH4.4}
Let $N$ be a factor of type {\rm{III}}$_1$ with separable predual. Then there exists a normal faithful conditional expectation of $N$ onto a subfactor $P$ isomorphic to $R_{\infty}$. 
\end{lemma}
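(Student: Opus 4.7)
The plan is to realize $P\cong R_\infty$ inside $N$ via the continuous core, obtaining it as the fixed-point algebra of a $\theta^\varphi$-invariant hyperfinite II$_\infty$ subfactor of $M = N\rtimes_{\sigma^\varphi}\mathbb{R}$. Fix a faithful normal state $\varphi$ on $N$ and form $M$. Since $N$ is of type III$_1$, $M$ is a type II$_\infty$ factor equipped with a faithful normal semifinite trace $\tau$ and the dual trace-scaling $\mathbb{R}$-action $\theta^\varphi$ (satisfying $\tau\circ\theta_s^\varphi = e^{-s}\tau$), whose fixed-point algebra is $N = M^{\theta^\varphi}$.

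I aim to construct a $\theta^\varphi$-invariant subfactor $Q\subset M$ isomorphic to the hyperfinite type II$_\infty$ factor $R_{0,1}$, together with a $\tau$-preserving normal faithful conditional expectation $\mathcal{E}:M\to Q$ satisfying $\mathcal{E}\circ\theta^\varphi_s = \theta^\varphi_s\circ\mathcal{E}$, in such a way that the restricted action $\theta^\varphi|_Q$ is the canonical trace-scaling $\mathbb{R}$-action on $R_{0,1}$ whose fixed-point algebra is $R_\infty$. Granted this, set $P := Q^{\theta^\varphi|_Q}$. Then $P = Q\cap N$ (because $Q$ is $\theta^\varphi$-invariant and $N = M^{\theta^\varphi}$), and $P\cong R_\infty$ by construction. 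The equivariance of $\mathcal{E}$ guarantees that $\mathcal{E}$ maps $N$ into $Q^{\theta^\varphi|_Q} = P$; since $\mathcal{E}|_Q = \mathrm{id}_Q$, the restriction $\mathcal{E}|_N$ is a conditional expectation $N\to P$, and it is normal and faithful because $\mathcal{E}$ is.

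For the construction of $(Q,\mathcal{E})$, I would use the $\theta^\varphi$-invariant finite-trace spectral projections $e_\alpha = 1_{[0,\alpha]}(a)$ from Lemma \ref{lem: UH3.3}, where $a$ is the generator of $\lambda(\cdot)$. Each corner $e_\alpha M e_\alpha$ is a type II$_1$ factor (not necessarily hyperfinite, since $N$ is not assumed injective), but still contains the hyperfinite II$_1$ factor $R$ as a subfactor with trace-preserving expectation. Inductively build mutually commuting $\theta^\varphi$-compatible finite-dimensional subfactors of $M$ using such hyperfinite pieces in successive corners, intertwined along the trace-scaling flow via translates by $\lambda(\cdot)$ in $M$; the inductive limit is $Q$, and $\mathcal{E}$ is the corresponding inductive limit of trace-preserving expectations.

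The main obstacle is the equivariant construction of $(Q,\mathcal{E})$: one must realize, inside an arbitrary type II$_\infty$ factor $M$, a specific model of $R_{0,1}$ together with its canonical trace-scaling $\mathbb{R}$-action (so that the fixed-point algebra is $R_\infty$ rather than some III$_\lambda$ or III$_0$ factor). This is essentially a classification-of-actions problem, feasible here because trace-scaling actions of $\mathbb{R}$ on $R_{0,1}$ are classified up to cocycle conjugacy by the flow of weights of the fixed-point algebra, and the type III$_1$ hypothesis on $N$ (equivalently, trivial flow of weights, i.e., trivial center of $M$) is precisely what allows a model with fixed-point algebra $R_\infty$ to be found inside $M$.
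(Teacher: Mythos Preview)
Your plan has a concrete error and a substantial gap. First, the spectral projections $e_\alpha = 1_{[0,\alpha]}(a)$ are \emph{not} $\theta^\varphi$-invariant: since $\theta^\varphi_s(\lambda(t)) = e^{-ist}\lambda(t)$, the generator satisfies $\theta^\varphi_s(a) = a - s$, so $\theta^\varphi_s(e_\alpha) = 1_{[s,\,s+\alpha]}(a) \neq e_\alpha$. This undercuts your use of the corners $e_\alpha M e_\alpha$ as building blocks for anything $\theta^\varphi$-invariant. Second, and more seriously, the entire difficulty of the lemma is hidden in your phrase ``inductively build mutually commuting $\theta^\varphi$-compatible finite-dimensional subfactors \ldots\ intertwined along the trace-scaling flow.'' No actual construction is given. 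The classification of trace-scaling $\mathbb{R}$-actions on $R_{0,1}$ tells you when two such actions are cocycle conjugate; it does \emph{not} tell you how to embed a prescribed model $(R_{0,1},\text{canonical action})$ equivariantly and with expectation into an arbitrary (non-hyperfinite) II$_\infty$ factor $M$ carrying a given trace-scaling action. Producing such an equivariant pair $(Q,\mathcal{E})$ is essentially equivalent, via Takesaki duality, to the original statement you are trying to prove: a $\theta^\varphi$-invariant $Q$ with equivariant expectation corresponds exactly to a $\sigma^\varphi$-invariant subfactor $P\subset N$ with $\varphi$-preserving expectation, which is the goal.

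The paper's proof avoids the core entirely and works directly in $N$. Write $R_\infty = \bigotimes_{k}(M_2(\mathbb{C}),\omega_k)$ and use Connes--St\o rmer transitivity iteratively: at stage $k$, inside the relative commutant of what has already been built, conjugate by a unitary so that the state is close (in norm, with summable error $2^{-k}$) to a tensor product with a prescribed $(M_2(\mathbb{C}),\omega_k)$ factor. The states $\varphi_k$ converge in norm to some $\varphi$; one checks that the support projection of $\varphi$ commutes with all the finite-dimensional pieces $F_k$, cuts down by it, and obtains a subfactor $P$ generated by the compressed $F_k$'s with $(P,\varphi|_P)\cong (R_\infty,\bigotimes\omega_k)$. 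Since each $F_k$ is globally $\sigma^{\varphi}$-invariant by the tensor splitting, so is $P$, and Takesaki's theorem provides the conditional expectation. The key ingredient you are missing is precisely this Connes--St\o rmer step, which lets you realize any finite-dimensional state approximately inside a III$_1$ factor; nothing in your crossed-product picture replaces it.
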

\begin{proof}
\footnote{Typewriter's note: this result has been extended by Haagerup--Musat \cite[Theorem 3.5]{HM09}, where the authors study more general embeddings of ITPFI type III factors into type III factors as the range of normal faithftul conditional expectations.}
We can write $R_{\infty}$ as an infinite tensor product 
\begin{equation*}
R_{\infty}=\bigotimes_{k=1}^{\infty}(P_k,\omega_k),
\end{equation*}
where each $P_k$ is a copy of the $2\times 2$ matrices $M_2(\mathbb{C})$ and $(\omega_k)_{k=1}^{\infty}$ is a sequence of normal faithful states on $M_2(\mathbb{C})$. Let $\varphi$ be a fixed normal faithful state on $N$. Since $N$ is properly infinite, we have $N\otimes M_2(\mathbb{C})\cong N$. 
Moreover, by Connes-St\o rmer transitivity theorem \cite{ConnesStormer78}, we can choose a $*$-isomorphism $\Phi\colon N\otimes M_2(\mathbb{C})\to N$ such that 
\begin{equation*}
\|(\varphi\otimes \omega_1)\circ \Phi^{-1}-\varphi\|<\frac{1}{2}. 
\end{equation*}
Put $F_1=\Phi (\mathbb{C}\otimes M_2(\mathbb{C})),$ and $\varphi_1=(\varphi\otimes \omega_1)\circ \Phi^{-1}$. Then 
$F_1$ is a type I$_2$ subfactor of $N$. Moreover, it holds that $N\cong F_1\otimes F_1^{\text{c}}$, where $F_1^{\text{c}}=F_1'\cap N$ is the relative commutant, and $\varphi_1=\varphi_1|_{F_1}\otimes \varphi_1|_{F_1^{\text{c}}}$. Moreover, we have
\begin{equation*}
(F_1,\varphi_1|_{F_1})\cong (P_1,\omega_1). 
\end{equation*}
Using the same arguments to the type III$_1$ factor $F_1^{\text{c}}$, we can find a type I$_2$-subfactor $F_2\subset F_1^{\text{c}}$, a normal faithful state $\varphi_2'$ on $F_1^{\text{c}}$, such that $\|\varphi_2'-\varphi_1|_{F_1^{\text{c}}}\|<\frac{1}{4}$,  
\begin{equation*}
\varphi_2'=\varphi_2'|_{F_2}\otimes \varphi_2'|_{(F_1\otimes F_2)^{\text{c}}}, 
\end{equation*}
and $(F_2,\varphi_2'|_{F_2})\cong (P_2,\omega_2)$. Thus, if we put $\varphi_2=\varphi_1|_{F_1}\otimes \varphi_2'$, we have 
$\|\varphi_1-\varphi_2\|<\frac{1}{4}$,
\begin{equation*}
\varphi_2=\varphi_2|_{F_1}\otimes \varphi_2|_{F_2}\otimes \varphi_2|_{(F_1\otimes F_2)^{\text{c}}},
\end{equation*}
and 
\begin{align*}
(F_1,\varphi_2|_{F_1})\cong (P_1,\omega_1),\\
(F_2,\varphi_2|_{F_2})\cong (P_2,\omega_2).
\end{align*}
Proceeding in this way, we obtain a sequence $(F_k)_{k=1}^{\infty}$ of mutually commuting type I$_2$-subfactors of $N$, and a sequence $(\varphi_k)_{k=1}^{\infty}$ of normal faithful states on $N$, such that 
\begin{equation*}
\|\varphi_k-\varphi_{k-1}\|<2^{-k},\ \ \ \ \ k\ge 2,
\end{equation*}
and 
such that for fixed $m\in \mathbb{N}$: 
\begin{equation*}
\varphi_m=\varphi_m|_{F_1}\otimes \varphi_m|_{F_2}\otimes \ldots \otimes \varphi_m|_{F_m}\otimes \varphi_m|_{(F_1\otimes \ldots \otimes F_m)^{\text{c}}},
\end{equation*}
and 
\begin{equation*}
(F_i,\varphi_m|_{F_i})\cong (P_i,\omega_i)\ \ \ \  \ i=1,\ldots, m. 
\end{equation*}
Let $\varphi$ be the norm limit in $N_*$ of the sequence $(\varphi_k)_{k=1}^{\infty}$. 
Then $\varphi$ is a normal state, but it can fail to be faithful. From the properties of $\varphi_k$, we have for all $m\in \mathbb{N}$, 
\begin{equation*}
\varphi=\varphi|_{F_1}\otimes \varphi|_{F_2}\otimes \ldots \otimes \varphi|_{F_m}\otimes \varphi_m|_{(F_1\otimes \ldots \otimes F_m)^{\text{c}}},
\end{equation*}
and 
\begin{equation*}
(F_m,\varphi|_{F_m})\cong (P_m,\omega_m).
\end{equation*}
Let $r_k$ be the ratio between the largest and the smallest eigenvalues of $\text{d}\omega_k/\text{d}\text{Tr}$. Let $u\in \mathcal{U}(P_k)$.  
We may assume that $\omega_k=\text{Tr}(h_k\ \cdot\ ),\, h_k:=\frac{1}{1+r_k}\text{diag}(r_k,1)\ (r_k\ge 1)$. Then if $a=\mattwo{x}{y}{z}{w}\in P_k$ is positive, then 
\begin{align*}
u\omega_ku^*(a)&=\text{Tr}(h_ku^*au)=\text{Tr}((u^*au)^{\frac{1}{2}}h_k(u^*au)^{\frac{1}{2}})\notag \\
&\ge \frac{1}{1+r_k}\text{Tr}(u^*au)=\frac{x+w}{1+r_k}\notag \\
&\ge r_k^{-1}(\frac{r_kx}{1+r_k}+\frac{w}{1+r_k})\notag \\
&=r_k^{-1}\omega_k(a).
\end{align*}
Similarly, $u\omega_ku^*(a)\le r_k\omega_k(a)$ holds. This shows that 

\begin{equation*}
r_k^{-1}\omega_k \le u\omega_k u^*\le r_k\omega_k.
\end{equation*}
Hence for all $u\in \mathcal{U}(F_k)$, 
\begin{equation*}
r_k^{-1}\varphi \le u\varphi u^*\le r_k\varphi.
\end{equation*}
Thus, $\varphi$ and $u\varphi u^*$ have the same support projection in $N$, i.e., with $e=\text{supp}(\varphi)$, we have 
\begin{equation*}
ueu^*=e,\ \ \ \ u\in \mathcal{U}(F_k),\ \ k\in \mathbb{N}.
\end{equation*}
This shows that $e\in \left (\bigcup_{k=1}^{\infty}F_k\right )'\cap N$. Put $G_k=eF_k$. Then $(G_k)_{k=1}^{\infty}$ is a sequence of commuting subfactors of $eNe$. Moreover, 
the restriction $\varphi_e$ of $\varphi$ to $eNe$ is a normal faithful state on $eNe$, and 
\begin{equation*}
(G_1\otimes \ldots \otimes G_m,\varphi_e|_{G_1\otimes \ldots \otimes G_m})\cong \bigotimes_{k=1}^m(P_k,\omega_k)
\end{equation*}
for all $m\in \mathbb{N}$. Let $P$ be the von Neumann algebra generated by $\bigcup_{k=1}^{\infty}G_k$. Then 
\begin{equation*}
(P,\varphi|_P)\cong \bigotimes_{k=1}^{\infty}(P_k,\omega_k).
\end{equation*}
In particular, $P\cong R_{\infty}$. Moreover, since 
\begin{equation*}
\varphi_e=\varphi_e|_{G_1}\otimes \ldots \varphi_e|_{G_m}\otimes \varphi_e|_{(G_1\otimes \ldots \otimes G_m)^{\text{c}}},
\end{equation*}
where $(G_1\otimes \dots \otimes G_m)^{\text{c}}$ denotes the relative commutant of $\bigcup_{k=1}^mG_k$ in $eNe$, we have 
\begin{equation*}
\sigma_t^{\varphi_e}(G_1\otimes \dots \otimes G_m)=G_1\otimes \dots \otimes G_m,\ \ \ \ t\in \mathbb{R}
\end{equation*}
for all $m\in \mathbb{N}$, and hence also $\sigma_t^{\varphi_e}(P)=P,\ \ t\in \mathbb{R}$. 
Thus by \cite{Takesaki72}, there exists a normal faithful conditional expectation of $eNe$ onto $P$. This completes the proof, since $eNe$ is isomorphic to $N$. 
\end{proof}
\begin{proof}[Proof of Theorem \ref{thm: UH4.2}]
Let $N$ be a type III$_1$ factor with separable predual. By Lemmata \ref{lem: UH4.3} and \ref{lem: UH4.4}, we can choose a normal faithful conditional expectation $E$ of $N$ onto a subfactor $P$ of $N$ isomorphic to $R_{\infty}$. Moreover, we can choose a $\mathbb{Q}$-stable normal faithful state $\omega$ on $P$. Put $\varphi=\omega\circ E$. Then it follows from the bimodule property of conditional expectations \cite[Theorem 1]{Tomiyama58} that $\varphi$ is a $\mathbb{Q}$-stable normal faithful state on $N$. 
\end{proof}
\begin{theorem}\label{thm: UH4.5}
Let $\varphi$ be a $\mathbb{Q}$-stable normal faithful state on a von Neumann algebra $N$, let $m\in \mathbb{N}$, and let $q_1,\dots,q_m$ be $m$ positive rational numbers with sum 1. Then there exists a type {\rm{I}}$_m$ subfactor $F$ of $N$, such that
\begin{list}{}{}
\item[{\rm{(a)}}] $\varphi=\varphi|_{F}\otimes \varphi|_{F^{{\rm{c}}}}$. 
\item[{\rm{(b)}}] $\varphi|_{F^{{\rm{c}}}}$ is $\mathbb{Q}$-stable.
\item[{\rm{(c)}}] ${\rm{d}}\varphi|_{F}/{\rm{d}}{\rm{Tr}}_F$ has eigenvalues $(q_1,\dots,q_m)$. 
\end{list}  
Here, ${\rm{Tr}}_F$ denotes the trace on $F$ for which ${\rm{Tr}}_F(1)=m$. 
\end{theorem}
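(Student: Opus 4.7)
Write $q_i = p_i/L$ with common denominator $L = \sum_{k=1}^m p_k$. The plan is to apply the $\mathbb{Q}$-stability of $\varphi$ at two different ``resolutions'' and splice the resulting isometries into a system of matrix units for the desired type I$_m$ subfactor. First, I invoke $\mathbb{Q}$-stability with parameter $L$ to produce isometries $u_1,\dots,u_L \in N$ with $u_j^*u_k = \delta_{jk}\,1$, $\sum_j u_ju_j^* = 1$, and $\sigma_t^\varphi(u_j) = L^{-it}u_j$; I then re-enumerate them as $u_{k,j}$ for $1 \le j \le p_k$, $1 \le k \le m$, according to the partition $L = p_1+\dots+p_m$. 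Second, for each $k=1,\dots,m$, I invoke $\mathbb{Q}$-stability with parameter $p_k$ to produce isometries $s_1^{(k)},\dots,s_{p_k}^{(k)} \in N$ with analogous orthogonality relations and $\sigma_t^\varphi(s_j^{(k)}) = p_k^{-it}s_j^{(k)}$.

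Next I set $V_k := \sum_{j=1}^{p_k} u_{k,j}\,(s_j^{(k)})^*$ for $k=1,\dots,m$. Using both families' orthogonality one computes $V_k^*V_k = 1$ and $V_kV_k^* = f_{kk} := \sum_j u_{k,j}u_{k,j}^*$, where the $f_{kk}$ are orthogonal projections summing to $1$ with $\varphi(f_{kk}) = p_k/L = q_k$. Tracking eigenvalues yields $\sigma_t^\varphi(V_k) = L^{-it}\,p_k^{it}\,V_k = q_k^{it}V_k$. Setting $f_{kl} := V_kV_l^*$, the family $\{f_{kl}\}_{k,l=1}^m$ is a system of matrix units for a type I$_m$ subfactor $F \subset N$, and (c) is immediate from $\varphi(f_{kk}) = q_k$. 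For (a), the density of $\varphi|_F$ with respect to $\mathrm{Tr}_F$ is $h := \sum_k q_kf_{kk}$, so $\sigma_t^{\varphi|_F}(f_{kl}) = (q_k/q_l)^{it}f_{kl}$; this coincides with $\sigma_t^\varphi(f_{kl}) = q_k^{it}q_l^{-it}f_{kl}$ from the $V_k$ computation. This modular matching, combined with Takesaki's theorem (which produces a $\varphi$-preserving conditional expectation $N \to F$) and the canonical tensor decomposition $N \cong F \otimes F^{\text{c}}$, gives $\varphi = \varphi|_F \otimes \varphi|_{F^{\text{c}}}$.

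The main obstacle is (b). From (a), the modular action on $N \cong F \otimes F^{\text{c}}$ factorizes as $\sigma_t^\varphi = \sigma_t^{\varphi|_F} \otimes \sigma_t^{\varphi|_{F^{\text{c}}}}$, so the centralizer decomposes along $\sigma^{\varphi|_F}$-eigenspaces as $N_\varphi = \bigoplus_\mu F^\mu \otimes (F^{\text{c}})^{1/\mu}$, where $F^\mu$ and $(F^{\text{c}})^\nu$ are the relevant spectral subspaces. A direct calculation shows that cutting by the minimal projection $f_{11} \in F_{\varphi|_F} \subset N_\varphi$ kills every component with $\mu \neq 1$ (since $f_{11}F^\mu f_{11} = 0$ unless $\mu = 1$), yielding
\[
(f_{11}\otimes 1)\,N_\varphi\,(f_{11}\otimes 1) \;\cong\; (F^{\text{c}})_{\varphi|_{F^{\text{c}}}}.
\]
For any $m' \in \mathbb{N}$, the $\mathbb{Q}$-stability of $\varphi$ on $N$ furnishes type I subfactors with tracial restriction inside $N_\varphi$; transferring such a subfactor through the corner above -- using the conjugacy of type I$_{m'}$ subfactors in the finite tracial structure of $N_\varphi$, applied after rescaling by $\varphi(f_{11}) = q_1$ -- produces a type I$_{m'}$ subfactor of $(F^{\text{c}})_{\varphi|_{F^{\text{c}}}}$ with trace, which is exactly the data of $m'$ isometries in $F^{\text{c}}$ witnessing the $\mathbb{Q}$-stability of $\varphi|_{F^{\text{c}}}$. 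The technical care here -- ensuring the transferred subfactor actually sits inside the corner rather than merely intersecting it -- is the delicate point of the proof.
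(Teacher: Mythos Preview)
Your construction of the isometries $V_k$ and the matrix units $f_{kl}=V_kV_l^*$ is exactly the paper's Lemma~4.6, and your verification of (a) and (c) is essentially the paper's argument. The divergence is in (b), and there your argument has a genuine gap.

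The claim that a type I$_{m'}$ subfactor of $(F^{\text{c}})_{\varphi|_{F^{\text{c}}}}$ with tracial restriction ``is exactly the data of $m'$ isometries in $F^{\text{c}}$ witnessing the $\mathbb{Q}$-stability of $\varphi|_{F^{\text{c}}}$'' is false. $\mathbb{Q}$-stability demands isometries $w_i$ with $\varphi|_{F^{\text{c}}}\,w_i=\tfrac{1}{m'}w_i\,\varphi|_{F^{\text{c}}}$, i.e.\ eigenvectors of the modular group with eigenvalue $(m')^{-it}$; matrix units sitting in the centralizer are eigenvectors with eigenvalue $1$. These are different objects, and the implication fails already for a II$_1$ factor with its trace: the centralizer is the whole algebra (so full of type I$_{m'}$ subfactors), yet the trace is never $\mathbb{Q}$-stable for $m'\ge 2$. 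Your transfer step is also unjustified, since nothing here forces $N_\varphi$ to be a factor, so ``conjugacy of type I$_{m'}$ subfactors'' is not available.

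The fix is that you have already done all the work. Define $\Phi_0\colon N\to N$ by $\Phi_0(x)=\sum_{k=1}^m V_k x V_k^*$. Using $V_k^*V_l=\delta_{kl}1$ and $V_kV_l^*=f_{kl}$, one checks directly that $\Phi_0$ is a unital $*$-isomorphism of $N$ onto $F^{\text{c}}$, and from $\varphi V_k=q_kV_k\varphi$ one gets $\varphi(\Phi_0(x))=\sum_k q_k\varphi(x)=\varphi(x)$. Thus $(F^{\text{c}},\varphi|_{F^{\text{c}}})\cong (N,\varphi)$, and (b) follows immediately. This is precisely the paper's proof: it packages the $V_k$ into an isomorphism $\Phi\colon N\otimes M_m(\mathbb{C})\to N$ with $\varphi\circ\Phi=\varphi\otimes\omega$, sets $F=\Phi(\mathbb{C}\otimes M_m(\mathbb{C}))$, and observes that $\Phi_0(x)=\Phi(x\otimes 1)$ identifies $(N,\varphi)$ with $(F^{\text{c}},\varphi|_{F^{\text{c}}})$.
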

We prove first:  
\begin{lemma}\label{lem: UH4.6}
Let $\varphi$ be a $\mathbb{Q}$-stable normal faithful state on a von Neumann algebra $N$, and let $q_1,\dots,q_m$ be positive rational numbers with sum 1. Then there exist isometries $u_1,\dots,u_m\in N$ with orthogonal ranges, such that 
\begin{align*}
\sum_{i=1}^m&u_iu_i^*=1,\\
\varphi u_i&=q_iu_i\varphi,\ \ \ \ \ \ \ i=1,\dots,m.
\end{align*}
\end{lemma}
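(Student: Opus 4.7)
The plan is to obtain the $u_i$'s by combining two separate applications of the $\mathbb{Q}$-stability hypothesis at two different sizes. First I would bring the $q_i$'s to a common denominator: write $q_i=p_i/p$ with $p_1,\ldots,p_m$ positive integers and $p=\sum_{i=1}^m p_i$, and set $s_i:=p_1+\cdots+p_{i-1}$, so that the blocks $J_i:=\{s_i+1,\ldots,s_i+p_i\}$ partition $\{1,\ldots,p\}$.

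Applying Definition \ref{def: UH4.1} with $p$ in place of $m$ produces isometries $w_1,\ldots,w_p\in N$ with $\sum_{j=1}^p w_jw_j^*=1$ and $\varphi w_j=(1/p)w_j\varphi$. For each $i$, applying Definition \ref{def: UH4.1} again with $p_i$ in place of $m$ produces isometries $v_{i,1},\ldots,v_{i,p_i}\in N$ with $\sum_k v_{i,k}v_{i,k}^*=1$ and $\varphi v_{i,k}=(1/p_i)v_{i,k}\varphi$ (if $p_i=1$, take $v_{i,1}=1$). I would then define
\[u_i:=\sum_{k=1}^{p_i}w_{s_i+k}\,v_{i,k}^*,\qquad i=1,\ldots,m.\]

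Verification is direct. Using $w_j^*w_l=\delta_{jl}$, one gets $u_i^*u_i=\sum_k v_{i,k}v_{i,k}^*=1$, so $u_i$ is an isometry. Using $v_{i,k}^*v_{i,l}=\delta_{kl}$, one gets $u_iu_i^*=\sum_{k=1}^{p_i}w_{s_i+k}w_{s_i+k}^*=\sum_{j\in J_i}w_jw_j^*$; since the $J_i$'s partition $\{1,\ldots,p\}$, the projections $u_iu_i^*$ are pairwise orthogonal and $\sum_i u_iu_i^*=\sum_{j=1}^p w_jw_j^*=1$. For the eigenvalue relation, taking adjoints in $N_*$ of the identity $\varphi v_{i,k}=(1/p_i)v_{i,k}\varphi$ gives $\varphi v_{i,k}^*=p_i\,v_{i,k}^*\varphi$; combined with $\varphi w_{s_i+k}=(1/p)w_{s_i+k}\varphi$ this yields
\[\varphi u_i=\sum_k \varphi\,w_{s_i+k}v_{i,k}^*=\frac{1}{p}\sum_k w_{s_i+k}\,\varphi v_{i,k}^*=\frac{p_i}{p}\sum_k w_{s_i+k}v_{i,k}^*\,\varphi=q_i\,u_i\,\varphi.\]

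There is no serious obstacle; the argument is combinatorial bookkeeping once the ``two-level'' construction has been spotted. The only subtle point is the sign flip when passing from $v_{i,k}$ to $v_{i,k}^*$ in the eigenvalue relation, which is precisely what makes the two factors $1/p$ and $p_i$ combine into the desired coefficient $q_i=p_i/p$.
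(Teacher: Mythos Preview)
Your proof is correct and follows essentially the same route as the paper: the paper also writes $q_i=p_i/p$, picks for each $i$ isometries $v_{i1},\dots,v_{ip_i}$ with $\varphi v_{ij}=(1/p_i)v_{ij}\varphi$ and $\sum_j v_{ij}v_{ij}^*=1$, picks a family $w_{ij}$ (indexed by the $p$-element set $\{(i,j)\}$) with $\varphi w_{ij}=(1/p)w_{ij}\varphi$ and $\sum_{i,j}w_{ij}w_{ij}^*=1$, and sets $u_i=\sum_j w_{ij}v_{ij}^*$. Your linear indexing $w_{s_i+k}$ in place of the paper's double indexing $w_{ij}$ is the only cosmetic difference, and your verification (including the ``sign flip'' $\varphi v_{i,k}^*=p_i\,v_{i,k}^*\varphi$) matches the paper's exactly.
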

\begin{proof}
We can choose integers $p,p_1,\dots,p_m\in \mathbb{N}$ such that 
\begin{equation*}
q_i=\frac{p_i}{p},\ \ \ \ \ i=1,\dots,m.
\end{equation*}
Note that $\sum_{i=1}^mp_i=p$. By Definition \ref{def: UH4.1}, for each $i\in \{1,\dots,m\}$ we can choose $p_i$ isometries $v_{i1},\dots, v_{ip_i}$ in $N$ with orthogonal ranges, such that  
\begin{equation*}
\sum_{j=1}^{p_i}v_{ij}v_{ij}^*=1\ \ \ \text{\ and\ }\ \ \varphi v_{ij}=\frac{1}{p_i}v_{ij}\varphi,\ \ \ \ j=1,\dots,p_i.
\end{equation*}
Moreover, since the set $\{(i,j);\ 1\le i\le m,\ 1\le j\le p_i\}$ contains $\sum_{i=1}^mp_i=p$ elements, we can also find isometries $w_{ij}\in N\ (1\le i\le m,\ 1\le j\le p_i)$ with orthogonal ranges, such that 
\begin{equation*}
\sum_{i=1}^m\sum_{j=1}^{p_i}w_{ij}w_{ij}^*=1\ \ \ \text{\ and\ }\ \ \ \varphi w_{ij}=\frac{1}{p}w_{ij}\varphi,\ \ 1\le i\le m,\ 1\le j\le p_i. 
\end{equation*}
Put now 
\begin{equation*}
u_i:=\sum_{j=1}^{p_i}w_{ij}v_{ij}^*,\ \ \ \ \ i=1,\dots,m. 
\end{equation*}
Then 
\begin{align*}
u_i^*u_i&=\sum_{j=1}^{p_i}v_{ij}v_{ij}^*=1,\\
\sum_{i=1}^{m}u_iu_i^*&=\sum_{i=1}^m\sum_{j=1}^{p_i}w_{ij}w_{ij}^*=1,
\end{align*}
and since $\varphi w_{ij}=\frac{1}{p}w_{ij}\varphi$ and $v_{ij}^*\varphi =\frac{1}{p_i}\varphi v_{ij}^*$ for all $(i,j)$, we get 
\begin{equation*}
\varphi u_i=\sum_{j=1}^{p_i}\varphi w_{ij}v_{ij}^*=\sum_{j=1}^{p_i}\frac{p_i}{p}w_{ij}v_{ij}^*\varphi=q_iu_i\varphi.
\end{equation*}
This proves Lemma \ref{lem: UH4.6}. 
\end{proof}
\begin{proof}[Proof of Theorem \ref{thm: UH4.5}] 
Choose $m$ isometries $u_1,\dots,u_m\in N$ satisfying the conditions in Lemma \ref{lem: UH4.6}. We can define a $*$-isomorphism $\Phi$ of $N\otimes M_m(\mathbb{C})$ onto $N$ by 
\begin{equation*}
\Phi\left (\sum_{i,j=1}^mx_{ij}\otimes e_{ij}\right ):=\sum_{i,j=1}^mu_ix_{ij}u_j^*,
\end{equation*}
where $(e_{ij})_{i,j=1}^m$ are the matrix units in $M_m(\mathbb{C})$. Then using $\varphi u_i=\lambda_i u_i \varphi$, we get 
\begin{align*}
(\varphi\circ \Phi)\left (\sum_{i,j=1}^mx_{ij}\otimes e_{ij}\right )&=\sum_{i,j=1}^m\varphi(u_ix_{ij}u_j^*)\notag \\
&=\sum_{i,j=1}^mq_i\varphi(x_{ij}u_j^*u_i)\notag \\
&=\sum_{i=1}^mq_i\varphi(x_i).
\end{align*}
Hence 
\begin{equation*}
\varphi \circ \Phi=\varphi \otimes \omega,
\end{equation*}
where $\omega$ is the state on $M_m(\mathbb{C})$ for which 
\begin{equation*}
\frac{\text{d}\omega}{\text{d}\text{Tr}}=\begin{pmatrix}
q_1 &0 & \cdots & 0\\
0  & q_2 & &\vdots \\
\vdots & & \ddots & 0\\
 0 &\cdots & 0 & q_m
\end{pmatrix}.
\end{equation*}
Put now $F:=\Phi(\mathbb{C}\otimes M_m(\mathbb{C}))$. Then the relative commutant of $F$ in $N\otimes M_m(\mathbb{C})$ is $\Phi(N\otimes \mathbb{C})$. Since $\varphi\circ \Phi=\varphi\otimes \omega$, $\varphi$ itself is a tensor product state with respect to the decomposition 
\begin{equation*}
N=F\cdot F^{\text{c}}\cong F\otimes F^{\text{c}}.
\end{equation*}
Moreover, $\text{d}\varphi|_F/\text{dTr}_F$ has eigenvalues $(q_1,\dots,q_m)$. Let $\Phi_0$ be the isomorphism of $N$ onto $F^{\text{c}}$ given by
\begin{equation*}
\Phi_0(x)=\Phi(x\otimes 1),\ \ \ \ \ x\in N.
\end{equation*} 
Then $\varphi|_F\circ \Phi_0=\varphi$. Therefore $\varphi|_{F^{\text{c}}}$ is a $\mathbb{Q}$-stable normal faithful state on $F^{\text{c}}$. 
\end{proof}

\section{Proof of Main Theorem}
In this section we prove the main result of the paper: 
\begin{theorem}\label{thm: UH5.6}
Every injective factor $N$ of type {\rm{III}}$_1$ on a separable Hilbert space is isomorphic to the Araki--Woods factor $R_{\infty}$. 
\end{theorem}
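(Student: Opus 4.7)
The strategy is to verify the Connes--Woods product condition of Proposition~\ref{prop: CW} for $N$. Once verified, Proposition~\ref{prop: CW} gives that $N$ is ITPFI, and since $N$ is of type~III$_1$ on a separable Hilbert space, the Araki--Woods classification forces $N \cong R_\infty$. I would follow the five-step program of the introduction. Fix a $\mathbb{Q}$-stable normal faithful state $\varphi$ on $N$ (Theorem~\ref{thm: UH4.2}) and invoke Theorem~\ref{thm: B} to secure $B_\varphi = \mathbb{C}1$. The task becomes: given unitaries $u_1,\ldots,u_n \in N$ and $\varepsilon > 0$, produce a finite-dimensional subfactor $F_1 \subset N$ and unitaries $v_1',\ldots,v_n' \in F_1$ with $\|v_k' - u_k\|_\varphi < \varepsilon$ and $\|\varphi - \varphi|_{F_1} \otimes \varphi|_{F_1^{\text{c}}}\| < \varepsilon$.

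\medskip

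Steps~1 and~2 are the matrix-algebra setup. Theorem~\ref{thm: UH3.1} supplies, for prescribed $\gamma, \delta > 0$, an integer $m$, a u.c.p.\ map $T_0 \colon M_m(\mathbb{C}) \to N$ with $\psi := \varphi \circ T_0$ satisfying $\|\sigma_t^\varphi \circ T_0 - T_0 \circ \sigma_t^\psi\| \le \gamma |t|$, and unitaries $v_k^0 \in M_m(\mathbb{C})$ with $\|T_0(v_k^0) - u_k\|_\varphi < \delta$. To embed this matrix algebra rigidly into $N$, I would approximate the eigenvalues of $\mathrm{d}\psi/\mathrm{d}\mathrm{Tr}$ by positive rationals $(q_1,\ldots,q_m)$ and use Theorem~\ref{thm: UH4.5} to locate a type~I$_m$ subfactor $F \subset N$ with $\varphi = \varphi|_F \otimes \varphi|_{F^{\text{c}}}$ and $\mathrm{d}\varphi|_F/\mathrm{d}\mathrm{Tr}_F$ of the prescribed spectrum. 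A suitable isomorphism $F \cong M_m(\mathbb{C})$ then transports $T_0$ and the $v_k^0$ to a u.c.p.\ map $T \colon F \to N$ and unitaries $v_k \in F$ realizing Step~2, with the rational approximation contributing only $O(\gamma)$ of additional error to the modular estimate.

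\medskip

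Step~3 is the analytic core: from $T$ and the splitting $\varphi = \varphi|_F \otimes \varphi|_{F^{\text{c}}}$ I extract a finite family $a_1,\ldots,a_p \in N$ satisfying (a)--(d). The $a_i$ are manufactured from a Stinespring dilation of $T$ together with the matrix-coefficient decomposition of the finite-dimensional $F$; the near intertwining $\sigma_t^\varphi \circ T \approx T \circ \sigma_t^{\varphi|_F}$ controls the left-right commutator $\sum_i \|a_i \xi_\varphi - \xi_\varphi a_i\|^2$ in~(c), while $\|T(v_k) - u_k\|_\varphi < \delta$ yields~(d); the normalizations $T(1) = 1$ and $\varphi \circ T = \varphi$ account for~(a) and the conditional-expectation form~(b). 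To upgrade (b) to (b'), I would invoke Proposition~\ref{prop: bicentralizer condition} with $B_\varphi = \mathbb{C}1$: averaging $\sum_i a_i a_i^*$ by conjugation by unitaries $u \in \mathcal{U}(F^{\text{c}})$ with $\|u\xi_\varphi - \xi_\varphi u\|$ small pushes the value into arbitrary $\sigma$-weak neighborhoods of $\varepsilon_{F,\varphi}(\sum_i a_i a_i^*)$, and applying the same averaging to the $a_i$ themselves preserves (a), (c), and (d) because the averaging unitaries lie in $F^{\text{c}}$ and almost commute with $\xi_\varphi$. The delicate bookkeeping of this step---in particular, producing the $a_i$ with quantitative control of all four conditions simultaneously---I expect to be the main technical obstacle.

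\medskip

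Finally, Steps~4 and~5 are essentially automatic once (a), (b'), (c), (d) are in place. Theorem~\ref{thm: almost unitary equivalence}, in the approximate form of Remark~\ref{rem: approximate related}, converts the $\delta'$-relatedness encoded in these conditions into a unitary $w \in N$ with $\|w\xi_\varphi - \xi_\varphi w\| < \varepsilon$ and $\|wu_k - v_k w\|_\varphi < \varepsilon$ for all $k$. Setting $F_1 := w^* F w$ and $v_k' := w^* v_k w \in F_1$ gives $\|v_k' - u_k\|_\varphi = \|v_k w - w u_k\|_\varphi < \varepsilon$. The near-commutation $\|w\xi_\varphi - \xi_\varphi w\| < \varepsilon$ translates via the Araki--Powers--St\o rmer inequality into $\|w\varphi w^* - \varphi\|$ being small, which combined with the exact splitting $\varphi = \varphi|_F \otimes \varphi|_{F^{\text{c}}}$ yields $\|\varphi - \varphi|_{F_1} \otimes \varphi|_{F_1^{\text{c}}}\| < \varepsilon$. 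Applying Proposition~\ref{prop: CW} concludes the proof.
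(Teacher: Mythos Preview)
Your outline correctly identifies the five-step strategy and matches the paper's approach in Steps~1, 2, 4, and~5 (these are Lemmas~\ref{lem: UH5.3}--\ref{lem: UH5.5} and the final proof). The genuine gap is in Step~3, which you flag as ``the main technical obstacle'' but describe by a mechanism that does not work.

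Specifically, you propose manufacturing the $a_i$ from a Stinespring dilation of $T$ ``together with the matrix-coefficient decomposition of the finite-dimensional $F$''. A matrix-unit decomposition of $F$ does nothing to produce the almost-centrality~(c): the bound $\|\sigma_t^{\varphi}\circ T - T\circ\sigma_t^{\varphi|_F}\|\le \delta|t|$ is a \emph{derivative} estimate on a one-parameter family, and translating it into an $L^2$ bound on $\|a_i\xi_\varphi-\xi_\varphi a_i\|$ requires a genuine analytic device. The paper's device (Lemma~\ref{lem: UH5.1}) is to write $T(x)=a^*xa$ with a single isometry $a$ and then smear $a$ by the modular group against a Gaussian of variance $\delta$, producing a continuous family $a(t)$. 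Plancherel converts the derivative bound into $\int\|a(t)\xi_\varphi - e^{-t/2}\xi_\varphi a(t)\|^2\,dt<\delta/8$, and~(b) emerges only with an $e^{-t}$ weight inside $\varepsilon_{F,\varphi}$.

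The second ingredient you miss is why $\mathbb{Q}$-stability of $\varphi|_{F^{\mathrm c}}$ (Theorem~\ref{thm: UH4.5}(b), not just~(a) and~(c)) matters. After discretizing $a(t)$ to $a_j$, one still has the unwanted weights $e^{-jh}$ in~(b) and~(c). Choosing $h$ with $e^{h}\in\mathbb{Q}$ and using Lemma~\ref{lem: UH4.6} for $\varphi|_{F^{\mathrm c}}$ produces isometries $c_{ji}\in F^{\mathrm c}$ with $\varphi c_{ji}=e^{-jh}c_{ji}\varphi$; setting $b_{ji}=c_{ji}a_j$ kills the weights simultaneously (Lemma~\ref{lem: UH5.2} handles~(b), and $\xi_\varphi c_{ji}=e^{-jh/2}c_{ji}\xi_\varphi$ handles~(c)). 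Without this step you cannot pass from the weighted conditions of Lemma~\ref{lem: UH5.1} to the unweighted~(a)--(d). Your averaging argument for (b)$\Rightarrow$(b') via Proposition~\ref{prop: bicentralizer condition} is correct and is exactly Lemma~\ref{lem: UH5.4}.
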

We need preparations. In this section, for each von Neumann algebra $N$, we fix a standard form $(N,H,J,\mathcal{P}^{\natural})$. For each $\varphi\in (N_*)_+$, we denote by $\xi_{\varphi}$ the unique representing vector in $\mathcal{P}^{\natural}$ \cite{Haagerup75}.  
\begin{lemma}\label{lem: UH5.1}
Let $N$ be a properly infinite factor with separable predual and with a normal faithful state $\varphi$, let $F$ be a finite dimensional $\sigma^{\varphi}$-invariant subfactor of $N$, and let $T\colon F\to N$ be a unital completely positive map, such that $\varphi\circ T=\varphi$ and 
\begin{equation}
\|\sigma_t^{\varphi}\circ T-T\circ \sigma_t^{\varphi|_F}\|\le \delta |t|,\ \ \ \ \ \ t\in \mathbb{R},\label{eq: 5.1condition on T}
\end{equation}
where $\delta>0$ is a constant. Then there exists a norm-continuous map $a\colon \mathbb{R}\to N$ such that 
\begin{list}{}{}
\item[{\rm{(a)}}] $\displaystyle \int_{-\infty}^{\infty}a(t)^*a(t)\,dt=1$ ($\sigma$-strongly), 
\item[{\rm{(b)}}] $\displaystyle \int_{-\infty}^{\infty}e^{-t}\varepsilon_{F,\varphi}(a(t)a(t)^*)\,dt=1$ ($\sigma$-strongly), 
\item[{\rm{(c)}}] $\displaystyle \int_{-\infty}^{\infty}\|a(t)\xi_{\varphi}-e^{-t/2}\xi_{\varphi}a(t)\|^2\,dt<\frac{\delta}{8}$,
\item[{\rm{(d)}}] $\displaystyle \left \|T(x)-\int_{-\infty}^{\infty}a(t)^*xa(t)\,dt\right \|\le \delta^{\frac{1}{2}}\|x\|,\ \ \ \ x\in F$, 
\end{list}
where $\varepsilon_{F,\varphi}$ is the normal faithful conditional expectation of $N$ onto $F$ that leaves the state $\varphi$ invariant. 
\end{lemma}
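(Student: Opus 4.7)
The strategy is to realize the UCP map $T$ approximately as a continuous Stinespring decomposition $T(x)\approx\int a(t)^* x a(t)\,dt$ with $a(t)$ concentrated in the spectral subspace of $\sigma^\varphi$ at frequency $t$, in the spirit of the crossed product Plancherel arguments of Lemma~\ref{lem: UH3.2}. Since $F$ is finite-dimensional and $N$ is properly infinite, $T$ admits a finite Kraus decomposition $T(x) = \sum_{j=1}^n b_j^* x b_j$ with $b_j\in N$ and $\sum_j b_j^* b_j = 1$. The key point is that condition (c) is equivalent, via the standard identification $a\xi_\varphi = c\xi_\varphi a \Longleftrightarrow \sigma_s^\varphi(a) = c^{2is}a$ for $c>0$, to the assertion that each $a(t)$ lies approximately in the spectral subspace of $\sigma^\varphi$ at frequency $t$. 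Thus the task is to spectrally decompose the Kraus operators $b_j$ along $\sigma^\varphi$ and package the result into a function $t\mapsto a(t)\in N$.

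To carry this out, fix an $L^2$-unit kernel $h$ on $\mathbb{R}$ (to be chosen) and, using proper infiniteness of $N$, pick isometries $v_1,\dots,v_n\in N$ with $v_j^*v_k = \delta_{jk}$ and $\sum_j v_jv_j^* = 1$. Setting
\[
a(t) := \frac{1}{\sqrt{2\pi}}\,\sum_{j=1}^n v_j \int_\mathbb{R} e^{ist}\, h(s)\,\sigma_s^\varphi(b_j)\, ds,
\]
the isometries decouple cross-terms in $a(t)^*a(t)$, and a Plancherel computation using $v_j^*v_k = \delta_{jk}$ and $\sum_j b_j^*b_j = 1$ yields $\int a(t)^*a(t)\,dt = \|h\|_2^2 = 1$, giving (a). Condition (b) then follows by combining the KMS identity
\[
\varphi(a^* x a) = e^{-t}\,\varphi|_F\bigl(x\,\varepsilon_{F,\varphi}(aa^*)\bigr),\qquad x\in F,
\]
valid for any $a$ at spectral frequency $t$, with the hypothesis $\varphi\circ T = \varphi|_F$; integrating in $t$ and matching $\varphi(T(x))$ with $\varphi(x)$ forces $\int e^{-t}\varepsilon_{F,\varphi}(a(t)a(t)^*)\,dt = 1$. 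For (c), the approximate covariance hypothesis $\|\sigma_s^\varphi T - T\sigma_s^{\varphi|_F}\| \le \delta|s|$ bounds the deviation of $\sigma_s^\varphi(b_j)$ from its frequency-zero component in an $L^\infty$-sense, and Fourier smearing through $h$ converts this into the required $L^2$-bound $\delta/8$. Property (d) is obtained by expanding $\int a(t)^* x a(t)\,dt$ back in terms of the $b_j$'s; a Cauchy--Schwarz step produces the square-root loss $\delta^{1/2}$ from the $L^\infty$ covariance defect.

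The main obstacle is that (a) and (b) are required as \emph{exact} identities while the natural construction above realizes them only approximately. Condition (b) is especially delicate, since the $e^{-t}$ weighting couples the $F$-bimodule structure to the modular spectrum. I expect this to be handled by a small renormalization of $a(\cdot)$ after the main construction --- for instance, conjugating by an appropriate positive element of $F$ and rescaling in $L^2$ --- and verifying that this renormalization worsens (c) and (d) only by controlled constants. The square-root loss $\delta^{1/2}$ in (d) is sharp, reflecting the passage between an $L^\infty$ covariance bound and $L^2$ spectral information via Cauchy--Schwarz; the numerical constant $1/8$ in (c) then falls out of a specific choice of the kernel $h$.
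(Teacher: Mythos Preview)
Your overall strategy---Gaussian smearing of a Stinespring/Kraus representation along the modular flow, then Plancherel---is the right one, and matches the paper. But there are two genuine gaps.

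\textbf{Single isometry, not a bundled Kraus family.} The paper's key simplification is that, because $N$ is properly infinite and $F$ is finite-dimensional, one may realize $T$ as $T(x)=a^*xa$ for a \emph{single} isometry $a\in N$ (this is \cite[Proposition 2.1]{Haagerup85}). One then sets
\[
a(t)=\frac{1}{\sqrt{2\pi}}\int_{\mathbb{R}} e^{-is(t-\delta/4)}g(s)\,\sigma_s^{\varphi}(a)\,ds,
\qquad g(s)=(\delta/\pi)^{1/4}e^{-\delta s^2/2}.
\]
Your bundling $a(t)=\sum_j v_j\cdot(\text{smeared }b_j)$ with auxiliary isometries $v_j\in N$ runs into trouble at (c): to compute $\xi_{\varphi}a(t)=\sigma_{-i/2}^{\varphi}(a(t))\xi_{\varphi}$ you need $a(t)$ to be $\sigma^{\varphi}$-analytic, and the $v_j$ need not be. Placing the $v_j$ in the centralizer $N_{\varphi}$ would fix this, but for a general state on a properly infinite factor the centralizer can be far too small to contain such isometries. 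The single-isometry form eliminates the problem entirely.

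\textbf{The roles of (c) and (d) are reversed.} You attribute (c) to the covariance bound $\|\sigma_s^{\varphi}T-T\sigma_s^{\varphi|_F}\|\le\delta|s|$. In fact (c) does not use that hypothesis at all: it is a pure Gaussian/modular computation. Using $\xi_{\varphi}a(t)=\sigma_{-i/2}^{\varphi}(a(t))\xi_{\varphi}$ and Plancherel one gets
\[
\int_{\mathbb{R}}\|a(t)\xi_{\varphi}-e^{-t/2}\xi_{\varphi}a(t)\|^2\,dt
=\int_{\mathbb{R}}f(t)^2\bigl(1-e^{-\delta/8+t/2}\bigr)^2\,dt
=2(1-e^{-\delta/16})<\tfrac{\delta}{8},
\]
where $f=\widehat{g}$. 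It is (d), not (c), that uses the covariance hypothesis: $\int a(t)^*xa(t)\,dt=\int g(s)^2\,\sigma_s^{\varphi}T\sigma_{-s}^{\varphi}(x)\,ds$, and then $\|\sigma_s^{\varphi}T\sigma_{-s}^{\varphi}(x)-T(x)\|\le\delta|s|\|x\|$ together with $\int|s|g(s)^2\,ds=(\delta/\pi)^{1/2}$ gives the $\delta^{1/2}$ bound. The covariance hypothesis says nothing about individual Kraus operators $b_j$, so your route to (c) would not go through even with the single-isometry fix.

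Finally, no renormalization is needed: (a) is exact because $a^*a=1$ and $\|g\|_2=1$, and (b) comes out \emph{exactly} equal to $1$ by a KMS computation, provided one includes the phase shift $t\mapsto t-\delta/4$ in the definition of $a(t)$; this shift produces a factor $e^{-\delta/4}$ that exactly cancels $\int g(s+i)\overline{g(s)}\,ds=e^{\delta/4}$.
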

\begin{proof}
Let $f$ be the function 
\begin{equation*}
f(t):=(\pi \delta)^{-\frac{1}{4}}\exp \left (-\frac{1}{2\delta}t^2\right ),\ \ \ \ \ t\in \mathbb{R},
\end{equation*}
and let $g$ be the Fourier-transformed of $f$:
\begin{equation*}
g(s):=\left (\frac{\delta}{\pi}\right)^{\frac{1}{4}}\exp \left (-\frac{\delta}{2}s^2\right ),\ \ \ \ \ s\in \mathbb{R}.
\end{equation*}
Note that 
\begin{equation*}
\int_{-\infty}^{\infty}f(t)^2dt=\int_{-\infty}^{\infty}g(s)^2\,ds=1.
\end{equation*}
By \cite[Proposition 2.1]{Haagerup85}, there exists an operator $a\in N$ such that 
\begin{equation*}
T(x)=a^*xa,\ \ \ \ \ x\in F. 
\end{equation*}
In particular, $a^*a=1$, i.e., $a$ is an isometry. Put
\begin{equation*}
a(t)=\frac{1}{\sqrt{2\pi}}\int_{-\infty}^{\infty}e^{-is(t-\delta/4)}g(s)\sigma_s^{\varphi}(a)\,ds,\ \ \ \ t\in \mathbb{R}.
\end{equation*}
Since $t\mapsto e^{-is(t-\delta/4)}g(s)$ is a continuous map from $\mathbb{R}$ to $L^1(\mathbb{R})$, the map $t\mapsto a(t)$ is a norm-continuous map from $\mathbb{R}$ to $N$. Using the Plancherel formula in $L^2(\mathbb{R},H)$, we get 
\begin{equation*}
\int_{-\infty}^{\infty}\|a(t)\xi\|^2\,dt=\int_{-\infty}^{\infty}g(s)^2\|\sigma_s^{\varphi}(a)\xi\|^2\,ds=\|\xi\|^2
\end{equation*}
for all $\xi\in H$. Hence 
\begin{equation*}
\int_{-\infty}^{\infty}a(t)^*a(t)\,dt=1\ \ \ \ (\sigma\text{-weakly}).
\end{equation*}
Since the convergence of the integral is monotone, we get (a). Using again the Plancherel formula, we get for $\xi,\eta \in H$ and $x\in F$, 
\begin{align*}
\int_{-\infty}^{\infty}\nai{xa(t)\xi}{a(t)\eta}\,dt&=\int_{-\infty}^{\infty}g(s)^2\nai{x\sigma_s^{\varphi}(a)\xi}{\sigma_s^{\varphi}(a)\eta}\,ds\notag \\
&=\int_{-\infty}^{\infty}g(s)^2\nai{\sigma_s^{\varphi}\circ T\circ \sigma_{-s}^{\varphi}(x)\xi}{\eta}\,ds.
\end{align*}
Hence for $x\in F$,
\begin{equation}
\int_{-\infty}^{\infty}a(t)^*xa(t)\,dt=\int_{-\infty}^{\infty}g(s)^2\sigma_s^{\varphi}\circ T\circ \sigma_{-s}^{\varphi}(x)\,ds.\label{eq: integral of a(t)*xa(t)}
\end{equation}
Note that the left hand side of (\ref{eq: integral of a(t)*xa(t)}) converges $\sigma$-strongly, because $F=\text{span}(F_+)$ and for $x\in F_+$, the integral converges $\sigma$-weakly and the convergence is monotone.   
Therefore by (\ref{eq: 5.1condition on T}), for each $x\in F$ we get
\begin{align*}
\left \|T(x)-\int_{-\infty}^{\infty}a(t)^*xa(t)dt\right \|&\le \delta \|x\|\int_{-\infty}^{\infty}|s|g(s)^2\,ds\notag \\
&=\left (\frac{\delta}{\pi}\right )^{\frac{1}{2}}\|x\|\notag\\
&\le \delta^{\frac{1}{2}}\|x\|.
\end{align*}
This proves (d). Since $g(s)$ has the analytic extension to the function $g\colon \mathbb{C}\to \mathbb{C}$, and since the integrals
\begin{equation*}
\int_{-\infty}^{\infty}|g(s+iu)|\,ds=\left(\frac{4\pi}{\delta}\right )^{\frac{1}{4}}e^{\frac{\delta}{2}u^2},\ \ \ \ u\in \mathbb{R}
\end{equation*}
are uniformly bounded for $u$ on bounded subsets of $\mathbb{R}$, it follows that $a(t)$ is analytic with respect to $\sigma^{\varphi}$ (in the sense of \cite{PedersenTakesaki73}) and that 
\begin{equation*}
\sigma_{\alpha}^{\varphi}(a(t))=\frac{1}{\sqrt{2\pi}}\int_{-\infty}^{\infty}e^{-i(s-\alpha)(t-\tfrac{\delta}{4})}g(s-\alpha)\sigma_s^{\varphi}(a)\,ds,
\end{equation*}
for all $\alpha\in \mathbb{C}$. To prove (c), we use the equality
\begin{equation*}
\xi_{\varphi}a(t)=J_{\varphi}a(t)^*\xi_{\varphi}=\Delta_{\varphi}^{\frac{1}{2}}a(t)\xi_{\varphi}=\sigma_{-i/2}^{\varphi}(a(t))\xi_{\varphi}.
\end{equation*}
Hence 
\begin{equation*}
e^{-t/2}\xi_{\varphi}a(t)=\frac{e^{-\tfrac{\delta}{8}}}{\sqrt{2\pi}}\int_{-\infty}^{\infty}e^{-is(t-\tfrac{\delta}{4})}g(s+\tfrac{i}{2})\sigma_s^{\varphi}(a)\xi_{\varphi}\,ds.
\end{equation*}
Using the Plancherel formula, we get 
\begin{equation*}
\int_{-\infty}^{\infty}\|a(t)\xi_{\varphi}-e^{-t/2}\xi_{\varphi}a(t)\|^2\,dt=\int_{-\infty}^{\infty}|g(s)-e^{-\tfrac{\delta}{8}}g(s+\tfrac{i}{2})|^2\|a\xi_{\varphi}\|^2\,ds.
\end{equation*}
On the other hand, $g(s+\tfrac{i}{2})$ is the Fourier--Plancherel transformed of $e^{t/2}f(t)$. Therefore the above integral is equal to 
\begin{equation*}
\int_{-\infty}^{\infty}f(t)^2\left (1-e^{-\tfrac{\delta}{8}+\tfrac{t}{2}}\right )^2\,dt.
\end{equation*}
It is easy to compute that for $\gamma \in \mathbb{R}$, 
\begin{equation*}
\int_{-\infty}^{\infty}f(t)^2e^{\gamma t}\,dt=\exp (\tfrac{1}{4}\gamma^2\delta).
\end{equation*}
Therefore 
\begin{align*}
\int_{-\infty}^{\infty}f(t)^2\left (1-e^{-\tfrac{\delta}{8}+\tfrac{t}{2}}\right )^2\,dt&=2(1-e^{-\tfrac{\delta}{16}})\notag \\
&<\frac{\delta}{8}.
\end{align*}
This proves (c). Put now 
\begin{equation*}
A(t):=e^{-t}\varepsilon_{F,\varphi}(a(t)a(t)^*),\ \ \ \ \ t\in \mathbb{R}.
\end{equation*}
Since $\varepsilon_{F,\varphi}$ is a normal faithful conditional expectation of $N$ onto $F$ that leaves $\varphi$ invariant, we have for $x\in F$, that 
\begin{equation*}
\varphi(A(t)x)=e^{-t}\varphi(a(t)a(t)^*x).
\end{equation*}
By the KMS-condition, it follows that if $a,b\in N$ and $a$ is $\sigma^{\varphi}$-analytic, then 
\begin{equation*}
\varphi(ab)=\varphi(b\sigma_{-i}^{\varphi}(a))
\end{equation*} 
(cf. \cite[Theorem 3.2]{Haagerup79}). 
Hence for $x\in F$, 
\begin{equation*}
\varphi(A(t)x)=e^{-t}\varphi(a(t)^*x\sigma_{-i}^{\varphi}(a(t))). 
\end{equation*}
Using 
\begin{equation*}
e^{-t}\sigma_{-i}^{\varphi}(a(t))=\frac{e^{-\tfrac{\delta}{4}}}{\sqrt{2\pi}}\int_{-\infty}^{\infty}e^{-is(t-\tfrac{\delta}{4})}g(s+i)\sigma_s^{\varphi}(a)\,ds,
\end{equation*}
we get by the Plancherel formula, that 
\begin{align*}
\int_{-\infty}^{\infty}\varphi(A(t)x)\,dt&=\int_{-\infty}^{\infty}\nai{x(e^{-t}\sigma_{-i}^{\varphi}(a(t)))\xi_{\varphi}}{a(t)\xi_{\varphi}}\,dt\notag \\
&=e^{-\tfrac{\delta}{4}}\int_{-\infty}^{\infty}g(s+i)\overline{g(s)}\nai{x\sigma_s^{\varphi}(a)\xi_{\varphi}}{\sigma_s^{\varphi}(a)\xi_{\varphi}}\,ds.
\end{align*}
Since $\varphi \circ T=\varphi$, it holds that 
\begin{equation*}
\nai{x\sigma_s^{\varphi}(a)\xi_{\varphi}}{\sigma_s^{\varphi}(a)\xi_{\varphi}}=\varphi \circ \sigma_{-s}^{\varphi}\circ T\circ \sigma_{s}^{\varphi}(x)=\varphi(x),
\end{equation*}
Hence 
\begin{equation*}
\int_{-\infty}^{\infty}\varphi(A(t)x)\,dt=e^{-\tfrac{\delta}{4}}\varphi(x)\int_{-\infty}^{\infty}g(s+i)\overline{g(s)}\,ds.
\end{equation*}
Since $g(s+i)$ is the Fourier--Plancherel transformed of $f(t)e^t$, we get
\begin{equation*}
\int_{-\infty}^{\infty}g(s+i)\overline{g(s)}\,ds=\int_{-\infty}^{\infty}|f(t)|^2e^t\,dt=e^{\frac{\delta}{4}}.
\end{equation*}
Since $F$ is finite-dimensional and $\varphi$ is faithful on $F$, every $\psi\in F_*$ is of the form $\varphi(\ \cdot\ x),\ x\in F$. 
This shows that 
\begin{equation*}
\int_{-\infty}^{\infty}\psi(A(t))\,dt=\psi(1),\ \ \ \ \psi \in F_*,
\end{equation*}
that is, we have
\begin{equation*}
\int_{-\infty}^{\infty}A(t)\,dt=1\ \ \ \ (\sigma \text{-weakly}).
\end{equation*}
This proves (b). 
\end{proof}
\begin{lemma}\label{lem: UH5.2}
Let $N,\varphi,F$ and $\varepsilon_{F,\varphi}$ be as in Lemma \ref{lem: UH5.1}. Let $\lambda>0$ and assume that $c_1,\dots,c_s$ are operators in $F^{{\rm{c}}}=F'\cap N$ such that 
\begin{align*}
\varphi c_i&=\lambda c_i\varphi,\ \ \ \ \ \ i=1,\dots,s,\\
\sum_{i=1}^s&c_i^*c_i=1.
\end{align*}
Then for all $x\in N$, 
\begin{equation*}
\varepsilon_{F,\varphi}\left (\sum_{i=1}^sc_ixc_i^*\right )=\lambda \varepsilon_{F,\varphi}(x).
\end{equation*}
\end{lemma}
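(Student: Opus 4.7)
The plan is to reduce the operator identity to a scalar one by pairing with elements of $F$, and then to rearrange using the commutation $[F,F^{\text{c}}]=0$ together with the eigenoperator relation $\varphi c_i=\lambda c_i\varphi$.

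First I would exploit that $F$ is finite dimensional and $\varphi|_F$ is faithful, so an element of $F$ is determined by the linear functionals $\varphi(y\,\cdot\,)$ with $y\in F$. Using the $F$-bimodularity of $\varepsilon_{F,\varphi}$ and $\varphi\circ\varepsilon_{F,\varphi}=\varphi$, one has $\varphi(y\,\varepsilon_{F,\varphi}(z))=\varphi(\varepsilon_{F,\varphi}(yz))=\varphi(yz)$ for all $y\in F$ and $z\in N$. Hence the conclusion of the lemma is equivalent to the scalar identity
\[
\varphi\!\Bigl(y\sum_{i=1}^{s}c_ixc_i^{*}\Bigr)=\lambda\,\varphi(yx),\qquad y\in F,\ x\in N.
\]

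To prove this identity, I would fix $y\in F$, and use $yc_i=c_iy$ (since $c_i\in F^{\text{c}}$) to rewrite $\varphi(yc_ixc_i^{*})=\varphi(c_iyxc_i^{*})$. The hypothesis $\varphi c_i=\lambda c_i\varphi$, read with the convention of Lemma \ref{lem: UH4.6} (i.e.\ $(\varphi a)(w)=\varphi(aw)$ and $(a\varphi)(w)=\varphi(wa)$), unpacks to the identity $\varphi(c_iz)=\lambda\,\varphi(zc_i)$ for every $z\in N$. Applying this with $z=yxc_i^{*}$ yields
\[
\varphi(c_iyxc_i^{*})=\lambda\,\varphi(yxc_i^{*}c_i).
\]
Summing over $i$ and invoking $\sum_{i=1}^{s}c_i^{*}c_i=1$ gives $\sum_i\varphi(yc_ixc_i^{*})=\lambda\,\varphi(yx)$, as required.

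There is essentially no obstacle here: the argument is a few lines of algebraic bookkeeping combining the characterizing property of a $\varphi$-preserving conditional expectation, the commutation between $F$ and $F^{\text{c}}$, and the eigenoperator relation. The only point demanding care is fixing the one-sided multiplication convention for $\varphi c_i=\lambda c_i\varphi$, which is already implicit in the computation of Lemma \ref{lem: UH4.6} and with which the steps above go through verbatim.
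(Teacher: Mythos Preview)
Your proof is correct. It uses exactly the same ingredients as the paper---the commutation $[F,F^{\text{c}}]=0$, the eigenoperator relation $\varphi(c_iz)=\lambda\varphi(zc_i)$, and $\sum_ic_i^{*}c_i=1$---but the reduction step is organized slightly differently. The paper verifies the identity on elements of the form $x=ab$ with $a\in F$ and $b\in F^{\text{c}}$, using that $\varepsilon_{F,\varphi}(z)=\varphi(z)1$ for $z\in F^{\text{c}}$ and then invoking linearity and normality (via $N\cong F\otimes F^{\text{c}}$) to cover general $x$. Your route, testing against $\varphi(y\,\cdot\,)$ for $y\in F$ and using faithfulness of $\varphi|_F$, handles arbitrary $x\in N$ in one stroke and sidesteps that density argument. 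Both proofs are a handful of lines; yours is marginally cleaner in that it needs no appeal to the tensor decomposition of $N$.
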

\begin{proof}
It is sufficient to check the formula for $x\in N$ of the form $x=ab,\ a\in F,\ b\in F^{\text{c}}$. For $z\in F^{\text{c}}$, $\varepsilon_{F,\varphi}(z)$ commutes with every element in $F$. Hence $\varepsilon_{F,\varphi}(z)$ is a scalar multiple of the identity. Using that $\varepsilon_{F,\varphi}$ leaves $\varphi$ invariant, we get
\begin{equation*}
\varepsilon_{F,\varphi}(z)=\varphi(z)1,\ \ \ \ z\in F^{\text{c}}.
\end{equation*}
Therefore 
\begin{align*}
\varepsilon_{F,\varphi}\left (\sum_{i=1}^sc_ixc_i^*\right )&=\varepsilon_{F,\varphi}\left (a\left (\sum_{i=1}^sc_ibc_i^*\right )\right )\notag \\
&=\varphi \left (\sum_{i=1}^sc_ibc_i^*\right )a\notag \\
&=\lambda \varphi \left (\sum_{i=1}^sbc_i^*c_i\right )a\notag \\
&=\lambda \varphi (b)a\notag \\
&=\lambda \varepsilon_{F,\varphi}(x).
\end{align*}
\end{proof}
\begin{lemma}\label{lem: UH5.3}
Let $\varphi$ be a $\mathbb{Q}$-stable normal faithful state on an injective factor $N$ of type {\rm{III}}$_1$ with separable predual. Let $u_1,\dots,u_n\in \mathcal{U}(N)$, let $\delta>0$. Then there exist a finite dimensional $\sigma^{\varphi}$-invariant subfactor $F$ of $N$ and unitaries $v_1,\dots,v_n\in \mathcal{U}(F)$, such that for every $\sigma$-strong neighborhood $\mathcal{V}$ of $0$ in $N$, there exists a finite set $b_1,\dots,b_r$ of operators in $N$ with the following properties: 
\begin{list}{}{}
\item[{\rm{(a)}}] $\displaystyle \sum_{i=1}^rb_i^*b_i\in 1+\mathcal{V}$\ \ \ and\ \ \ $\displaystyle \sum_{i=1}^rb_i^*b_i\le 1$.
\item[{\rm{(b)}}] $\displaystyle \varepsilon_{F,\varphi}\left (\sum_{i=1}^rb_ib_i^*\right )\in 1+\mathcal{V}$\ \ \  and\ \ \ $\displaystyle \varepsilon_{F,\varphi}\left (\sum_{i=1}^rb_ib_i^*\right )\le 1$. 
\item[{\rm{(c)}}] $\displaystyle \sum_{i=1}^r\|b_i\xi_{\varphi}-\xi_{\varphi}b_i\|^2<\delta.$
\item[{\rm{(d)}}] $\displaystyle \sum_{i=1}^r\|b_iu_k-v_kb_i\|_{\varphi}^2<\delta,\ \ \ k=1,\dots,n.$
\end{list}
\end{lemma}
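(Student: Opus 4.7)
The plan is to realize Step 2 of the introduction via Theorems \ref{thm: UH3.1} and \ref{thm: UH4.5}, then apply Lemma \ref{lem: UH5.1} and discretize using the $\mathbb{Q}$-stable structure to produce $b_1,\dots,b_r$ satisfying (a)--(d). Fix small auxiliary parameters $\gamma,\varepsilon'>0$ to be specified later in terms of $\delta$. Theorem \ref{thm: UH3.1} yields $m\in\mathbb{N}$, a normal UCP map $T_0\colon M_m(\mathbb{C})\to N$ with $\psi:=\varphi\circ T_0$ faithful, $\|\sigma_t^\varphi\circ T_0-T_0\circ\sigma_t^\psi\|\le\gamma|t|$, and unitaries $v_k^0\in M_m(\mathbb{C})$ with $\|T_0(v_k^0)-u_k\|_\varphi<\varepsilon'$. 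I would perturb $\psi$ slightly to a state $\psi'$ with rational eigenvalues $q_1,\dots,q_m$. Since $\varphi$ is $\mathbb{Q}$-stable, Theorem \ref{thm: UH4.5} provides a type $\mathrm{I}_m$ subfactor $F\subset N$ with $\varphi=\varphi|_F\otimes\varphi|_{F^{\text{c}}}$, $\varphi|_{F^{\text{c}}}$ still $\mathbb{Q}$-stable, and $d\varphi|_F/d\mathrm{Tr}_F$ having spectrum $(q_j)$. A $*$-isomorphism $(M_m(\mathbb{C}),\psi')\cong(F,\varphi|_F)$ then transports $(T_0,v_k^0)$ to $(T\colon F\to N,\,v_k\in\mathcal{U}(F))$ preserving the three displayed properties (with $\gamma,\varepsilon'$ possibly slightly enlarged, still small).

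Applying Lemma \ref{lem: UH5.1} to $(N,\varphi,F,T)$ produces a norm-continuous $a\colon\mathbb{R}\to N$ verifying (a)--(d) of that lemma. I fix a fine partition $\{I_j\}$ of a sufficiently large bounded interval with midpoints $t_j$; by norm continuity of $a(\cdot)$, the Riemann sums built from $\sqrt{|I_j|}\,a(t_j)$ approximate all integrals of Lemma \ref{lem: UH5.1} to any prescribed accuracy, with the neglected tails controlled by the Gaussian decay of the kernel. For each $j$, approximate $\lambda_j:=e^{-t_j}$ by a rational in $(0,1]\cap\mathbb{Q}$ when $t_j\ge 0$ (or handle the case $t_j<0$ by passing to adjoints of isometries, which furnishes $\lambda_j\ge 1$), and apply Lemma \ref{lem: UH4.6} inside $F^{\text{c}}$ with equal weights to obtain $s_j$ operators $c_{j,1},\dots,c_{j,s_j}\in F^{\text{c}}$ satisfying $\sum_\alpha c_{j,\alpha}^*c_{j,\alpha}=1$ and $\varphi c_{j,\alpha}=\lambda_j c_{j,\alpha}\varphi$. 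Define
\[
b_{j,\alpha}:=\sqrt{|I_j|}\,c_{j,\alpha}\,a(t_j),
\]
and collect them into the finite list $b_1,\dots,b_r$.

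The four conditions of Lemma \ref{lem: UH5.3} then follow from (a)--(d) of Lemma \ref{lem: UH5.1} together with $\sum_\alpha c_{j,\alpha}^*c_{j,\alpha}=1$ and Lemma \ref{lem: UH5.2}. Condition (a) comes from $\sum_{j,\alpha}b_{j,\alpha}^*b_{j,\alpha}=\sum_j|I_j|\,a(t_j)^*a(t_j)\approx\int a(t)^*a(t)\,dt=1$. Condition (b) uses Lemma \ref{lem: UH5.2} to get $\varepsilon_{F,\varphi}\bigl(\sum_\alpha c_{j,\alpha}xc_{j,\alpha}^*\bigr)=\lambda_j\,\varepsilon_{F,\varphi}(x)$, whence $\varepsilon_{F,\varphi}\bigl(\sum_{j,\alpha}b_{j,\alpha}b_{j,\alpha}^*\bigr)\approx\int e^{-t}\varepsilon_{F,\varphi}(a(t)a(t)^*)\,dt=1$. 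For (c), the modular eigenoperator identity $c_{j,\alpha}\xi_\varphi=\lambda_j^{-1/2}\xi_\varphi c_{j,\alpha}$ combined with $\lambda_j\approx e^{-t_j}$ gives $b_{j,\alpha}\xi_\varphi-\xi_\varphi b_{j,\alpha}\approx\sqrt{|I_j|}\,c_{j,\alpha}\bigl(a(t_j)\xi_\varphi-e^{-t_j/2}\xi_\varphi a(t_j)\bigr)$, and summing over $\alpha,j$ using $\sum_\alpha c_{j,\alpha}^*c_{j,\alpha}=1$ bounds $\sum_{j,\alpha}\|b_{j,\alpha}\xi_\varphi-\xi_\varphi b_{j,\alpha}\|^2$ by $\int\|a(t)\xi_\varphi-e^{-t/2}\xi_\varphi a(t)\|^2\,dt<\delta/8$. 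For (d), $c_{j,\alpha}\in F^{\text{c}}$ commutes with $v_k\in F$, so $b_{j,\alpha}u_k-v_kb_{j,\alpha}=\sqrt{|I_j|}\,c_{j,\alpha}(a(t_j)u_k-v_ka(t_j))$, and a Cauchy--Schwarz expansion together with Lemma \ref{lem: UH5.1}(d) and $\|T(v_k)-u_k\|_\varphi<\varepsilon'$ shows this sum is $O(\varepsilon'+\delta^{1/2})$. The main obstacle is arranging that the modular weight $e^{-t}$ in Lemma \ref{lem: UH5.1}(b) and the factor $e^{-t/2}$ in (c) cohere with the twists provided by the $\mathbb{Q}$-stable eigenoperators, which constrains the admissible twists to rationals in $(0,1]$ and forces the two-case treatment $t_j\ge 0$ versus $t_j<0$; the inequalities $\sum b_i^*b_i\le 1$ and $\varepsilon_{F,\varphi}(\sum b_ib_i^*)\le 1$ are enforced by scaling each $b_{j,\alpha}$ down by a single factor slightly less than $1$, with the loss absorbed into the choice of $\mathcal{V}$.
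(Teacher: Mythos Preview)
Your approach is essentially the one taken in the paper and is correct in outline: build $F$ and $T$ via Theorems~\ref{thm: UH3.1} and~\ref{thm: UH4.5}, apply Lemma~\ref{lem: UH5.1}, discretize, and then multiply the discretized $a(t_j)$ on the left by eigenoperators $c_{j,\alpha}\in F^{\mathrm c}$ coming from the $\mathbb{Q}$-stability of $\varphi|_{F^{\mathrm c}}$, so that Lemma~\ref{lem: UH5.2} converts condition (b) of Lemma~\ref{lem: UH5.1} into (b), and the eigenrelation $\xi_\varphi c_{j,\alpha}=\lambda_j^{1/2}c_{j,\alpha}\xi_\varphi$ converts (c) of Lemma~\ref{lem: UH5.1} into (c). The computation for (d) via the expansion $\int\|a(t)u_k-v_ka(t)\|_\varphi^2\,dt = 2-2\,\mathrm{Re}\,\langle u_k\xi_\varphi,\int a(t)^*v_ka(t)\,dt\,\xi_\varphi\rangle$ together with Lemma~\ref{lem: UH5.1}(d) and $\|T(v_k)-u_k\|_\varphi<\varepsilon'$ is exactly what the paper does.

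The one point where you diverge from the paper, and where your write-up becomes awkward, is the rationality of the modular eigenvalues $\lambda_j$. You approximate each $e^{-t_j}$ by a nearby rational, which produces an extra error term in (c) that you do not estimate, and you split into the cases $t_j\ge 0$ and $t_j<0$. Moreover, ``apply Lemma~\ref{lem: UH4.6} with equal weights'' does not give what you want: equal weights force $\lambda_j=1/s_j$ for an integer $s_j$, which cannot approximate an arbitrary value in $(0,1)$. The paper avoids all of this with one clean device: it discretizes with a \emph{uniform} mesh $t_j=jh$ and chooses the step size $h\in(0,h_0)$ so that $e^{h}\in\mathbb{Q}$. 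Then every $e^{-jh}$ is automatically a positive rational (for $j$ of either sign), no approximation is needed, and for each such rational $\lambda$ one constructs directly from the $\mathbb{Q}$-stability of $\varphi|_{F^{\mathrm c}}$ finitely many $c_1,\dots,c_s\in F^{\mathrm c}$ with $\varphi c_i=\lambda c_i\varphi$ and $\sum_i c_i^*c_i=1$ (this follows from the construction in the proof of Lemma~\ref{lem: UH4.6}, not its literal statement). One then has exactly $\xi_\varphi c_{ji}=e^{-jh/2}c_{ji}\xi_\varphi$, so (c) reduces to the discretized Lemma~\ref{lem: UH5.1}(c) with no residual error and no case split. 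Your two-case treatment with adjoints can be made to work, but this trick is simpler.
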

\begin{proof}
Put $\delta_1=\text{min}(\delta^2/4,\delta)$. 
By Theorem \ref{thm: UH3.1}, there exist $m\in \mathbb{N}$, a unital completely positive map $T_0\colon M_m(\mathbb{C})\to N$ and unitaries $w_1,\dots,w_n\in M_m(\mathbb{C})$ such that $\psi:=\varphi\circ T\in M_m(\mathbb{C})_*$ satisfies 
\begin{align*}
\|\sigma_t^{\varphi}\circ T_0-T_0\circ \sigma_t^{\psi}\|&\le \frac{\delta_1}{2}|t|,\ \ \ t\in \mathbb{R},\\
\|T_0(w_k)-u_k\|_{\varphi}&<\varepsilon,\ \ \ k=1,\dots,n.
\end{align*}
Let $\{q_1',\dots,q_m'\}$ be the spectrum of $\displaystyle \text{d}\psi/\text{dTr}\in M_m(\mathbb{C})_+$ where the multiplicity is taken into account. 
Let $\{q_1,\dots, q_m\}$ be positive rationals with sum 1, and let  $\chi$ on $M_m(\mathbb{C})_+$ such that $\text{d}\chi/\text{dTr}$ has the same spectral projections as $\text{d}\psi/\text{dTr}$ but the eigenvalues replaced by $\{q_1,\dots, q_m\}$.  Since $\|e^{ia}-e^{ib}\|\le \|a-b\|$ for self-adjoint operators $a,b$, (cf. (\ref{eq: hit-h_0it}) in Theorem \ref{thm: UH3.1}), we may arrange $q_i$'s so that the following inequality holds:  
\begin{equation*}
\|\sigma_t^{\psi}-\sigma_t^{\chi}\|_{M_m(\mathbb{C})}\le \frac{\delta_1}{2}|t|,\ \ \ \ \ t\in \mathbb{R}.
\end{equation*}
Since $\varphi$ is $\mathbb{Q}$-stable and $q_i$'s are rationals, by Theorem \ref{thm: UH4.5}, there exists a finite-dimensional subfactor $F\subset N$ and a state-preserving $*$-isomorphism $\Phi\colon (M_m(\mathbb{C}),\chi)\to (F,\varphi|_F)$ such that $\varphi|_{F^{\text{c}}}$ is $\mathbb{Q}$-stable. Define $T:=T_0\circ \Phi^{-1}\colon F\to N$ and $v_k:=\Phi_0(w_k)\in \mathcal{U}(F) (1\le k\le n)$. 
Then if $x=\Phi(y)\ (y\in M_m(\mathbb{C}))$ and $t\in \mathbb{R}$, we have 
\begin{align*}
\|\sigma_t^{\varphi}\circ T(x)-T\circ \sigma_t^{\varphi|_F}(x)\|&=\|\sigma_t^{\varphi}\circ T_0(y)-T_0\circ \Phi^{-1}\circ \sigma_t^{\varphi|_F}\circ \Phi(y)\|\notag \\
&=\|\sigma_t^{\varphi}\circ T_0(y)-T_0\circ \sigma_t^{\chi}(y)\|\notag \\
&\le \|\sigma_t^{\varphi}\circ T_0(y)-T_0\circ \sigma_t^{\psi}(y)\|+\|T(\sigma_t^{\psi}(y)-\sigma_t^{\chi}(y))\|\notag \\
&\le \delta_1|t|\|y\|.
\end{align*}
Therefore we obtain
\begin{align*}
T(1)=1,\ \ \ \ &\varphi\circ T=\varphi|_F,\\
\|\sigma_t^{\varphi}\circ T-T\circ \sigma_t^{\varphi|_F}\|&\le \delta_1|t|,\ \  \ \ \ \ \ \ \ t\in \mathbb{R},\\
\|T(v_k)-u_k\|_{\varphi}&<\delta_1^{\frac{1}{2}},\ \ \ \ \ \ \  \ k=1,\dots,n.
\end{align*} 
Choose now a norm-continuous function $t\mapsto a(t)$ of $\mathbb{R}$ into $N$, such that the conditions (a),\,(b),\,(c) and (d) in Lemma \ref{lem: UH5.1} are satisfied with respect to $\delta_1$ instead of $\delta$. Then using (d), we have 
\begin{equation*}
\|u_k-\int_{-\infty}^{\infty}a(t)^*v_ka(t)\,dt\|_{\varphi}<2\delta_1^{\frac{1}{2}}\le \delta
\end{equation*}
for $k=1,\dots,n$. Using that $\displaystyle \int_{-\infty}^{\infty}a(t)^*a(t)\,dt=1$, it follows that 
\eqa{
\int_{-\infty}^{\infty}\|a(t)u_k-v_ka(t)\|_{\varphi}^2\,dt&=2-2\text{Re}\int_{-\infty}^{\infty}\nai{a(t)u_k\xi_{\varphi}}{v_ka(t)\xi_{\varphi}}\,dt\\
&=2-2\text{Re}\left (\nai{u_k\xi_{\varphi}}{\int_{-\infty}^{\infty}a(t)^*v_ka(t)\xi_{\varphi}\,dt}\right )\\
&\le \|u_k-\int_{-\infty}^{\infty}a(t)^*v_ka(t)\,dt\|_{\varphi}\\
&<\delta.
}
Let now $\mathcal{V}$ be a $\sigma$-strong neighborhood of 0 in $N$. It is no loss of generality to assume that $\mathcal{V}$ is open. For sufficiently large $\gamma\in \mathbb{R}_+$, we have:
\begin{itemize}
\item[{\rm{(a')}}] $\displaystyle \int_{-\gamma}^{\gamma}a(t)^*a(t)dt\in 1+\mathcal{V}$\ \ \  and\ \ \  $\displaystyle \int_{-\gamma}^{\gamma}a(t)^*a(t)\,dt\le 1$.
\item[{\rm{(b')}}] $\displaystyle \int_{-\gamma}^{\gamma}e^{-t}\varepsilon_{F,\varphi}(a(t)a(t)^*)dt\in 1+\mathcal{V}$\ \ \  and\ \ \  $\displaystyle \int_{-\gamma}^{\gamma}e^{-t}\varepsilon_{F,\varphi}(a(t)a(t)^*)\,dt\le 1$. 
\item[{\rm{(c')}}] $\displaystyle \int_{-\gamma}^{\gamma}\|a(t)\xi_{\varphi}-e^{-t/2}\xi_{\varphi}a(t)\|^2\,dt<\frac{\delta_1}{8}\le \delta$. 
\item[{\rm{(d')}}] $\displaystyle \int_{-\gamma}^{\gamma}\|a(t)u_k-v_ka(t)\|_{\varphi}^2\,dt<\delta.$
\end{itemize}
Since $t\mapsto a(t)$ is norm-continuous, we can approximate (in norm) the above $N$-valued Riemann integrals over $[-\gamma,\gamma]$ to get the following statements: there exists an $h_0>0$ such that when $0<h<h_0$, the operators 
$$a_j=h^{-\frac{1}{2}}a(jh),\ \ \ j\in \mathbb{Z}$$
satisfy the following relations:
\begin{itemize}
\item[{\rm{(a'')}}] $\displaystyle \sum_{j=-p}^pa_j^*a_j\in 1+\mathcal{V}.$
\item[{\rm{(b'')}}] $\displaystyle \sum_{j=-p}^pe^{-jh}\varepsilon_{F,\varphi}(a_ja_j^*)\in 1+\mathcal{V}$.
\item[{\rm{(c'')}}] $\displaystyle  \sum_{j=-p}^p\|a_j\xi_{\varphi}-e^{-\frac{1}{2}jh}\xi_{\varphi}a_j\|^2<\delta$. 
\item[{\rm{(d'')}}] $\displaystyle \sum_{j=-p}^p\|a_ju_k-v_ka_j\|_{\varphi}^2dt<\delta$,
\end{itemize}
where  $p$ is the largest integer smaller than $\gamma/h_0$. 
Moreover, since the Riemann sum is norm-convergent, by multiplying a scalar $c>0$ to $a_j$'s which is sufficiently close to 1 if necessary, we may moreover assume that 
\begin{align}
\sum_{j=-p}^pa_j^*a_j&\le 1\label{eq: suma_j^*a_j}\\
\sum_{j=-p}^pe^{-jh}\varepsilon_{F,\varphi}&(a_ja_j^*)\le 1.\label{eq: suma_ja_j^*}
\end{align}
Choose now $h\in (0,h_0)$, such that $\exp(h)\in \mathbb{Q}$. This implies that the numbers $q_j=e^{-jh},\ j\in \mathbb{Z}$ are rational. Since the restriction of $\varphi$ to $F^{\text{c}}$ is $\mathbb{Q}$-stable, there exists for each $j\in \mathbb{Z}$ a finite set of operators $c_{j1},\dots,c_{js(j)}$ in $F^{\text{c}}$ such that 
\begin{equation*}
\varphi c_{ji}=e^{-jh}c_{ji}\varphi,\ \ \ \ \ \ i=1,\dots,s(j)
\end{equation*}  
and
\begin{equation*}
\sum_{i=1}^{s(j)}c_{ji}^*c_{ji}=1.
\end{equation*}
(Here we use Lemma \ref{lem: UH4.6} together with the fact that $\varphi=\varphi|_{F}\otimes \varphi|_{F^{\text{c}}}$, when $F$ is $\sigma^{\varphi}$-invariant). Put 
\begin{equation*}
b_{ji}=c_{ji}a_j,\ \ \ \ |j|\le p,\ \ 1\le i\le s(j).
\end{equation*}
Then by (\ref{eq: suma_j^*a_j}), 
\begin{list}{}{}
\item[{\rm{(a'")}}] \hspace{1.5cm}$\displaystyle \sum_{j=-p}^p\sum_{i=1}^{s(j)}b_{ji}^*b_{ji}=\sum_{j=-p}^pa_j^*a_j\in 1+\mathcal{V}$\ \  \ and\ \ \ $\displaystyle \sum_{j=-p}^p\sum_{i=1}^{s(j)}b_{ji}^*b_{ji}\le 1$,
\end{list}
and by (\ref{eq: suma_ja_j^*}) and Lemma \ref{lem: UH5.2}, 
\begin{list}{}{}
\item[{\rm{(b'")}}] \ \ \ \ \ $\displaystyle \varepsilon_{F,\varphi} \left (\sum_{j=-p}^p\sum_{i=1}^{s(j)}b_{ji}b_{ji}^*\right )=\sum_{j=-p}^pe^{-jh}\varepsilon_{F,\varphi}(a_ja_j^*)\in 1+\mathcal{V}$,\\ 
\hspace{4.5cm}and\ \ \ $\displaystyle \varepsilon_{F,\varphi} \left (\sum_{j=-p}^p\sum_{i=1}^{s(j)}b_{ji}b_{ji}^*\right )\le 1.$
\end{list}
The equality $\varphi c_{ji}=e^{-jh}c_{ji}\varphi$ implies that 
\begin{equation*}
\xi_{\varphi}c_{ji}=e^{-\frac{1}{2}jh}c_{ji}\xi_{\varphi}. 
\end{equation*}
Therefore 
\eqa{
\text{(c'")}\ \ \sum_{j=-p}^p\sum_{i=1}^{s(j)}\|b_{ji}\xi_{\varphi}-\xi_{\varphi}b_{ji}\|^2&=\sum_{j=-p}^p\sum_{i=1}^{s(j)}\|c_{ji}(a_j\xi_{\varphi}-e^{-\frac{1}{2}jh}\xi_{\varphi}a_j)\|^2\\
&=\sum_{j=-p}^p\|a_j\xi_{\varphi}-e^{-\frac{1}{2}jh}\xi_{\varphi}a_j\|^2\\
&<\varepsilon.
}
Finally, using that $v_k\in F$ and $a_{ji}\in F^{\text{c}}$, we get 
\eqa{
\text{(d'")}\ \ \ \  \  \sum_{j=-p}^p\sum_{i=1}^{s(j)}\|b_{ji}u_k-v_kb_{ji}\|_{\varphi}^2&=\sum_{j=-p}^p\sum_{i=1}^{s(j)}\|c_{ji}(a_ju_k-v_ka_j)\|_{\varphi}^2\\
&=\sum_{j=-p}^p \|a_ju_k-v_ka_j\|_{\varphi}^2\\
&<\delta.
} 
This completes the proof of Lemma \ref{lem: UH5.3}.
\end{proof}

In the proof of the following lemma, it is essential that injective type III$_1$ factors (on a separable Hilbert space) have trivial bicentralizers.  
\begin{lemma}\label{lem: UH5.4} Let $\varphi$ be a $\mathbb{Q}$-stable normal faithful state on an injective factor $N$ of type {\rm{III}}$_1$ with separable predual. Let $u_1,\dots,u_n\in \mathcal{U}(N)$, and let $\delta>0$. Then there exists a finite dimensional $\sigma^{\varphi}$-invariant subfactor $F$ of $N$ and $v_1,\dots,v_n\in \mathcal{U}(F)$, such that the for every $\sigma$-strong neighborhood $\mathcal{V}$ of 0 in $N$, there exists a finite set $a_1,\dots,a_p$ of operators in $N$ with the following properties: 
\begin{list}{}{}
\item[{\rm{(a)}}] $\displaystyle \sum_{i=1}^pa_i^*a_i\in 1+\mathcal{V}$\ \ \ and\ \ \ $\displaystyle \sum_{i=1}^pa_i^*a_i\le 1$.
\item[{\rm{(b)}}] $\displaystyle \sum_{i=1}^pa_ia_i^*\in 1+\mathcal{V}$\ \ \ and\ \ \ $\displaystyle \sum_{i=1}^pa_ia_i^*\le 1$. 
\item[{\rm{(c)}}] $\displaystyle \sum_{i=1}^p\|a_i\xi_{\varphi}-\xi_{\varphi}a_i\|^2<\delta$. 
\item[{\rm{(d)}}] $\displaystyle \sum_{i=1}^p\|a_iu_k-v_ka_i\|_{\varphi}^2<\delta,\ \ \ \ k=1,\dots, n$.  
\end{list}
\end{lemma}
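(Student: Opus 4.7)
The plan is to bootstrap from Lemma~\ref{lem: UH5.3}, whose conclusion differs from the present one only in that condition (b) is stated for $\varepsilon_{F,\varphi}(\sum b_i b_i^*)$ rather than for $\sum b_i b_i^*$ itself. I would remove the conditional expectation by an averaging argument that invokes triviality of the bicentralizer on injective III$_1$ factors, which is Theorem~\ref{thm: B}, applied through Proposition~\ref{prop: bicentralizer condition}.

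First, apply Lemma~\ref{lem: UH5.3} with a small preliminary parameter $\delta_1 < \delta$ and a small preliminary $\sigma$-strong neighborhood $\mathcal{V}_0 \subset \mathcal{V}$, producing $F$, unitaries $v_1, \ldots, v_n \in \mathcal{U}(F)$, and operators $b_1, \ldots, b_r$ in the unit ball of $N$ satisfying the analogues of (a)--(d) but with $\varepsilon_{F,\varphi}$ in the statement of (b). Set $y := \sum_i b_i b_i^*$ and $\bar y := \varepsilon_{F,\varphi}(y)$. Since $N$ is an injective III$_1$ factor, Theorem~\ref{thm: B} gives $B_\varphi = \mathbb{C} 1$, so Proposition~\ref{prop: bicentralizer condition} produces, for any $\eta > 0$ and any $\sigma$-strong neighborhood $\mathcal{W}$ of $\bar y$, a convex combination $z := \sum_j \lambda_j w_j^* y w_j \in \mathcal{W}$ with $w_j \in \mathcal{U}(F^c)$ and $\|w_j \xi_\varphi - \xi_\varphi w_j\| \le \eta$.

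Define $a_{ji} := \sqrt{\lambda_j}\, w_j^* b_i$. The identity $\sum_{j,i} a_{ji}^* a_{ji} = \sum_i b_i^* b_i$ preserves (a); the identity $\sum_{j,i} a_{ji} a_{ji}^* = z$ combined with $z \in \mathcal{W}$ and $\bar y \in 1 + \mathcal{V}_0$ places $z$ inside $1 + \mathcal{V}$ provided $\mathcal{V}_0 + \mathcal{W} \subset \mathcal{V}$, yielding the ``$\in 1 + \mathcal{V}$'' half of (b). Condition (d) transfers verbatim from Lemma~\ref{lem: UH5.3}(d) because $v_k \in F$ commutes with $w_j \in F^c$ and $w_j^*$ is unitary, so $\|a_{ji} u_k - v_k a_{ji}\|_\varphi^2 = \lambda_j \|b_i u_k - v_k b_i\|_\varphi^2$. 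Condition (c) picks up an additive error bounded, using $\|b_i\| \le 1$, by a constant multiple of $r\eta^2$, which I would make $< \delta/2$ by choosing $\eta$ small once $r$ is fixed.

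The remaining and, I expect, principal obstacle is the bound $\sum a_{ji} a_{ji}^* \le 1$, since a priori only $\|z\| \le \|y\|$ is available. I would handle this by spectral truncation and rescaling: set $e := 1_{[0, 1+\epsilon]}(z) \in N$ for small $\epsilon > 0$ and $\tilde a_{ji} := (1+\epsilon)^{-1/2} e a_{ji}$, so that $\sum \tilde a_{ji} \tilde a_{ji}^* = (1+\epsilon)^{-1} e z e \le 1$. The operator inequality $1 - e \le \epsilon^{-1}(z - 1)_+$ combined with the decomposition $z - 1 = (z - \bar y) + (\bar y - 1)$ shows that $\|(1-e)\xi\|$ can be made arbitrarily small on any prescribed finite family of vectors $\xi$ by a suitable sequential choice of $\mathcal{V}_0$, $\mathcal{W}$, and $\epsilon$. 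Since $e \in N$ need not lie in $F^c$, the replacement $a_{ji} \mapsto \tilde a_{ji}$ gives rise to extra terms $\sum \|(e\xi_\varphi - \xi_\varphi e) a_{ji}\|^2 \le 4r \|(1-e)\xi_\varphi\|^2$ in (c) and $\sum \|[e, v_k] a_{ji}\|_\varphi^2 \le 2 \sum \|(1-e) v_k a_{ji} \xi_\varphi\|^2 + 2 \sum \|(1-e) a_{ji} \xi_\varphi\|^2$ in (d); both are controlled by the $\sigma$-strong smallness of $1 - e$ on the relevant finite family of vectors. The delicate point is therefore the bookkeeping of parameters $\delta_1, \mathcal{V}_0, \eta, \mathcal{W}, \epsilon$ in the correct order so that the compounded estimates remain within the prescribed tolerances $\delta$ and $\mathcal{V}$.
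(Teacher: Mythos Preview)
Your approach is the paper's: obtain $F$, $v_k$, and $b_1,\dots,b_r$ from Lemma~\ref{lem: UH5.3}, invoke $B_\varphi=\mathbb{C}1$ (Theorem~\ref{thm: B}) through Proposition~\ref{prop: bicentralizer condition} to approximate $\varepsilon_{F,\varphi}\bigl(\sum_i b_ib_i^*\bigr)$ by a convex combination $\sum_j\lambda_j w_j\bigl(\sum_ib_ib_i^*\bigr)w_j^*$ with $w_j\in\mathcal{U}(F^{\text c})$ almost commuting with $\xi_\varphi$, and set $a_{ji}=\lambda_j^{1/2}w_jb_i$ (you write $w_j^*$, which is immaterial). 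The verifications of (a), (c), (d) that you sketch coincide with the paper's; in particular the paper bounds the $\ell^2$-perturbation in (c) exactly by the cross term $\delta'\bigl(\sum_i\|b_i\|^2\bigr)^{1/2}$ and then takes $\delta'$ small after $r$ is fixed, just as you propose with $\eta$.

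Where you go further than the paper is on the inequality $\sum_{j,i}a_{ji}a_{ji}^*\le 1$ in (b). The paper simply asserts this alongside $\sum_j\lambda_jw_jhw_j^*\in 1+\mathcal{V}$ and gives no argument, so you have correctly isolated a step that the written proof does not justify: $\sigma$-strong closeness of $z$ to $\bar y\le 1$ does not by itself force $z\le 1$. Your spectral-truncation repair with $e=1_{[0,1+\epsilon]}(z)$ and $\tilde a_{ji}=(1+\epsilon)^{-1/2}e\,a_{ji}$ is a sound way to close this: both sums then sit below $1$, and the Chebyshev-type bound $1-e\le\epsilon^{-1}(z-1)_+$ together with $\bar y\le 1$ (hence $z-1\le z-\bar y$) lets one make $1-e$ as $\sigma$-strongly small as desired once $z-\bar y$ is. The bookkeeping you anticipate is genuine but routine; in short, your proposal is the paper's argument plus a technical patch the paper omits.
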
 
\begin{proof}
Choose an $F$ and $v_1,\dots,v_n\in \mathcal{U}(F)$ satisfying the properties of Lemma \ref{lem: UH5.3} with respect to $(u_1,\dots,u_n,\delta)$, and let $\mathcal{V}$ be a $\sigma$-strongly open neighborhood of $0$ in $N$. By Lemma \ref{lem: UH5.3}, there exists $b_1,\dots,b_r\in N$ such that 
\begin{list}{}{}
\item[{\rm{(a')}}] $\displaystyle \sum_{i=1}^rb_i^*b_i\in 1+\mathcal{V}$\ \ \ and\ \ \ $\displaystyle \sum_{i=1}^rb_i^*b_i\le 1$.
\item[{\rm{(b')}}] $\displaystyle \varepsilon_{F,\varphi}\left (\sum_{i=1}^rb_ib_i^*\right )\in 1+\mathcal{V}$\ \ \  and\ \ \  $\displaystyle \varepsilon_{F,\varphi}\left (\sum_{i=1}^rb_ib_i^*\right )\le 1$. 
\item[{\rm{(c')}}] $\displaystyle \sum_{i=1}^r\|b_i\xi_{\varphi}-\xi_{\varphi}b_i\|^2<\delta.$
\item[{\rm{(d')}}] $\displaystyle \sum_{i=1}^r\|b_iu_k-v_kb_i\|^2<\delta,\ \ \ k=1,\dots,n.$
\end{list}
Let $\delta'>0$ and $h$ denote the operator $\sum_{i=1}^rb_ib_i^*$. Since $B_{\varphi}=\mathbb{C}1$, by Proposition \ref{prop: bicentralizer condition}, we have 
\begin{equation}
\varepsilon_{F,\varphi}(h)\in \overline{\text{conv}}\{whw^*;\ w\in \mathcal{U}(F^{\text{c}}),\ \|w\xi_{\varphi}-\xi_{\varphi}w\|<\delta'\}.\label{eq: relative Scwarz}
\end{equation}
Here, the bar in (\ref{eq: relative Scwarz}) denotes the $\sigma$-strong closure. Hence there exist $w_1,\dots,w_s\in \mathcal{U}(F^{\text{c}})$, and scalars $\lambda_1,\dots,\lambda_s\in \mathbb{R}_+$, with sum $1$, such that 
\begin{equation*}
\|w_j\xi_{\varphi}-\xi_{\varphi}w_j\|<\delta',\ \ \ \ \ j=1,\dots, s
\end{equation*}
and 
\begin{equation*}
\sum_{j=1}^s\lambda_jw_jhw_j^*\in 1+\mathcal{V}.
\end{equation*}
Put 
\begin{equation*}
a_{ij}:=\lambda_j^{\frac{1}{2}}w_jb_i,\ \ \ \ \ \ \ i=1,\dots, r,\ j=1,\dots, s. 
\end{equation*}
Then 
\begin{list}{}{}
\item[{\rm{(a")}}] $\displaystyle \sum_{i=1}^r\sum_{j=1}^sa_{ij}^*a_{ij}=\sum_{i=1}^rb_i^*b_i\in 1+\mathcal{V}$\ \ \ and\ \ \ $\displaystyle \sum_{i=1}^r\sum_{j=1}^sa_{ij}^*a_{ij}\le 1$. 
\item[{\rm{(b'')}}] $\displaystyle \sum_{i=1}^r\sum_{j=1}^sa_{ij}a_{ij}^*=\sum_{j=1}^s\lambda_jw_jhw_j^*\in 1+\mathcal{V}$\ \ \ and\ \ \ $\displaystyle \sum_{i=1}^r\sum_{j=1}^sa_{ij}a_{ij}^*\le 1$.  
\end{list}
Moreover, using 
\begin{equation*}
a_{ij}\xi_{\varphi}-\xi_{\varphi}a_{ij}=\lambda_j^{\frac{1}{2}}w_j(b_i\xi_{\varphi}-\xi_{\varphi}b_i)+\lambda_j^{\frac{1}{2}}(w_j\xi_{\varphi}-\xi_{\varphi}w_j)b_i,
\end{equation*}
we obtain 
\eqa{
\text{(c")}\ \ \left (\sum_{i=1}^r\sum_{j=1}^s\|a_{ij}\xi_{\varphi}-\xi_{\varphi}a_{ij}\|^2\right )^{\frac{1}{2}}&\le \left (\sum_{i=1}^r\sum_{j=1}^s\lambda_j\|b_i\xi_{\varphi}-\xi_{\varphi}b_i\|^2\right )^{\frac{1}{2}}+\delta' \left (\sum_{i=1}^r\sum_{j=1}^s\lambda_j\|b_i\|^2\right )^{\frac{1}{2}}\\
&=\left (\sum_{i=1}^r\|b_i\xi_{\varphi}-\xi_{\varphi}b_i\|^2\right )^{\frac{1}{2}}+\delta'\left (\sum_{i=1}^r\|b_i\|^2\right )^{\frac{1}{2}}.
}
Finaly, since $v_k\in F$ and $w_j\in F^{\text{c}}$, we have 
\eqa{
\text{(d")}\ \ \ \ \sum_{i=1}^r\sum_{j=1}^s\|a_{ij}u_k-v_ka_{ij}\|_{\varphi}^2&=\sum_{i=1}^r\sum_{j=1}^s\lambda_j\|w_j(b_iu_k-v_kb_i)\|_{\varphi}^2\\
&=\sum_{i=1}^r\|b_iu_k-v_kb_i\|_{\varphi}^2\\
&<\delta.
}
Since $\delta'>0$ was arbitrary (independent of $\delta,\mathcal{V}$, and $b_1,\dots, b_r$), we can assume that 
\begin{equation*}
\left (\sum_{i=1}^r\|b_i\xi_{\varphi}-\xi_{\varphi}b_i\|^2\right )^{\frac{1}{2}}+\delta'\left (\sum_{i=1}^r\|b_i\|^2\right )^{\frac{1}{2}}<\delta^{\frac{1}{2}}.
\end{equation*}
This proves Lemma \ref{lem: UH5.4}. 
\end{proof}
\begin{lemma}\label{lem: UH5.5} 
Let $N$ be an injective factor of type {\rm{III}}$_1$ with separable predual, and let $\varphi$ be a $\mathbb{Q}$-stable normal faithful state on $N$. Let $u_1,\dots,u_n\in \mathcal{U}(N)$ and let $\varepsilon>0$. Then there exist a $\sigma^{\varphi}$-invariant finite dimensional subfactor $F$ of $N$, $v_1,\dots,v_n\in \mathcal{U}(F)$ and a unitary $w\in \mathcal{U}(N)$ such that 
\begin{equation*}
\|w\xi_{\varphi}-\xi_{\varphi}w\|<\varepsilon,
\end{equation*}
and 
\begin{equation*}
\|w^*v_kw-u_k\|_{\varphi}<\varepsilon,\ \ \ \ \ k=1,\dots, n.
\end{equation*}
\end{lemma}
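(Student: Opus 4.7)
The plan is to deduce Lemma \ref{lem: UH5.5} by feeding the output of Lemma \ref{lem: UH5.4} into Theorem \ref{thm: almost unitary equivalence} (in the form of Remark \ref{rem: approximate related}), applied to the standard Hilbert $N$-bimodule $H$. Given $\varepsilon>0$, I would first let $\delta_0:=\delta(n+1,\varepsilon/2)$ denote the constant furnished by Theorem \ref{thm: almost unitary equivalence}, pick $\delta:=\delta_0/4$, and apply Lemma \ref{lem: UH5.4} to $(u_1,\dots,u_n;\delta)$ to produce the $\sigma^{\varphi}$-invariant finite-dimensional subfactor $F$, the unitaries $v_1,\dots,v_n\in \mathcal{U}(F)$, and (for every $\sigma$-strong neighbourhood $\mathcal{V}$ of $0$ in $N$) a finite set $a_1,\dots,a_p\in N$ satisfying (a)--(d) of that lemma.

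Next I would introduce the two $(n+1)$-tuples of unit vectors in $H$ defined by $\xi_0=\eta_0:=\xi_{\varphi}$ and, for $1\le k\le n$, $\xi_k:=u_k\xi_{\varphi}$, $\eta_k:=v_k\xi_{\varphi}$. Conditions (a) and (b) of Lemma \ref{lem: UH5.4} directly yield (\ref{eq: approximate related2}) and (\ref{eq: approximate related3}) of Remark \ref{rem: approximate related}, so it remains to verify (\ref{eq: approximate related}). For $k=0$ this is exactly (c) of Lemma \ref{lem: UH5.4}; for $k\ge 1$, the key observation is that the right action $R_{a_i}=Ja_i^*J$ lies in $N'$ and therefore commutes with the left action of $v_k$, giving $\eta_k a_i=v_k(\xi_{\varphi}a_i)$. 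A triangle inequality in $\ell^2(\{1,\dots,p\},H)$ then yields
\[
\Bigl(\sum_i \|a_i\xi_k-\eta_k a_i\|^2\Bigr)^{1/2}\le \Bigl(\sum_i \|a_iu_k-v_ka_i\|_{\varphi}^2\Bigr)^{1/2}+\Bigl(\sum_i \|a_i\xi_{\varphi}-\xi_{\varphi}a_i\|^2\Bigr)^{1/2}<2\delta^{1/2},
\]
so squaring gives $4\delta=\delta_0$, as required.

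Theorem \ref{thm: almost unitary equivalence} then produces a unitary $w\in \mathcal{U}(N)$ with $\|w\xi_k-\eta_k w\|<\varepsilon/2$ for every $k=0,1,\dots,n$. The case $k=0$ reads $\|w\xi_{\varphi}-\xi_{\varphi}w\|<\varepsilon/2$, which is the first conclusion. For $k\ge 1$, using that $\|w^*y\|_{\varphi}=\|y\|_{\varphi}$ for any $y\in N$ together with one more triangle inequality,
\[
\|w^*v_kw-u_k\|_{\varphi}=\|v_kw\xi_{\varphi}-wu_k\xi_{\varphi}\|\le \|v_k(w\xi_{\varphi}-\xi_{\varphi}w)\|+\|\eta_k w-w\xi_k\|<\tfrac{\varepsilon}{2}+\tfrac{\varepsilon}{2}=\varepsilon,
\]
which gives the second conclusion. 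I expect no serious obstacle: the only subtle point is the translation between the $\|\cdot\|_{\varphi}$-norms appearing in Lemma \ref{lem: UH5.4} and the bimodule norms appearing in Theorem \ref{thm: almost unitary equivalence}, and this rests on the trivial identity $R_{a_i}\circ L_{v_k}=L_{v_k}\circ R_{a_i}$. Once the vectors $\xi_k$, $\eta_k$ are chosen, the rest is bookkeeping with triangle inequalities.
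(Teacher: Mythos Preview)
Your proposal is correct and follows essentially the same approach as the paper: choose $\delta$ in terms of the constant $\delta(n+1,\varepsilon/2)$ from Theorem \ref{thm: almost unitary equivalence}, apply Lemma \ref{lem: UH5.4}, form the $(n+1)$-tuples $(\xi_{\varphi},u_1\xi_{\varphi},\dots,u_n\xi_{\varphi})$ and $(\xi_{\varphi},v_1\xi_{\varphi},\dots,v_n\xi_{\varphi})$, verify the hypotheses of Remark \ref{rem: approximate related} via the identity $a_i\xi_k-\eta_ka_i=(a_iu_k-v_ka_i)\xi_{\varphi}+v_k(a_i\xi_{\varphi}-\xi_{\varphi}a_i)$, and conclude using the resulting unitary $w$. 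The only cosmetic difference is that the paper takes $\delta_1=\tfrac{1}{16}\delta(n+1,\varepsilon/2)$ rather than your $\delta=\tfrac{1}{4}\delta(n+1,\varepsilon/2)$, but your tighter constant suffices since $(2\delta^{1/2})^2=4\delta=\delta_0$.
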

\begin{proof}
Let $\delta(n,\varepsilon)>0$ be the function in Theorem \ref{thm: almost unitary equivalence}, and put $\delta_1=\frac{1}{16}\delta(n+1,\varepsilon/2)$. Choose $F$ and $v_1,\dots,v_n\in \mathcal{U}(F)$, such that the conditions of Lemma \ref{lem: UH5.4} are satisfied with respect to $(u_1,\dots,u_n,\delta_1)$. Put 
\begin{equation*}
\xi_k=u_k\xi_{\varphi},\ \ \eta_k=v_k\xi_{\varphi},\ \ \ \ k=1,\dots, n.
\end{equation*}
For every $\sigma$-strong neighborhood $\mathcal{V}$ of $0$ in $N$, there exist $a_1,\dots,a_p\in N$, such that (a), (b), (c) and (d) in Lemma \ref{lem: UH5.4} are satisfied. Since 
\begin{equation*}
a_i\xi_k-\eta_ka_i=(a_iu_k-v_ka_i)\xi_{\varphi}+v_k(a_i\xi_{\varphi}-\xi_{\varphi}a_i),
\end{equation*}
we have 
\begin{align*}
\left (\sum_{i=1}^p\|a_i\xi_k-\eta_ka_i\|^2\right )^{\frac{1}{2}}&\le \left (\sum_{i=1}^p\|a_iu_k-v_ka_i\|_{\varphi}^2\right )^{\frac{1}{2}}+\left (\sum_{i=1}^p\|a_i\xi_{\varphi}-\xi_{\varphi}a_i\|^2\right )^{\frac{1}{2}}\notag \\
&<2\delta_1^{\frac{1}{2}}.
\end{align*}
Moreover, 
\begin{equation*}
\left (\sum_{i=1}^p\|a_i\xi_{\varphi}-\xi_{\varphi}a_i\|^2\right )^{\frac{1}{2}}<\delta_1^{\frac{1}{2}}<2\delta_1^{\frac{1}{2}}.
\end{equation*}
Since $\sum_{i=1}^pa_i^*a_i\in 1+\mathcal{V}$, $\sum_{i=1}^pa_i^*a_i\le 1$, $\sum_{i=1}^pa_ia_i^*\in 1+\mathcal{V}$ and $\sum_{i=1}^pa_ia_i^*\le 1$, the two $(n+1)$-tuples $(\xi_1,\dots,\xi_n,\xi_{\varphi})$ and $(\eta_1,\dots,\eta_n,\xi_{\varphi})$ satisfies the conditions of Remark \ref{rem: approximate related} with $4\delta_1^{\frac{1}{2}}$ instead of $\delta$, so that the two $(n+1)$-tuples are $16\delta_1$-related, or equivalently they are $\delta(n+1,\frac{\varepsilon}{2})$-related in the sense of Remark \ref{rem: approximate related}. Hence by Theorem \ref{thm: almost unitary equivalence}, there exists a unitary operator $w\in \mathcal{U}(N)$, such that 
\begin{equation*}
\|w\xi_k-\eta_kw\|<\frac{\varepsilon}{2},\ \ \ \ k=1,\dots, n. 
\end{equation*}
and
\begin{equation*}
\|w\xi_{\varphi}-\xi_{\varphi}w\|<\frac{\varepsilon}{2}.
\end{equation*}
Therefore 
\begin{align*}
\|w^*v_kw-u_k\|_{\varphi}&=\|w^*(v_kw-wu_k)\xi_{\varphi}\|\notag \\
&=\|(wu_k-v_kw)\xi_{\varphi}\|\notag \\
&=\|(w\xi_k-\eta_kw)+v_k(\xi_{\varphi}w-w\xi_{\varphi})\|\notag \\
&<\varepsilon,
\end{align*}
which completes the proof of Lemma \ref{lem: UH5.5}. 
\end{proof}
Now we are ready to prove the main theorem of the paper. 
\begin{proof}[Proof of Theorem \ref{thm: UH5.6}] 
By \cite[Theorem 7.6]{ArakiWoods68}, it is sufficient to show that $N$ is an ITPFI-factor. 
Let $\varphi$ be a $\mathbb{Q}$-stable normal faithful state on $N$, let $u_1,\dots, u_n\in \mathcal{U}(N)$, and let $\varepsilon>0$. Choose now $F$, $v_1,\dots, v_n\in \mathcal{U}(F)$ and $w\in \mathcal{U}(N)$ as in Lemma \ref{lem: UH5.5}. Put 
\begin{equation*}
w_k=w^*v_kw,\ \ \ \ \ k=1,\dots, n. 
\end{equation*}
Then $F_1:=w^*Fw$ is a finite-dimensional subfactor of $N$, $w_1,\dots, w_n\in \mathcal{U}(F_1)$ and 
\begin{equation*}
\|w_k-u_k\|_{\varphi}<\varepsilon,\ \ \ \ \ k=1,\dots, n. 
\end{equation*}
Since $F$ is $\sigma^{\varphi}$-invariant, $\varphi=\varphi|_F\otimes \varphi|_{F^{\text{c}}}$ holds. Hence if we put $\varphi_1=w^*\varphi w$, then 
\begin{equation*}
\varphi_1=\varphi_1|_{F_1}\otimes \varphi_1|_{F_1^{\text{c}}}.
\end{equation*}
Since the representing vector of $\varphi_1$ in $\mathcal{P}_N^{\natural}$ is $w^*\xi_{\varphi}w$, we have
\eqa{
\|\varphi-\varphi_1\|&\le \|\xi_{\varphi}-w^*\xi_{\varphi}w\| \|\xi_{\varphi}+w^*\xi_{\varphi}w\|\\
&\le 2\|w\xi_{\varphi}-\xi_{\varphi}w\|\\
&<2\varepsilon.
}
This shows that $\varphi$ satisfies the product condition in Proposition \ref{prop: CW}, and thus $N$ is an ITPFI factor. 
\end{proof}

\end{document}